\newtheorem{theorem}{Theorem}
\newtheorem*{theoremA}{Theorem A}
\newtheorem*{theoremB}{Theorem B}
\theoremstyle{definition}
\newtheorem*{definition}{Definition}
\theoremstyle{remark}
\newcommand{\aut}[1]{\operatorname{\mathrm{Aut}}{(#1)}}
\newcommand{\la}{\langle}
\newcommand{\ra}{\rangle}
\newcommand{\Dl}[2]{[\Delta_{#1#2}]}
\newcommand{\nb}[1]{\nabla_{#1}}
\newcommand{\as}[1]{\alpha^\star_{#1}}
\newcommand{\0}{\theta}
\newcommand{\af}{\alpha}
\newcommand{\bt}{\beta}
\newcommand{\orb}{\mathrm {Orb}}
\begin{document}
\noindent{\Large  
The algebraic and geometric classification  of nilpotent anticommutative algebras}\footnote{
The first part of this work is supported by the Russian Science Foundation under grant 18-71-10007. 
The second part of this work was supported by CNPq   404649/2018-1;
FAPESP 	18/09299-2, 18/15712-0;
by CMUP (UID/MAT/00144/2019), which is funded by FCT with national (MCTES) and European structural funds through the programs FEDER, under the partnership agreement PT2020;
by the Funda\c{c}\~ao para a Ci\^encia e a Tecnologia (Portuguese Foundation for Science and Technology) through the project PTDC/MAT-PUR/31174/2017.} 

   \

   {\bf   Ivan Kaygorodov$^{a,b}$, Mykola Khrypchenko$^{c,d}$ \& Samuel A.\ Lopes$^{e}$   \\

    \medskip
 
    \medskip
}

{\tiny

$^{a}$ CMCC, Universidade Federal do ABC, Santo Andr\'e, Brazil

$^{b}$ Siberian Federal University, Krasnoyarsk, Russia

$^{c}$ Departamento de Matem\'atica, Universidade Federal de Santa Catarina, Florian\'opolis, Brazil

$^{d}$ Centro de Matem\'atica e Aplica\c{c}\~oes, Faculdade de Ci\^{e}ncias e Tecnologia, Universidade Nova de Lisboa, Caparica, Portugal

$^{e}$ CMUP, Faculdade de Ci\^encias, Universidade do Porto, Rua do Campo Alegre 687, 4169-007 Porto, Portugal

\

\smallskip

  \medskip

   E-mail addresses:

\smallskip
    
    Ivan Kaygorodov (kaygorodov.ivan@gmail.com) 

 \smallskip
    Mykola Khrypchenko (nskhripchenko@gmail.com)  
    
 \smallskip    
      Samuel A.\ Lopes (slopes@fc.up.pt)

}

\ 

\ 

  \medskip

\ 

\noindent {\bf Abstract.}
{\it We give algebraic and geometric classifications of $6$-dimensional complex nilpotent anticommutative algebras. Specifically, we find that, up to isomorphism, there are $14$ one-parameter families of $6$-dimensional nilpotent anticommutative algebras, complemented by $130$ additional isomorphism classes. The corresponding geometric variety is irreducible and determined by the Zariski closure of a one-parameter family of algebras. In particular, there are no rigid $6$-dimensional complex nilpotent anticommutative algebras.}

\ 

\noindent {\bf Keywords}: {\it Nilpotent algebra, anticommutative algebra, Tortkara  algebra, Malcev algebra,
Lie algebra, algebraic classification, geometric classification, central extension, degeneration.}

\ 

\noindent {\bf MSC2010}: 17D10, 17D30.

\section*{Introduction}

Anticommutativity is a frequent phenomenon in the study of geometry and physics, as it arises naturally in the context of symmetry. Two of the most prominent examples of anticommutative algebras are the exterior algebra (which is also associative) and Lie algebras (which, in general, are non-associative). We view an $n$-dimensional complex algebra as a point in $\mathbb{C}^{n^3}$, given by its structure constants relative to some fixed basis. Since anticommutativity and nilpotency are closed conditions (with respect to the Zariski topology), the subset of $n$-dimensional nilpotent anticommutative algebras forms an affine variety. Moreover, the general linear group $\mathrm{GL}_n(\mathbb{C})$ acts on $\mathbb{C}^{n^3}$ by changing the basis, and the orbits parametrize the isomorphism classes. 

In this paper, our goal is to obtain a complete algebraic and geometric description of the variety of all $6$-dimensional nilpotent anticommutative algebras over the complex field. To do so, we first determine all such $6$-dimensional algebra structures, up to isomorphism (what we call the algebraic classification), and then proceed to determine the geometric properties of the corresponding variety, namely its dimension and description of the irreducible components (the geometric classification). 

Our main results are summarized below. 

\begin{theoremA}
Up to isomorphism, the variety of $6$-dimensional complex nilpotent anticommutative algebras has infinitely many isomorphism classes, described explicitly in Appendix~\ref{appb} in terms of $14$ one-parameter families and $130$ additional isomorphism classes.
\end{theoremA}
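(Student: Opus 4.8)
The plan is to apply the method of central extensions (in the Skjelbred--Sund form) to the variety of anticommutative algebras. The input is the classification of nilpotent anticommutative algebras of dimension at most $5$, which is known and which we recall before the main computation. For an anticommutative algebra $\A$ the relevant cohomology is especially transparent: since anticommutativity is the only defining identity, the space of $2$-cocycles $Z^2(\A,\mathbb C)$ is the entire space $\wedge^2\A^*$ of skew-symmetric bilinear forms on $\A$, while the coboundaries $B^2(\A,\mathbb C)$ are the forms $(x,y)\mapsto f(xy)$ with $f\in\A^*$; hence $\dim H^2(\A,\mathbb C)=\binom{\dim\A}{2}-\dim\A^2$. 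The structural fact underpinning the method, proved just as for the other varieties treated this way, is that every \emph{non-split} $n$-dimensional nilpotent anticommutative algebra $\tilde\A$ (that is, one not isomorphic to $\mathfrak n\oplus\mathbb C$ for any $\mathfrak n$) is isomorphic to a central extension $\A_\theta$ of $\A:=\tilde\A/\ann{\tilde\A}$ by $\mathbb C^{s}$, where $s=\dim\ann{\tilde\A}$ and $\theta\in Z^2(\A,\mathbb C^{s})$ has linearly independent components in $H^2(\A,\mathbb C)$ and common radical meeting $\ann{\A}$ trivially; moreover $\A_\theta\cong\A_\vartheta$ exactly when the corresponding $s$-dimensional subspaces of $H^2(\A,\mathbb C)$ lie in the same orbit under the action of $\aut{\A}$ on the Grassmannian of $s$-planes.

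This reduces the problem to a large but finite orbit computation. Writing $\tilde\A=\A_\theta$ as above, the base $\A$ is a nilpotent anticommutative algebra of dimension $6-s$ with $2\le 6-s\le 5$ (the extreme cases $s\in\{5,6\}$ forcing $\tilde\A$ to be the abelian algebra $\mathbb C^6$), so we run through the classification in dimensions $5,4,3,2$. For each base algebra $\A$ we: (i) compute $\ann{\A}$, a basis of $H^2(\A,\mathbb C)$, and $\aut{\A}$ together with its induced action on $H^2(\A,\mathbb C)$; (ii) determine, up to $\aut{\A}$, the $s$-dimensional subspaces of $H^2(\A,\mathbb C)$ whose common radical avoids $\ann{\A}$, extracting a canonical representative $\theta$ of each orbit; and (iii) write down the multiplication table of $\A_\theta$. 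Finally one must remove redundancies: the same $6$-dimensional algebra may surface from several base algebras, and such coincidences are detected by comparing discrete invariants (the dimensions of $\tilde\A^2$, $\ann{\tilde\A}$, and of the lower central series terms, the nilpotency index, and similar data) and then either exhibiting an explicit isomorphism or ruling one out. Adjoining the abelian algebra and the decomposable algebras $\mathfrak n\oplus\mathbb C^{j}$, with $\mathfrak n$ an indecomposable nilpotent anticommutative algebra of dimension $<6$, completes the list.

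The outcome of this bookkeeping is precisely the list in Appendix~\ref{appb}. For most base algebras the $\aut{\A}$-orbits on the relevant Grassmannian are finite, contributing to the $130$ isolated isomorphism classes; for a handful of base algebras the action leaves a one-dimensional family of orbits, and these produce the $14$ one-parameter families. I expect the main obstacle to be step (ii): the automorphism groups of the $5$- and $4$-dimensional base algebras are typically non-reductive with complicated induced actions on $H^2(\A,\mathbb C)$, so that bringing the subspaces to canonical form --- and, in the parametric cases, determining the (few, but non-trivial) identifications among members of a family, so that the surviving parameters are genuinely pairwise non-isomorphic --- is where essentially all of the effort lies. A secondary difficulty is the cross-base comparison in the last step, needed to certify that the final list is neither redundant nor incomplete.
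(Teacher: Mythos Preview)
Your approach is essentially the paper's: Skjelbred--Sund central extensions, with the orbit analysis on Grassmannians of $H^2$ under $\aut{\A}$. Two points are worth flagging. First, the paper does not run the full computation from scratch; it works \emph{modulo} the already completed classification of $6$-dimensional nilpotent Tortkara (and Malcev) algebras. Concretely, for each base $\A$ it computes $H^2_{\mathbb T}(\A)\subseteq H^2_{\mathcal A}(\A)$ and only seeks orbit representatives with a component outside $H^2_{\mathbb T}(\A)$, since any extension by a cocycle in $H^2_{\mathbb T}$ is Tortkara and hence already on the known list. This shortcut, together with the observation that every non-split $6$-dimensional extension with $\dim\ann{\tilde\A}\ge 3$ is $2$-step nilpotent (hence Tortkara), reduces the work to bases of dimension $4$ and $5$ only; your plan to treat bases of dimension $2$ and $3$ is not wrong, but those cases contribute only algebras already captured by the Tortkara/Malcev lists.

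Second, your final ``cross-base comparison'' step is unnecessary, and reflects a small misconception. Once you impose the non-split condition $\theta^\perp\cap\ann{\A}=0$ (equivalently, $\ann{\A_\theta}={\bf V}$), the base is recovered canonically as $\A\cong\A_\theta/\ann{\A_\theta}$; distinct bases therefore yield non-isomorphic non-split extensions, and the paper performs no such cross-check. The only global assembly needed is to adjoin the split algebras $\A'\oplus\mathbb C^j$ with $\A'$ non-split of dimension $<6$, and these are automatically disjoint from the non-split list.
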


From the geometric point of view, in many cases the irreducible components of the variety are determined by the rigid algebras, i.e., algebras whose orbit closure is an irreducible component. It is worth mentioning that this is not always the case and already in \cite{fF68} Flanigan had shown that the variety of $3$-dimensional nilpotent associative algebras has an irreducible component which does not contain any rigid algebras --- it is instead defined by the closure of a union of a one-parameter family of algebras. Here, we encounter a similar situation. Informally, although Theorem~B shows that there is no single {\it generic} $6$-dimensional nilpotent anticommutative algebra, one can see the family ${\mathbb A}_{82}(\af)$ given below as the {\it generic family} in the variety. 

\begin{theoremB}
The variety of $6$-dimensional complex nilpotent anticommutative algebras is irreducible of dimension $34$. It contains no rigid algebras and can be described as the closure of the union of $\mathrm{GL}_6(\mathbb{C})$-orbits of the following one-parameter family of algebras ($\alpha\in\mathbb{C}$):
\begin{equation*}
{\mathbb A}_{82}(\af) :\qquad   
e_1e_2=e_3, \quad e_1e_3=e_4, \quad e_2e_5=\af e_6, \quad e_3e_4=e_5, \quad e_3e_5=e_6, \quad e_4e_5=e_6 .
\end{equation*} 
\end{theoremB}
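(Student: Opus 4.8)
The plan is to show that the variety $\Nl$ of $6$-dimensional complex nilpotent anticommutative algebras equals the Zariski closure $\mathcal{C}$ of $\bigcup_{\af\in\Co}\orb(\mathbb{A}_{82}(\af))$, and then to read off irreducibility, dimension and the absence of rigid algebras from this description. Since $\mathcal{C}$ is the closure of the image of the irreducible variety $\Co\times\mathrm{GL}_6(\Co)$ under the morphism $(\af,g)\mapsto g\cdot\mathbb{A}_{82}(\af)$, it is automatically irreducible; and as $\mathcal{C}\subseteq\Nl$, everything reduces to proving the reverse inclusion, i.e.\ that every algebra in the classification of Theorem~A (Appendix~\ref{appb}) lies in the orbit closure of $\mathbb{A}_{82}(\af)$ for a suitable value of the parameter. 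In particular, no non-degeneration (lower-bound) arguments are needed here: we only have to exhibit degenerations in one direction.

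The core step is therefore to establish $\mathbb{A}_{82}(\af)\to B$ for every $B$ in the list. Rather than treating all $130$ individual algebras and $14$ families one by one, I would isolate a short list of ``intermediate'' generic families, prove directly --- by writing down parametrized bases --- that $\mathbb{A}_{82}(\af)$ degenerates to each of them, and then check that these in turn degenerate to everything lower in the hierarchy; combined with the degenerations already recorded among the smaller families, transitivity of degeneration yields the full claim. Concretely, for each target $B$ one produces invertible matrices $g_t$ with entries Laurent polynomials in a parameter $t$, together with a value $\af(t)$, such that $g_t\cdot\mathbb{A}_{82}(\af(t))\to B$ as $t\to 0$, taking care that the limiting value of the parameter is the one demanded by $B$. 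This is a lengthy but essentially mechanical verification, best organized via the degeneration graph.

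Granting $\Nl=\mathcal{C}$, irreducibility is immediate. For the dimension I would compute $\der{\mathbb{A}_{82}(\af)}$ and confirm that $\dim\der{\mathbb{A}_{82}(\af)}=3$ for generic $\af$ (setting aside the finitely many special values where it may jump, which do not affect the generic count). Since the algebras $\mathbb{A}_{82}(\af)$ are pairwise non-isomorphic for generic $\af$ --- checked with the usual isomorphism invariants --- the morphism above has generic fibre of dimension $\dim\aut{\mathbb{A}_{82}(\af)}=\dim\der{\mathbb{A}_{82}(\af)}=3$, so $\dim\Nl=\dim\mathcal{C}=(1+36)-3=34$; equivalently, $\dim\orb(\mathbb{A}_{82}(\af))=33$ and the $1$-parameter family contributes exactly one more dimension. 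Finally, if $A\in\Nl$ were rigid then $\orb(A)$ would be open, hence dense, in $\Nl$; as $\bigcup_{\af}\orb(\mathbb{A}_{82}(\af))$ is also dense and $\orb(A)$ is open, the two sets intersect, forcing $A\cong\mathbb{A}_{82}(\af_0)$ for some $\af_0$; but then $\dim\orb(A)=\dim\orb(\mathbb{A}_{82}(\af_0))\le 33<34$ contradicts the density of $\orb(A)$. Hence $\Nl$ contains no rigid algebras.

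The main obstacle is the degeneration step. The difficulty is combinatorial rather than conceptual: realizing all $130$ algebras and $14$ families as degenerations of $\mathbb{A}_{82}(\af)$ requires a judicious choice of intermediate generic families so that only a small number of parametrized basis changes must be written out by hand, the rest following by transitivity, and it requires matching the limiting value of $\af$ to each target correctly. Performing the computation of $\der{\mathbb{A}_{82}(\af)}$ uniformly in $\af$, and pinning down precisely for which $\af$ the algebras $\mathbb{A}_{82}(\af)$ are isomorphic, are the remaining points that demand care.
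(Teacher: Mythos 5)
Your proposal is correct and follows essentially the same route as the paper: exhibit explicit parametric bases and parametric indices giving ${\mathbb A}_{82}(\af)\to B$ for every $B$ in the classification (using transitivity through a few dominant algebras to avoid treating each case from scratch), then obtain irreducibility from the irreducibility of $\mathbb{C}\times\mathrm{GL}_6(\mathbb{C})$, the dimension $36-\dim\der{{\mathbb A}_{82}(\af)}+1=34$ from $\dim\der{{\mathbb A}_{82}(\af)}=3$, and the absence of rigid algebras by the open-orbit/dimension argument. The only organizational difference is that the paper's ``intermediate generic families'' are already known: it degenerates ${\mathbb A}_{82}(\af)$ to the three rigid Tortkara algebras ${\mathbb T}_{10},{\mathbb T}_{17},{\mathbb T}_{19}$ and invokes the prior geometric classification of $6$-dimensional nilpotent Tortkara algebras to cover all ${\mathbb M}_j$ and ${\mathbb T}_j$ at once, handling the remaining ${\mathbb A}_j$ by direct degenerations listed in Appendix~\ref{app:A}.
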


Let us mention a few similar results. In \cite{maz80}, Mazzola showed that the variety of $n$-dimensional nilpotent commutative associative algebras is irreducible for $n\leq 6$, but not so for $n=7$. For nilpotent Lie algebras, a similar phenomenon takes place: that variety is irreducible for algebras of dimension up to $6$, by \cite{GRH} and \cite{S90}, but becomes reducible in dimensions $7$ and $8$, by \cite{GAB92}.

\medskip

\paragraph{\bf Motivation and contextualization} Given algebras ${\bf A}$ and ${\bf B}$ in the same variety, we write ${\bf A}\to {\bf B}$ and say that ${\bf A}$ {\it degenerates} to ${\bf B}$, or that ${\bf A}$ is a {\it deformation} of ${\bf B}$, if ${\bf B}$ is in the Zariski closure of the orbit of ${\bf A}$ (under the aforementioned base-change action of the general linear group). The study of degenerations of algebras is very rich and closely related to deformation theory, in the sense of Gerstenhaber \cite{mG64} (see also \cite{fF68}). It offers an insightful geometric perspective on the subject and has been the object of a lot of research.
In particular, there are many results concerning degenerations of algebras of small dimensions in a  variety defined by a set of identities.
One of the main problems in this direction is a description of the irreducible components of the variety. In the case of finitely-many orbits (i.e., isomorphism classes), the irreducible components are determined by the rigid algebras --- algebras whose orbit closure is an irreducible component of the variety under consideration. For this reason, they are seen of as {\it generic algebras} in that variety.

For example, rigid algebras have been classified in the following varieties: 
$4$-dimensional Leibniz algebras \cite{ikv17}, 
$4$-dimensional nilpotent Novikov algebras \cite{kkk18},
$4$-dimensional nilpotent bicommutative algebras \cite{kpv19},
$6$-dimensional nilpotent binary Lie algebras \cite{ack},
$6$-dimensional nilpotent Tortkara  algebras \cite{gkk19}.
There are fewer works in which the full information about degenerations has been given for some variety of algebras.
This problem was solved 
for $2$-dimensional pre-Lie algebras \cite{bb09},  
for $2$-dimensional terminal algebras \cite{cfk19},
for $3$-dimensional Novikov algebras \cite{bb14},  
for $3$-dimensional Jordan algebras \cite{gkp},  
for $3$-dimensional Jordan superalgebras \cite{maria},
for $3$-dimensional Leibniz algebras \cite{ikv18}, 
for $3$-dimensional anticommutative algebras \cite{ikv18},
for $3$-dimensional nilpotent algebras \cite{fkkv},
for $4$-dimensional Lie algebras \cite{BC99},
for $4$-dimensional Lie superalgebras \cite{aleis},
for $4$-dimensional Zinbiel  algebras \cite{kppv},
for $4$-dimensional nilpotent Leibniz algebras \cite{kppv},
for $4$-dimensional nilpotent commutative algebras \cite{fkkv},
for $5$-dimensional nilpotent Tortkara algebras \cite{gkks},
for $5$-dimensional nilpotent anticommutative algebras \cite{fkkv},
for $6$-dimensional nilpotent Lie algebras \cite{S90,GRH}, 
for $6$-dimensional nilpotent Malcev algebras \cite{kpv}, 
for $2$-step nilpotent $7$-dimensional Lie algebras \cite{ale2}, 
and for all $2$-dimensional algebras \cite{kv16}.
There are many results related to the algebraic and geometric 
classification
of low-dimensional algebras in the varieties of Jordan, Lie, Leibniz and 
Zinbiel algebras;
for algebraic classifications  see, for example, \cite{ack, cfk19, degr3, usefi1, degr2, degr1, gkks, gkk,  ikm19,   ikv18, hac18, kkk18, kpv19, kv16};
for geometric classifications and descriptions of degenerations see, for example, 
\cite{ack, ale, ikm19, ale2, aleis, maria, bb14, BC99, cfk19, gkks, gkk19, gkp,  GRH, GRH2, ikv17, ikv18, kkk18, kpv19, kppv, kpv, kv16,kv17, S90,  gorb93, gorb98, khud15, khud13, maz79, maz80, klpp}.

\medskip 

\paragraph{\bf Methods}
Our first and most comprehensive step is the classification, up to isomorphism, of all $6$-dimensional nilpotent anticommutative algebras. 
This will be achieved via the method of central extensions stemming from \cite{ss78}, \cite{ha16} and \cite{hac16}. Every nilpotent algebra can be constructed as a central extension of an algebra $\mathbf{A}$ of smaller dimension, and the isomorphism classes of the extensions are controlled by a suitable action of the automorphism group of $\mathbf{A}$ on the Grassmannian space based on the second cohomology of $\mathbf{A}$ with trivial coefficients.

Skjelbred and Sund \cite{ss78} used central extensions of Lie algebras to classify nilpotent Lie algebras. In later works, using the same method, all non-Lie central extensions of $4$-dimensional Malcev algebras \cite{hac16},
all non-associative central extensions of $3$-dimensional Jordan algebras \cite{ha17},
all anticommutative central extensions of $3$-dimensional anticommutative algebras \cite{cfk182} and all central extensions of $2$-dimensional algebras \cite{cfk18} were described, to mention but a few. Related work on central extensions can be found, for example, in \cite{zusmanovich,kkl18,omirov}.

The class of anticommutative algebras includes all Malcev (in particular, all Lie) and all Tortkara algebras. Concerning the latter, the algebraic and geometric classifications of $6$-dimensional nilpotent Tortkara algebras have been completed in \cite{gkk} and \cite{gkk19}, respectively. We will rely on this work; in particular, we will be able to proceed in our algebraic and geometric classifications modulo the class of Tortkara algebras. We will also rely on the classification of 
$4$-dimensional nilpotent anticommutative algebras in \cite{cfk182},
$5$-dimensional nilpotent anticommutative algebras in \cite{fkkv},
$6$-dimensional nilpotent Malcev algebras in \cite{hac18} and
$6$-dimensional nilpotent Tortkara algebras in \cite{gkk,gkk19,gkks}.

\medskip 

\paragraph{\bf Organization of the paper}
We will work over the base field $\mathbb{C}$ of complex numbers. In Section~\ref{S:alg}, we use the action of automorphism groups of algebras of smaller dimension on central extensions to determine the distinct isomorphism classes of $6$-dimensional nilpotent anticommutative algebras, yielding Theorem~A. Then, in Section~\ref{S:geo}, we obtain the corresponding geometric description. Our main result, Theorem B, says that the variety defined by these algebras is irreducible and determined by a one-parameter family of pairwise non-isomorphic algebras.

\section{The algebraic classification of $6$-dimensional  nilpotent anticommutative algebras}\label{S:alg}

\subsection{The algebraic classification of nilpotent anticommutative algebras}\label{algcl}
Let ${\bf A}$ be an anticommutative  algebra, ${\bf V}$ a vector space and ${\rm Z}^{2}\left( {\bf A},{\bf V}\right)\cong {\rm Hom}(\wedge^2{\bf A},\bf V)$ the space of skew-symmetric  bilinear maps $\theta :{\bf A}\times 
{\bf A}\longrightarrow {\bf V}.$ For $f\in{\rm Hom}({\bf A},{\bf V})$, we define $\delta f\in {\rm Z}^{2}\left( {\bf A},{\bf V}\right)$ by the equality $\delta f\left( x,y\right) =f(xy)$ and
set ${\rm B}^{2}\left( {\bf A},{\bf V}\right) =\left\{\delta f \mid f\in {\rm Hom}\left( {\bf A},{\bf V}\right) \right\} $. One
can easily check that ${\rm B}^{2}({\bf A},{\bf V})$ is a linear subspace of ${\rm Z}^{2}\left( {\bf A},{\bf V}\right)$.
Let us define $\rm {H}_{\mathcal A}^{2}\left( {\bf A},{\bf V}\right) $ as the quotient space ${\rm Z}^{2}\left( {\bf A},{\bf V}\right) \big/{\rm B}^{2}\left( {\bf A},{\bf V}\right)$.
The equivalence class of $\theta\in {\rm Z}^{2}\left( {\bf A},{\bf V}\right)$ in $\rm {H}_{\mathcal A}^{2}\left( {\bf A},{\bf V}\right)$ is denoted by $\left[ \theta \right]$. As usual, we call the elements of ${\rm Z}^{2}\left( {\bf A},{\bf V}\right)$ cocycles, those of ${\rm B}^{2}({\bf A},{\bf V})$ coboundaries, and $\rm {H}_{\mathcal A}^{2}\left( {\bf A},{\bf V}\right)$ is the corresponding second cohomology space.

Suppose now that $\dim{\bf A}=m<n$ and $\dim{\bf V}=n-m$. For any skew-symmetric
bilinear map $\theta :{\bf A}\times {\bf A}\longrightarrow {\bf V%
}$, one can define on the space ${\bf A}_{\theta }:={\bf A}\oplus 
{\bf V}$ the anticommutative bilinear product  $\left[ -,-\right] _{%
{\bf A}_{\theta }}$ by the equality $\left[ x+x^{\prime },y+y^{\prime }\right] _{%
{\bf A}_{\theta }}= xy  +\theta \left( x,y\right) $ for  
$x,y\in {\bf A},x^{\prime },y^{\prime }\in {\bf V}$. The algebra ${\bf A}_{\theta }$ is called an $(n-m)$-{\it %
dimensional central extension} of ${\bf A}$ by ${\bf V}$.
It is clear that ${\bf A}_{\theta }$ is nilpotent if and only if ${\bf A}$ is nilpotent and also that ${\bf A}_{\theta }$ is anticommutative if
and only if ${\bf A}$ is anticommutative and $\theta$ is skew-symmetric.

For a skew-symmetric bilinear form $\theta :{\bf A}\times {\bf A}\longrightarrow {\bf V}$, the space $\theta ^{\bot }=\left\{ x\in {\bf A}\mid \theta \left(
{\bf A},x\right) =0\right\} $ is called the {\it annihilator} of $\theta$.
For an anticommutative algebra ${\bf A}$, the ideal 
${\rm Ann}\left( {\bf A}\right) =\left\{ x\in {\bf A}\mid {\bf A}x =0\right\}$ is called the {\it annihilator} of ${\bf A}$.
One has
\begin{equation*}
{\rm Ann}\left( {\bf A}_{\theta }\right) =\left( \theta ^{\bot }\cap {\rm Ann}\left( 
{\bf A}\right) \right) \oplus {\bf V}.
\end{equation*}
Any $n$-dimensional  anticommutative algebra with non-trivial annihilator can be represented in
the form ${\bf A}_{\theta }$ for some $m$-dimensional  anticommutative algebra ${\bf A}$, some $(n-m)$-dimensional vector space ${\bf V}$ and some $\theta \in {\rm Z}^{2}\left( {\bf A},{\bf V}\right)$, where $m<n$ (see \cite[Lemma 5]{hac16}).
Moreover, there is a unique such representation with $m=n-\dim{\rm Ann}({\bf A})$. Note that the latter equality is equivalent to the condition  $\theta ^{\bot }\cap {\rm Ann}\left( 
{\bf A}\right)=0$. 

Let us pick some $\phi\in {\rm Aut}\left( {\bf A}\right)$, where ${\rm Aut}\left( {\bf A}\right)$ is the automorphism group of  ${\bf A}$.
For $\theta\in {\rm Z}^{2}\left( {\bf A},{\bf V}\right)$, let us define $(\phi \theta) \left( x,y\right) =\theta \left( \phi \left( x\right)
,\phi \left( y\right) \right) $. Then we get an action of ${\rm Aut}\left( {\bf A}\right) $ on ${\rm Z}^{2}\left( {\bf A},{\bf V}\right)$ which induces an action of that group on $\rm {H}_{\mathcal{A}}^{2}\left( {\bf A},{\bf V}\right)$.

\begin{definition}
Let ${\bf A}$ be an algebra and $I$ be a subspace of ${\rm Ann}({\bf A})$. If ${\bf A}={\bf A}_0 \oplus I$ for some subalgebra ${\bf A}_0$ of ${\bf A}$
then $I$ is called an {\it annihilator component} of ${\bf A}$. We say that an algebra is  {\it split} if it has a nontrivial annihilator component.
\end{definition}

For a linear space $\bf U$, the {\it Grassmannian} $G_{s}\left( {\bf U}\right) $ is
the set of all $s$-dimensional linear subspaces of ${\bf U}$. For any $0\leq s\leq \dim \rm {H}_{\mathcal A}^{2}\left( {\bf A},{\mathbb C}\right)$, the action of ${\rm Aut}\left( {\bf A}\right)$ on $\rm {H}_{\mathcal A}^{2}\left( {\bf A},\mathbb{C}\right)$ induces 
an action of that group on $G_{s}\left( \rm {H}_{\mathcal A}^{2}\left( {\bf A},\mathbb{C}\right) \right)$.
Let us define
$$
{\bf T}_{s}\left( {\bf A}\right) =\left\{ {\bf W}\in G_{s}\left( \rm {H}_{\mathcal A}^{2}\left( {\bf A},\mathbb{C}\right) \right)\left|\underset{[\theta]\in W}{\cap }\theta^{\bot }\cap {\rm Ann}\left( {\bf A}\right) =0\right.\right\}.
$$
Note that, by \cite[Lemmas 15 and 16]{hac16}, ${\bf T}_{s}\left( {\bf A}\right)$ is well defined and stable under the action of ${\rm Aut}\left( {\bf A}\right) $.

Let us fix a basis $e_{1},\ldots
,e_{s} $ of ${\bf V}$, and $\theta \in {\rm Z}^{2}\left( {\bf A},{\bf V}\right) $. Then there are unique $\theta _{i}\in {\rm Z}^{2}\left( {\bf A},\mathbb{C}\right)$ ($1\le i\le s$) such that $\theta \left( x,y\right) =\underset{i=1}{\overset{s}{%
\sum }}\theta _{i}\left( x,y\right) e_{i}$ for all $x,y\in{\bf A}$. Note that $\theta ^{\bot
}=\theta^{\bot} _{1}\cap \theta^{\bot} _{2}\cap\cdots \cap \theta^{\bot} _{s}$ in this case.
If   $\theta ^{\bot
}\cap {\rm Ann}\left( {\bf A}\right) =0$, then by \cite[Lemma 13]{hac16} the algebra ${\bf A}_{\theta }$ is split if and only if $\left[ \theta _{1}\right] ,\left[
\theta _{2}\right] ,\ldots ,\left[ \theta _{s}\right] $ are linearly
dependent in $\rm {H}_{\mathcal A}^{2}\left( {\bf A},\mathbb{C}\right)$. Thus, if $\theta ^{\bot
}\cap {\rm Ann}\left( {\bf A}\right) =0$ and ${\bf A}_{\theta }$ is non-split, then $\left\langle \left[ \theta _{1}\right] , \ldots,%
\left[ \theta _{s}\right] \right\rangle$ is an element of ${\bf T}_{s}\left( {\bf A}\right)$.
Now, if $\vartheta\in {\rm Z}^{2}\left( {\bf A},\bf{V}\right)$ is such that $\vartheta ^{\bot
}\cap {\rm Ann}\left( {\bf A}\right) =0$ and ${\bf A}_{\vartheta }$ is non-split, then by \cite[Lemma 17]{hac16} one has ${\bf A}_{\vartheta }\cong{\bf A}_{\theta }$ if and only if
$\left\langle \left[ \theta _{1}\right] ,\left[ \theta _{2}%
\right] ,\ldots ,\left[ \theta _{s}\right] \right\rangle,
\left\langle \left[ \vartheta _{1}\right] ,\left[ \vartheta _{2}\right] ,\ldots,%
\left[ \vartheta _{s}\right] \right\rangle\in {\bf T}_{s}\left( {\bf A}\right)$ belong to the same orbit under the action of ${\rm Aut}\left( {\bf A}\right) $, where $%
\vartheta \left( x,y\right) =\underset{i=1}{\overset{s}{\sum }}\vartheta
_{i}\left( x,y\right) e_{i}$.

Hence, there is a one-to-one correspondence between the set of $
{\rm Aut}\left({\bf A}\right)$-orbits on ${\bf T}_{s}\left( {\bf A}
\right) $ and the set of isomorphism classes of non-split central extensions of $\bf{A}$ by $\bf{V}$ with $s$-dimensional annihilator.
Consequently to construct all non-split $n$-dimensional central extensions with $s$-dimensional annihilator
of a given $(n-s)$-dimensional algebra ${\bf A}$ one has to describe ${\bf T}_{s}({\bf A})$, ${\rm Aut}({\bf A})$ and the action of ${\rm Aut}({\bf A})$ on ${\bf T}_{s}({\bf A})$ and then
for each orbit under the action of ${\rm Aut}({\bf A})$ on ${\bf T}_{s}({\bf A})$ pick a representative and construct the algebra corresponding to it.

We will use the following auxiliary notation during the construction of central extensions.
Let ${\bf A}$ be an anticommutative  algebra with basis $e_{1},e_{2},\ldots,e_{n}$. For $1\leq i\neq j\leq n$, 
$\Delta_{ij}:{\bf A}\times {\bf A}\longrightarrow \mathbb{C}$ denotes the skew-symmetric bilinear form  defined by the equalities $\Delta _{ij}\left( e_{i},e_{j}\right)=-\Delta _{ij}\left( e_{j},e_{i}\right)=1$
and $\Delta _{ij}\left( e_{l},e_{m}\right) =0$ for $\left\{ l,m\right\} \neq \left\{ i,j\right\}$. In this case, the $\Delta_{ij}$ with $1\leq i < j\leq n $ form a basis of the space ${\rm Z}^{2}\left( {\bf A},{\mathbb C}\right)$ of skew-symmetric bilinear forms on $\bf{A}$.

We will often use the symbols $\nabla_i$ to represent a basis of $\rm {H}_{\mathcal{A}}^{2}\left( {\bf A}, \mathbb C\right)$. Given an element $\0=\sum_{i}\af_i\nb i\in {\rm H}^2_{{\mathcal A}}({\bf A}, \mathbb C)$, with $\af_i\in\mathbb{C}$, and an automorphism $\phi\in {\rm Aut}\left( {\bf A}\right)$, we will write $\phi \theta=\sum_{i}\af_i^{*}\nb i$. The coefficients $\af_i^{*}$ are easy to compute: identifying $\phi$ and $\theta$ with their corresponding matrix representations with respect to some fixed basis of ${\bf A}$, we deduce that the matrix representation of $\phi \theta$ in that basis of ${\bf A}$ is just $\phi^{T}\theta\phi$, from which the $\af_i^{*}$ are readily determined. Below, we omit the details of those computations. 

The description of the multiplication of a given $n$-dimensional anticommutative  algebra ${\bf A}$ is given in terms of the distinguished basis $e_{1},e_{2},\ldots,e_{n}$. 
We omit the products of basis elements which either are zero, or can be deduced from the anti\-com\-mu\-ta\-ti\-vi\-ty of the algebra. Unless otherwise stated, all matrices involving ${\bf A}$ are taken with respect to this distinguished basis and the automorphism group of ${\bf A}$ is described in terms of the matrices of its elements. If no additional conditions are mentioned, the variables in these descriptions may take arbitrary complex values, subject only to the restriction that the corresponding determinant is nonzero. The details of the computations of the automorphism groups are omitted.

We will make use of previous work on the algebraic and geometric classification of certain classes of anticommutative algebras. An important such class is that of Tortkara algebras. These are anticommutative algebras satisfying the identity
\begin{equation*}
(ab)(cb) = J (a, b, c)b, \quad \mbox{where $J (a, b, c) = (ab)c + (bc)a + (ca)b$.} 
\end{equation*}
The algebraic classification of all $6$-dimensional nilpotent Tortkara algebras was completed in \cite{gkk} and their geometric classification was obtained in \cite{gkk19}. Therefore, our classification of anticommutative algebras will be carried out modulo the class of Tortkara algebras. Malcev algebras form another important class of anticommutative algebras which includes all Lie algebras. We will thence use the following notation:

$$\begin{array}{lll}
{\bf A}_{j}& \mbox{the }j\mbox{th }4\mbox{-dimensional nilpotent anticommutative algebra}, \\
{\mathcal A}_{j}& \mbox{the }j\mbox{th }5\mbox{-dimensional nilpotent anticommutative algebra}, \\
{\mathbb A}_{j}& \mbox{the }j\mbox{th }6\mbox{-dimensional nilpotent anticommutative non-Tortkara algebra}, \\
{\mathbb M}_{j}& \mbox{the }j\mbox{th }6\mbox{-dimensional nilpotent Malcev-Tortkara   algebra}, \\
{\mathbb T}_{j}& \mbox{the }j\mbox{th }6\mbox{-dimensional nilpotent Tortkara non-Malcev  algebra}. 
\end{array}$$

The subspace ${\rm Z}_{\mathbb T}^{2}\left( {\bf A},{\bf V}\right)$ of cocycles $\theta\in{\rm Z}^{2}\left( {\bf A},{\bf V}\right)$ such that ${\bf A}_{\theta }$ is Tortkara determines the second cohomology space $\rm {H}_{\mathbb T}^{2}\left( {\bf A},{\bf V}\right)$, which we view as a subspace of $\rm {H}_{\mathcal A}^{2}\left( {\bf A},{\bf V}\right)$. In case ${\bf V}=\mathbb C$ we simply write these spaces as $\rm {H}_{\mathbb T}^{2}\left( {\bf A}\right)$ and $\rm {H}_{\mathcal A}^{2}\left( {\bf A}\right)$.

\subsection{The algebraic classification of $6$-dimensional split nilpotent anticommutative algebras}
Thanks to \cite{fkkv}, we have only one (non-Tortkara) algebra of this type:

$$\begin{array}{lclll l}
{\mathbb A}_{00}:  & e_1e_2=e_3,& e_1e_3=e_4,& e_3e_4=e_5. 
\end{array}$$

\medskip

It is easy to see that any $6$-dimensional nilpotent anticommutative algebra ${\bf A}$ such that $\dim {\rm Ann}\left( {\bf A}\right)\geq 3$ is necessarily split. Thus, it remains to consider non-split algebras having an annihilator of dimension $1$ or $2$.

\subsection{The algebraic classification of $6$-dimensional non-split nilpotent anticommutative algebras with $2$-dimensional annihilator}

%

%


Thanks to \cite{cfk182}, we have the classification of all nontrivial $4$-dimensional nilpotent anticommutative algebras.
\begin{equation*}
\begin{array}{|l|l|l|l|} 
\hline
\mbox{$\bf A$}  & \mbox{ Multiplication table} 
& \mbox{${\rm H}_{\mathbb T}^2({\bf A})$}
& \mbox{${\rm H}_{\mathcal A}^2({\bf A})$}  \\ 

\hline
\hline

{\bf A}_{01}& e_1e_2 = e_3 &
\la[\Delta_{13}], [\Delta_{14}], [\Delta_{23}], [\Delta_{24}], [\Delta_{34}]  \ra &
\mbox{${\rm H}_{\mathbb T}^2({\bf A}_{01})$}
  \\
\hline
{\bf A}_{02}& e_1e_2=e_3, e_1e_3=e_4 &
\la[\Delta_{14}], [\Delta_{23}], [\Delta_{24}] \ra &
\mbox{${\rm H}_{\mathbb T}^2({\bf A}_{02})$} \oplus 
\la [\Delta_{34}] \ra \\

\hline

\end{array}
\end{equation*}

In view of \cite{gkks},
all anticommutative central extensions of ${\bf A}_{01}$ and of the $4$-dimensional trivial algebra are Tortkara algebras, so we need only consider central extensions of ${\bf A}_{02}$.

\subsubsection{$2$-dimensional central extensions of ${\bf A}_{02}$}
	Let us use the notation
	$$ 
	\begin{array}{rclrclrclrclrcl}
	\nb 1& = &\Dl 14, & \nb 2& = &\Dl 23, &\nb 3& = &\Dl 24, &\nb 4& = &\Dl 34.    
	\end{array}
	$$
	Take $\0=\sum_{i=1}^4\af_i\nb i\in {\rm H}^2_{{\mathcal A}}({\bf A}_{02})$.
	If 
$$
	\phi=
\begin{pmatrix} 
x& 0 & 0 & 0\\
y & z & 0 & 0 \\
u & v & xz  & 0\\
h & g & xv & x^2z
\end{pmatrix}
\in\aut{{\bf A}_{02}},
	$$
	then
$\phi\theta=\sum_{i=1}^4 \alpha_i^{*} \nabla_i$,
where
$$\begin{array}{lcl}
\alpha_1^{*}&=&x^2z(\alpha_1x+\alpha_3y+\alpha_4u), \\
\alpha_2^{*}&=&xz(\alpha_2z-\alpha_4g)+vx(\alpha_3z+\alpha_4v), \\
\alpha_3^{*}&=&x^2z(\alpha_3z+\alpha_4v),\\
\alpha_4^{*}&=&\alpha_4x^3z^2.
\end{array}$$

Consider the vector space generated by the following two cocycles 
$$\begin{array}{lcl}
\theta_1&=&\alpha_1  \nabla_1+\alpha_2  \nabla_2+\alpha_3 \nabla_3+ \nabla_4,\\
\theta_2&=&\beta_1 \nabla_1+\beta_2 \nabla_2+\beta_3 \nabla_3.\\
\end{array}$$
Choosing $u=-(\alpha_1x+\alpha_3y), 
g={\alpha_2 z}, v=-\alpha_3z$, we get $\phi\langle \theta_1\rangle=\langle\nb 4\rangle$. Furthermore, 
\begin{enumerate}
    \item if $\beta_3\neq0,$ 
    then we can suppose that $\beta_2\beta_3^{-1}=\alpha_3$, which gives $\bt_2^*=0$. Choosing now  $y=-\beta_1\beta_3^{-1}x$, we get the representative $\langle \nabla_3, \nabla_4 \rangle;$
    \item if $\beta_3=0$ and $\beta_2\neq 0, \beta_1 \neq 0,$ then choosing $z=\frac{\beta_1x^2}{\beta_2}$ we get the representative $\langle \nabla_1+\nabla_2, \nabla_4 \rangle;$
    \item if $\beta_3=0$ and $\beta_2\neq 0, \beta_1 = 0,$ then we get the representative $\langle \nabla_2, \nabla_4 \rangle;$
    \item if $\beta_3=0$ and $\beta_2 = 0, \beta_1 \neq 0,$ then  we get the representative $\langle \nabla_1, \nabla_4 \rangle.$
\end{enumerate}
It is easy to see that the $4$ subspaces above are elements of ${\bf T}_{2}({\bf A}_{02})$ and that they determine distinct orbits under ${\rm Aut}({\bf A}_{02})$. Thus, we have the following algebras:
\begin{longtable}{lllll lll}
${\mathbb A}_{01}$ &:& $e_1e_2=e_3$,& $e_1e_3=e_4$, & $e_2e_4=e_5$, & $e_3e_4=e_6$;\\
${\mathbb A}_{02}$ &:& $e_1e_2=e_3$,& $e_1e_3=e_4$, & $e_1e_4=e_5$,& $e_2e_3=e_5$, & $e_3e_4=e_6$;\\
${\mathbb A}_{03}$ &:& $e_1e_2=e_3$,& $e_1e_3=e_4$, &  $e_2e_3=e_5$, & $e_3e_4=e_6$;\\
${\mathbb A}_{04}$ &:& $e_1e_2=e_3$,& $e_1e_3=e_4$, & $e_1e_4=e_5$,&  $e_3e_4=e_6$.
\end{longtable}

\subsection{The algebraic classification of $6$-dimensional non-split nilpotent anticommutative algebras with $1$-dimensional annihilator}

Thanks to \cite{fkkv} we have the algebraic classification of all nontrivial $5$-dimensional nilpotent anticommutative algebras.

\begin{longtable}{|l|l|l|l|} 
\hline
$\bf A$  & multiplication table & ${\rm H_{\mathbb T}^2}({\bf A})$ & ${\rm H_{\mathcal A}^2}({\bf A})$ \\ 
\hline
\hline
${\mathcal A}_{01}$& 
$\begin{array}{l}
e_1e_2 = e_3 
\end{array}$&   

 $\Big\langle
\begin{array}{l} 
 [\Delta_{13}] ,  [\Delta_{14}] , [\Delta_{15}] ,  [\Delta_{23}],    [\Delta_{24}], \\ \relax
 [ \Delta_{25} ] ,  [\Delta_{34}], [\Delta_{35}] ,  [\Delta_{45}]
\end{array} 
\Big\rangle$

&${\rm H_{\mathbb T}^2}({\mathcal A}_{01})$   \\

\hline
${\mathcal A}_{02}$& 
$\begin{array}{l}
e_1e_2=e_3, e_1e_3=e_4 
\end{array}$& \relax
$\Big\langle \relax \begin{array}{l} 
[\Delta_{14}], [\Delta_{15}], [\Delta_{23}], [\Delta_{24}], \\ \relax
[\Delta_{25}],  [\Delta_{35}], [\Delta_{45}] \end{array}
\Big\rangle$
&${\rm H_{\mathbb T}^2}({\mathcal A}_{02}) \oplus \langle  [\Delta_{34}]\rangle$  
\\
\hline

${\mathcal A}_{03}$ & 
$\begin{array}{l}
e_1e_2=e_4, e_1e_3=e_5
\end{array}$& 
$\Big\langle 
\begin{array}{l}
[\Delta_{14}], [\Delta_{15}], [\Delta_{23}], [\Delta_{24}], \\ \relax
[\Delta_{25}], [\Delta_{34}], [\Delta_{35}]
\end{array}
\Big\rangle$ & ${\rm H_{\mathbb T}^2}({\mathcal A}_{03})  \oplus \langle  [\Delta_{45}]\rangle$  \\
\hline

${\mathcal A}_{04}$ & 
$\begin{array}{l}
e_1e_2=e_3,  e_1e_3=e_4,\\ e_2e_3=e_5 
\end{array}$& 
$\Big\langle 
\begin{array}{l}[\Delta_{14}],[\Delta_{15}],[\Delta_{24}],[\Delta_{25}]
\end{array}
\Big\rangle$ & ${\rm H_{\mathbb T}^2}({\mathcal A}_{04})\oplus\langle[\Delta_{34}], [\Delta_{35}],[\Delta_{45}]  \rangle$\\
\hline

${\mathcal A}_{05}$& 
$\begin{array}{l}
e_1e_2=e_5, e_3e_4=e_5
\end{array}$&

$\Big\langle 
\begin{array}{l} 
[\Delta_{12}], [\Delta_{13}], [\Delta_{14}], [\Delta_{15}],  [\Delta_{23}],\\ \relax
[\Delta_{24}], [\Delta_{25}], [\Delta_{35}], [\Delta_{45}] 
\end{array} 
\Big\rangle $
&  ${\rm H_{\mathbb T}^2}({\mathcal A}_{05})$\\
\hline

${\mathcal A}_{06}$& 
$\begin{array}{l}
e_1e_2=e_3, e_1e_4=e_5,\\
e_2e_3=e_5
\end{array}$& 
$\Big\langle
\begin{array}{l}[\Delta_{13}],[\Delta_{14}],[\Delta_{15}],[\Delta_{24}],\\ \relax
[\Delta_{25}],[\Delta_{34}], [\Delta_{45}] \end{array}
\Big\rangle$ & ${\rm H_{\mathbb T}^2}({\mathcal A}_{06})\oplus\langle [\Delta_{35}] \rangle$\\
\hline

${\mathcal A}_{07}$& 
$\begin{array}{l}
e_1e_2=e_3,  e_3e_4=e_5 
\end{array}$&
$\Big\langle 
\begin{array}{l}[\Delta_{13}], [\Delta_{14}], [\Delta_{23}], [\Delta_{24}]\end{array}
\Big\rangle$ &  ${\rm H_{\mathbb T}^2}({\mathcal A}_{07}) \oplus\langle [\Delta_{15}], [\Delta_{25}], [\Delta_{35}], [\Delta_{45}]  \rangle$ \\
\hline

${\mathcal A}_{08}$& 
$\begin{array}{l}
e_1e_2=e_3, e_1e_3=e_4, \\ 
e_1e_4=e_5
\end{array}$& 
$\Big\langle
\begin{array}{l}[\Delta_{15}], [\Delta_{23}], [\Delta_{24}], [\Delta_{25}] \end{array}
\Big\rangle$ & ${\rm H_{\mathbb T}^2}({\mathcal A}_{08})\oplus\langle  [\Delta_{34}], [\Delta_{35}],  [\Delta_{45}]\rangle$\\
\hline

${\mathcal A}_{09}$&
$\begin{array}{l}
e_1e_2=e_3, e_1e_3=e_4,\\ 
e_1e_4=e_5,  e_2e_3=e_5
\end{array}$ & 
$\Big\langle
\begin{array}{l}[\Delta_{14}], [\Delta_{15}], [\Delta_{24}], [\Delta_{25}]\end{array}
\Big\rangle$ & 
${\rm H_{\mathbb T}^2}({\mathcal A}_{09})\oplus\langle [\Delta_{34}], [\Delta_{35}],  [\Delta_{45}] \rangle$\\
\hline

${\mathcal A}_{10}$& 
$\begin{array}{l}
e_1e_2=e_3, e_1e_3=e_4,\\ 
e_2e_4=e_5
\end{array}$& 
$\Big\langle \begin{array}{l} [\Delta_{14}], [\Delta_{23}], [\Delta_{34}]+[\Delta_{15}] \end{array} \Big\rangle$ & 
${\rm H_{\mathbb T}^2}({\mathcal A}_{10})\oplus\langle [\Delta_{15}], [\Delta_{25}],[\Delta_{35}],  [\Delta_{45}] \rangle$\\
\hline

${\mathcal A}_{11}$ &  $\begin{array}{l}
e_1e_2=e_3, e_1e_3=e_4,\\ e_3e_4=e_5
\end{array}$ & 
--- &  
$\langle  [\Delta_{14}], [\Delta_{15}],  [\Delta_{23}], [\Delta_{24}], [\Delta_{25}], [\Delta_{35}], [\Delta_{45}] 
 \rangle$ 

\\ \hline

\end{longtable}

\subsubsection{$1$-dimensional  central extensions of ${\mathcal A}_{02}$}
Let us use the notation 
\[\nabla_1=[\Delta_{14}], \nabla_2=[\Delta_{15}], \nabla_3=[\Delta_{23}], \nabla_4=[\Delta_{24}],  \nabla_5=[\Delta_{25}],   
\nabla_6=[\Delta_{35}],\nabla_7= [\Delta_{45}],\nabla_8= [\Delta_{34}].\]
The automorphism group of ${\mathcal A}_{02}$ 
consists of the invertible  matrices of the form 
$$\phi=
\begin{pmatrix}
x& 0& 0& 0& 0\\
f& y& 0& 0& 0\\
u& v& xy& 0& 0\\
h& r& xv& x^2y& l\\
t& g& 0& 0& z
\end{pmatrix}.
$$
Notice that we must have $\det\phi=x^4y^3z\ne 0$. Let $\0=\sum_{i=1}^8\af_i\nb i\in {\rm H}^2_{{\mathcal A}}({\mathcal A}_{02})$.
Then $\phi\theta=\sum\limits_{i=1}^8 \af_i^*  \nb i$,
where
$$
\begin{array}{rcl} 


\alpha_1^*&=& x^2 y (x\af_1 + f \af_4  - t \af_7 + u \af_8),\\

\alpha_2^*&=& xl\af_1  +xz \af_2 + 
   fl \af_4 + zf\af_5 +z u\af_6 + (zh    - tl) \af_7  + ul \af_8,\\

\alpha_3^*&=& x (y^2  \af_3+ v y\af_4 - yg \af_6   - vg \af_7 + (v^2- yr) \af_8),\\

\alpha_4^*&=& x^2 y (y \af_4 - g \af_7 + v\af_8),\\

\alpha_5^* &=& l y\af_4+ zy \af_5  + zv \af_6+   (zr- lg)\af_7 + vl\af_8, \\

\alpha_6^* &=& x (y z \af_6 + v z \af_7  + l y \af_8),\\

\alpha_7^*&=&x^2 y z \af_7,\\

\af_8^*&=& x^3 y^2 \af_8.
\end{array}
$$

We are interested only in those $\theta$ with $\af_8 \ne 0$, so we can assume that $\alpha_8=1$. We have the following cases:

\begin{enumerate}
    \item $\alpha_7 \neq 0$. Choosing
    $l = -\frac{z(y \af_6 + v \af_7)}{y},$
    $v = -y \af_4 + g \af_7$,
    $u = -x \af_1 - f \af_4 + t \af_7,$
    $r = y \af_3 - g \af_6$
    and
    $h = \frac{x \af_1 \af_6 + f \af_4 \af_6 - t \af_6 \af_7 - x \af_2 -   f \af_5}{\af_7}$  
    we have the representative $\langle\alpha_5^\star \nabla_5+ \alpha_7^* \nabla_7+ \alpha_8^* \nabla_8\rangle$, where $\af_5^\star=(-\af_4 \af_6 + \af_3 \af_7 + \af_5)yz$.
    \begin{enumerate}
        \item If $-\af_4 \af_6 + \af_3 \af_7 + \af_5 \ne 0$, then choosing 
        $z = \frac{xy}{\af_7},$
        $x = \sqrt{\frac{-\af_4 \af_6 + \af_3 \af_7 + \af_5}{\af_7}}$
          we have the representative $\langle\nabla_5+\nabla_7+\nabla_8\rangle.$

        \item If $-\af_4 \af_6 + \af_3 \af_7 + \af_5= 0$, then choosing,   
        $z = \frac{xy}{\af_7},$
      we have the representative $\langle\nabla_7+\nabla_8\rangle.$
\end{enumerate}

    \item $\alpha_7=0$. Choosing
    $l = -z \af_6,$
    $v = -y \af_4,$
    $r = y\af_3-g\af_6,$
    $u = -x \af_1 - f \af_4$ 
    we have the representative
    $\langle\alpha_2^\star \nabla_2+\alpha_5^\star \nabla_5+\alpha_8^*  \nabla_8\rangle$, where $\af_2^\star=((\af_5 -\af_4\af_6)f + (\af_2- \af_1\af_6)x)z$ and $\alpha_5^\star=(\af_5 - \af_4 \af_6)yz$.
    \begin{enumerate}
        \item If $\af_5 - \af_4 \af_6\ne 0$ then choosing 
        $z = \frac{x^3 y}{\af_5-\af_4 \af_6},$
        $f = -\frac{(\af_2- \af_1\af_6)x}{\af_5 -\af_4\af_6},$
        we have the representative $\langle\nabla_5+\nabla_8\rangle.$
        
        \item If $\af_5 - \af_4 \af_6=0$ 
        and $\af_2-\af_1 \af_6\ne 0,$ then choosing 
        $z=\frac{x^2y^2}{\af_2-\af_1 \af_6}$,
        we have the representative $\langle \nabla_2+\nabla_8 \rangle.$ 
        
        \item If $\af_5 - \af_4 \af_6=0$ and $\af_2 -\af_1 \af_6=0,$ then  we have the representative $\langle  \nabla_8 \rangle.$ Note that this space is not in ${\bf T}_1(\mathcal{A}_{02})$ because $e_5\in\nabla_8^{\bot
}\cap {\rm Ann}\left( \mathcal{A}_{02}\right)$. It gives an anticommutative algebra with $2$-dimensional annihilator, which was already found above. 

    \end{enumerate}

    \end{enumerate}
    
Summarizing, we obtain the following representatives:
 $\langle \nabla_2+\nabla_8 \rangle$,
 $\langle\nabla_5+\nabla_7+\nabla_8\rangle$,
 $\langle\nabla_5+\nabla_8\rangle$,
 $\langle\nabla_7+\nabla_8\rangle$.
 All of them belong to distinct orbits. The corresponding algebras are:

\begin{longtable}{llllllll}
${\mathbb A}_{05}$ & : &  
$e_1e_2=e_3$, & $e_1e_3=e_4$, &  $e_1e_5=e_6$, &  $e_3e_4=e_6$;\\ 
${\mathbb A}_{06}$ & : &  
$e_1e_2=e_3$, & $e_1e_3=e_4$, &  $e_2e_5=e_6$, & $e_3e_4=e_6$, & $e_4e_5=e_6$;  \\
${\mathbb A}_{07}$ & : &  
$e_1e_2=e_3$, & $e_1e_3=e_4$, &  $e_2e_5=e_6$, & $e_3e_4=e_6$;  \\
${\mathbb A}_{08}$ & : &  
$e_1e_2=e_3$, & $e_1e_3=e_4$, & $e_3e_4=e_6$, & $e_4e_5=e_6$.
\end{longtable}

\subsubsection{$1$-dimensional  central extensions of ${\mathcal A}_{03}$}

Let us use the notation
\[
\nb 1 = \Dl 14, \nb 2 = \Dl 15, \nb 3 = \Dl 23, \nb 4 = \Dl 24,
\nb 5 = \Dl 25, \nb 6 = \Dl 34, \nb 7 = \Dl 35, \nb 8 = \Dl 45.    
\]

Take $\0=\sum_{i=1}^8\af_i\nb i\in {\rm H}^2_{\mathcal{A}}({\mathcal A}_{03})$. The automorphism group of ${\mathcal A}_{03}$ consists of the invertible matrices of the form
$$
\phi=
\begin{pmatrix}
x & 0 & 0 &     0     &   0\\
u & v & w &     0     &   0\\
p & q & r &     0     &   0\\
h & k & l &    xv     & xw\\
a & b & c &    xq     & xr
\end{pmatrix}.
$$
Notice that we must have $\det\phi=x^3(vr-wq)^2\ne 0$. Then $\phi\theta=\sum\limits_{i=1}^8 \af_i^*  \nb i$,
where
\begin{align*}
\af^*_1 &= vx(\af_1x + \af_4u + \af_6p) + qx(\af_2x  + \af_5u  + \af_7p) + (qxh - vxa)\af_8,\\
\af^*_2 &= wx(\af_1x + \af_4u + \af_6p) + rx(\af_2x  + \af_5u  + \af_7p) + (rxh - wxa)\af_8,\\
\af^*_3 &= l(\af_4v + \af_6q) - k(\af_4w + \af_6r) + c(\af_5v + \af_7q) - b(\af_5w + \af_7r) + (rv - qw)\af_3  + (ck - bl)\af_8,\\
\af^*_4 &= vx(\af_4v + \af_6q) + qx(\af_5v  + \af_7q) + (qxk - vxb)\af_8,\\
\af^*_5 &= wx(\af_4v + \af_6q) + rx(\af_5v  + \af_7q) + (rxk - wxb)\af_8,\\
\af^*_6 &= vx(\af_4w + \af_6r) + qx(\af_5w  + \af_7r) + (qxl - vxc)\af_8,\\
\af^*_7 &= wx(\af_4w + \af_6r) + rx(\af_5w  + \af_7r) + (rxl - wxc)\af_8,\\
\af^*_8 &= x^2(vr - wq)\af_8.
\end{align*}

We are interested only in those $\theta$ with $\af_8 \ne 0$. Consider $\af^*_6=\af^*_7=0$ as a linear system in $l$ and $c$. Its determinant is $\af_8^2x^2(vr - wq)\neq 0$. So, we can find $l,c$ such that $\af^*_6=\af^*_7=0$. For the same reason, we may find $k,b$ such that $\af^*_4=\af^*_5=0$ and $h,a$ such that $\af^*_1=\af^*_2=0$. Thus, we may suppose from the very beginning that $\af_1=\af_2=\af_4=\af_5=\af_6=\af_7=0$, $\af_8\ne 0$. Choosing $a=b=c=h=k=l=0$ we have $\af^*_1=\af^*_2=\af^*_4=\af^*_5=\af^*_6=\af^*_7=0$ and 
\begin{align*}
\af^*_3 &= (vr - wq)\af_3,\\
\af^*_8 &= x^2(vr - wq)\af_8.
\end{align*}
Thus, we have two representatives: $\la\nb 8\ra$ and $\la\nb 3+\nb 8\ra$, depending on whether $\af_3=0$ or not.

 The algebras corresponding to $\la\nb 3+\nb 8\ra$ and $\la\nb 8\ra$ are:

\begin{longtable}{llllllll} 
${\mathbb A}_{09}$ & : &  
$e_1e_2=e_4$, & $e_1e_3=e_5$, & $e_2e_3=e_6$, & $e_4e_5=e_6$;\\
${\mathbb A}_{10}$ & : &  
$e_1e_2=e_4$, & $e_1e_3=e_5$, & $e_4e_5=e_6$.
\end{longtable}

\subsubsection{$1$-dimensional  central extensions of ${\mathcal A}_{04}$}
Let us use the notation 

\[ \nabla_1=[\Delta_{14}],  \ \nabla_2=[\Delta_{15}], 
\ \nabla_3=[\Delta_{24}],\ \nabla_4=[\Delta_{25}],
\ \nabla_5=[\Delta_{34}],\ \nabla_6=[\Delta_{35}],
\ \nabla_7=[\Delta_{45}].\]
The automorphism group of ${\mathcal A}_{04}$ consists of invertible matrices of the form
$$\phi=
\begin{pmatrix}
    x& y& 0& 0& 0\\
    v& z& 0& 0& 0\\
    u& h& xz-yv& 0& 0\\
    l& r& xh-yu& x(xz-yv)& y(xz-yv)\\
    t& g& vh-zu& v(xz-yv)& z(xz-yv)
\end{pmatrix}.$$
Thus, we must have $\det\phi=(xz-yv)^5\ne 0$. For $\0=\sum_{i=1}^7\af_i\nb i\in {\rm H}^2_{{\mathcal A}}({\mathcal A}_{04})$, we get $\phi\theta=\sum\limits_{i=1}^7 \af_i^*  \nb i$,
where
$$\begin{array}{rcl}
\alpha_1^* &=& (xz - yv) (v (x \alpha_2 + v \alpha_4 + u \alpha_6 + l \alpha_7) + x (x \alpha_1 + v \alpha_3 + u \alpha_5 - t \alpha_7)),\\
\alpha_2^* &=& (xz - yv) (z (x \alpha_2 + v \alpha_4 + u \alpha_6 + l \alpha_7) +     y (x \alpha_1 + v \alpha_3 + u \alpha_5 - t \alpha_7)),\\
\alpha_3^* &=& (xz - yv) (x (y \alpha_1 + z \alpha_3 + h \alpha_5 - g \alpha_7) +     v (y \alpha_2 + z \alpha_4 + h \alpha_6 + r \alpha_7)),\\
\alpha_4^* &=& (xz - yv) (y (y \alpha_1 + z \alpha_3 + h \alpha_5 - g \alpha_7) +     z (y \alpha_2 + z \alpha_4 + h \alpha_6 + r \alpha_7)),\\
\alpha_5^* &=& ( x z-  yv)^2 (x \alpha_5  + v \alpha_6 + u \alpha_7), \\
\alpha_6^* &=& ( x z-yv)^2 (y \alpha_5  + z \alpha_6 + h \alpha_7),\\
\alpha_7^* &=& (xz - yv)^3  \alpha_7.
\end{array}
$$

We are only interested in cocycles with $(\alpha_5, \alpha_6, \alpha_7) \neq (0,0,0).$ 
Note that this condition is invariant under automorphisms. We have several cases to consider:
\begin{enumerate}
    \item $\af_7\ne 0$. Then choosing 
    $x=0,$ $z=0,$ $v=1,$ $y=-\alpha_7^{-1/3},$ $u=-\frac{\alpha_6}{\alpha_7}, $
    $h=\frac{\alpha_5}{\alpha_7^{4/3}},$
    $g=\frac{\alpha_5^2 - \alpha_1 \alpha_7}{\alpha_7^{7/3}},$
    $r=\frac{\alpha_2 \alpha_7-\alpha_5 \alpha_6}{\alpha_7^{7/3}},$
    $t=\frac{\alpha_3 \alpha_7-\alpha_5 \alpha_6}{\alpha_7^2},$
    $l=\frac{\alpha_6^2 - \alpha_4 \alpha_7}{\alpha_7^2},$
    we get the representative $\langle \nabla_7 \rangle.$

        \item $\af_7= 0$ and $\af_6\ne 0$. Then taking $z=-\frac{y \af_5}{\af_6}$ we can make $\af^*_7=\af^*_6=0$, so we shall suppose that $\af_7=\af_6=0$ and $\af_5 \neq 0$ from the very beginning.
        
        \begin{enumerate}
            \item If $\af_4 \neq 0$, then choosing 
              $x=\frac{\alpha_4^{2/7}}{\alpha_5^{3/7}},$ 
              $y=0,$ 
              $z=\frac{\alpha_5^{1/7}}{\alpha_4^{3/7}},$
              $v=-\frac{\alpha_2}{\alpha_4^{5/7} \alpha_5^{3/7}},$
              $h=\frac{\alpha_2 - \alpha_3}{\alpha_4^{3/7} \alpha_5^{6/7}},$
              $u= \frac{\alpha_2 \alpha_3 - \alpha_1 \alpha_4}{\alpha_4^{5/ 7}  \alpha_5^{10/ 7}},$ 
              we get the representative  $\langle \nabla_4+\nabla_5 \rangle.$
    
            \item If $\af_4=0$ and $\af_2 \neq 0$, then choosing 
            $x=\frac{\alpha_2}{\alpha_5},$
            $y=0,$ $ v=0, $ $z=1,$
            $u=-\frac{\alpha_1 \alpha_2}{\alpha_5^2},$
            $h=-\frac{\alpha_3}{\alpha_5},$
            we get the representative $\langle \nabla_2+\nabla_5 \rangle.$
            \item If $\af_4=0$ and $\af_2 = 0$, then $e_5\in\theta^{\bot
}\cap {\rm Ann}\left( \mathcal{A}_{04}\right)$ so we get an algebra with a $2$-dimensional annihilator, which we have already listed.        
        \end{enumerate}
\end{enumerate}        

The algebras corresponding to $\langle \nabla_2+\nabla_5 \rangle$, $\langle \nabla_4+\nabla_5 \rangle$ and $\langle \nabla_7 \rangle$ are:

	\begin{longtable}{llllllll} 
	${\mathbb A}_{11}$ & : &  
	$e_1e_2=e_3$, & $e_1e_3=e_4$, & $e_1e_5=e_6$, & $e_2e_3=e_5$, & $e_3e_4=e_6$;\\
	${\mathbb A}_{12}$ & : &  
	$e_1e_2=e_3$, & $e_1e_3=e_4$, & $e_2e_3=e_5$, & $e_2e_5=e_6$, & $e_3e_4=e_6$; \\
	${\mathbb A}_{13}$ & : &  
	$e_1e_2=e_3$, & $e_1e_3=e_4$, & $e_2e_3=e_5$, & $e_4e_5=e_6$.
	\end{longtable}

\subsubsection{$1$-dimensional  central extensions of ${\mathcal A}_{06}$}
Let us use the notation
	\[
	\nb 1 = \Dl 13, \nb 2 = \Dl 14, \nb 3 = \Dl 15, \nb 4 = \Dl 24,
	\nb 5 = \Dl 25, \nb 6 = \Dl 34, \nb 7 = \Dl 45, \nb 8 = \Dl 35.    
	\]

	The automorphism group of ${\mathcal A}_{06}$\ consists of invertible
	matrices of the form 
	$$\phi=
	\begin{pmatrix}
	x& p& 0& 0& 0\\
	0& y& 0& 0& 0\\
	z& t& xy& -py& 0\\
	q& r& 0& y^2& 0\\
	s& h& xr-yz-pq& f& xy^2
	\end{pmatrix}.$$
	So, we must have $\det\phi=x^3y^6\ne 0$. For $\0=\sum_{i=1}^8\af_i\nb i\in {\rm H}^2_{{\mathcal A}}({\mathcal A}_{06})$, we get $\phi\theta=\sum\limits_{i=1}^8 \af_i^*  \nb i$,
	where
	\begin{align*}
		\af^*_1 &= xy(\af_1x - \af_6q - \af_8s) -(pq - rx + yz)(\af_3x + \af_7q + \af_8z),\\
		\af^*_2 &= xy(\af_2y- 2\af_1p) + fx\af_3  + y(pq + rx + yz)\af_6 + (fq-sy^2)\af_7 + (psy + hxy  + fz)\af_8\\
		&\quad+(pq - rx + yz)(\af_3p+\af_5y+\af_7r+\af_8t),\\
		\af^*_3 &= xy^2(\af_3x + \af_7q + \af_8z),\\
		\af^*_4 &= py(\af_2y-\af_1p) + \af_3fp + \af_4y^3 + \af_5fy + y(pr + ty)\af_6 + (fr-hy^2)\af_7 + (hpy + ft)\af_8,\\ 
		\af^*_5 &= xy^2(\af_3p + \af_5y + \af_7r + \af_8t),\\
		\af^*_6 &= y(xy^2\af_6+(pq - rx + yz)(\af_7y - \af_8p) + fx\af_8),\\
		\af^*_7 &= xy^3(\af_7y-\af_8p),\\
		\af^*_8 &= x^2y^3\af_8.
	\end{align*}
	
We are only interested in those $\0$ with $\af_8\ne 0$, so for simplicity we assume that $\af_8=1$. Choosing $p=\af_7y$, $t=-(\af_7r + \af_3\af_7y + \af_5y)$, $z=-(\af_3x + \af_7q)$, $f=-\af_6y^2$, $s=\af_1x-\af_6q$, $h=(\af_3\af_6 + 2\af_1\af_7 - \af_2)y-\af_6r$ we obtain the representative $\la\af^\star_4\nb 4+\af^*_8\nb 8\ra$, where 
	$\af^\star_4=(\af_2\af_7 + \af_4-\af_3\af_6\af_7 - \af_1\af_7^2 - \af_5\af_6)y^3.$
	Hence, we have two representatives $\la\nb 8\ra$ and $\la\nb 4+\nb 8\ra$ depending on whether $\af_2\af_7 + \af_4-\af_3\af_6\af_7 - \af_1\af_7^2 - \af_5\af_6=0$ or not.

The algebras corresponding to $\la\nb 4+\nb 8\ra$ and $\la\nb 8\ra$ are:	
	
	\begin{longtable}{llllllll} 
	${\mathbb A}_{14}$ & : &  
		$e_1e_2=e_3$, & $e_1e_4=e_5$, & $e_2e_3=e_5$, & $e_2e_4=e_6$, & $e_3e_5=e_6$; \\
	${\mathbb A}_{15}$ & : &  
		$e_1e_2=e_3$, & $e_1e_4=e_5$, & $e_2e_3=e_5$, & $e_3e_5=e_6$.
	\end{longtable}

\subsubsection{$1$-dimensional  central extensions of ${\mathcal A}_{07}$}
Let us use the notation

\[ \nabla_1=[\Delta_{13}], \nabla_2=[\Delta_{14}], \nabla_3=[\Delta_{23}], \nabla_4=[\Delta_{24}],
\nabla_5=[\Delta_{15}], \nabla_6=[\Delta_{25}], \nabla_7=[\Delta_{35}], \nabla_8=[\Delta_{45}].  \]

The automorphism group of ${\mathcal A}_{07}$\ consists of the invertible
matrices of the form 
$$\phi=
\begin{pmatrix}
x & y& 0& 0& 0\\ 
z & v& 0& 0& 0\\ 
0 & 0& xv-yz& w& 0\\ 
0& 0& 0& t& 0\\ 
r& q& 0& h& (xv-yz)t \end{pmatrix}.$$
So, we must impose the condition $\det\phi=(xv-yz)^3t^2\ne 0$. For $\0=\sum_{i=1}^8\af_i\nb i\in {\rm H}^2_{{\mathcal A}}({\mathcal A}_{07})$, we get $\phi\theta=\sum\limits_{i=1}^8 \af_i^*  \nb i$,
where
$$\begin{array}{rcl}
\alpha^*_1 &=& (v x - y z) (x \alpha_1 + z \alpha_3 - r \alpha_7),\\
\alpha^*_2 &=& t x \alpha_2 + t z \alpha_4 + h x \alpha_5 + h z \alpha_6 + 
  w (x \alpha_1 + z \alpha_3 - r \alpha_7) - r t \alpha_8,\\
\alpha^*_3 &=& (v x - y z) (y \alpha_1 + v \alpha_3 - q \alpha_7),\\
\alpha^*_4 &=& t y \alpha_2 + t v \alpha_4 + h y \alpha_5 + h v \alpha_6 + 
  w (y \alpha_1 + v \alpha_3 - q \alpha_7) - q t \alpha_8,\\
\alpha^*_5 &=& t (v x - y z) (x \alpha_5 + z \alpha_6),\\
\alpha^*_6 &=& t (v x - y z) (y \alpha_5 + v \alpha_6),\\
\alpha^*_7 &=& t (v x - y z)^2 \alpha_7,\\
\alpha^*_8 &=& t (v x - y z) (w \alpha_7 + t \alpha_8).
\end{array}$$
 We are interested in cocycles with $(\af_5,\af_6,\af_7,\af_8)\ne(0,0,0,0).$ There are several cases to consider:
\begin{enumerate}
		
		\item $\alpha_7 \neq 0.$  
		\begin{enumerate}
			\item $(\af_5,\af_6)\ne (0,0).$ Then we may assume that $\af_5\ne 0$. Choosing 
			$z=0$, $y=-\frac{\af_6}{\af_7}$, $v=\frac{\af_5}{\af_7}$, $r=\frac{\af_1x}{\af_7}$, $q=\frac{\af_3\af_5 - \af_1\af_6}{\af_7^2}$, $w=-\frac{\af_8t}{\af_7}$, $h=\frac{(\af_1\af_8-\af_2\af_7)t}{\af_5\af_7}$ we have the representative $\langle \frac{\alpha^\star}{\af_5^2\af_7x^2} \nabla_4+ \nabla_5+\nabla_7 \rangle$, where
			\begin{align*}
			\af^\star = \af_4\af_5\af_7 - \af_2\af_6\af_7 - \af_3\af_5\af_8 + \af_1\af_6\af_8.
			\end{align*}
			Hence, we have two representatives $\langle \nabla_4+ \nabla_5+ \nabla_7 \rangle$ and $\langle \nabla_5+ \nabla_7 \rangle$ depending on whether $\af^\star=0$  or not.
			
			\item $(\af_5, \af_6) = (0,0).$ Choosing 
			$x=0$, $w=-\frac{\af_8t}{\af_7}$, $r=\frac{\alpha_3z}{\alpha_7}$, $q=\frac{\af_1y+\af_3v}{\af_7}$,
			we have the representative 
			$\langle \alpha_2^\star \nabla_2+ \alpha_4^\star \nabla_4+\alpha_7^\star \nabla_7 \rangle$, where
			\begin{align*}
			\af^\star_2 &= (\af_4\af_7 - \af_3\af_8)\frac{tz}{\af_7},\\
			\af^\star_4 &= ((\af_4\af_7 - \af_3\af_8)v + (\af_2\af_7 - \af_1\af_8)y)\frac t{\af_7},\\
			\af^\star_7 &= ty^2z^2\af_7.
			\end{align*}
			\begin{enumerate}
				\item $\af_4\af_7 - \af_3\af_8\ne 0$. Then choosing $v=\frac{\af_1\af_8-\af_2\af_7}{\af_4\af_7 - \af_3\af_8}$, $z=\frac{\af_4\af_7 - \af_3\af_8}{\af_7^2}$ and $y=1$, we obtain the representative $\langle \nabla_2+\nabla_7 \rangle$.
				\item $\af_4\af_7 - \af_3\af_8=0$. Then we have two representatives $\la\nb 7\ra$ and $\la\nb 4+\nb 7\ra$ depending on whether $\af_2\af_7 - \af_1\af_8=0$ or not.
			\end{enumerate}
			
		\end{enumerate}
		\item $\alpha_7=0.$ 
		
		\begin{enumerate}
			\item $\alpha_8\neq 0$ and $(\af_5, \af_6) \neq (0,0).$ Then we may suppose that $\af_6\neq 0.$
			Choosing any $x,z$ such that $\alpha_5x + \alpha_6z\ne 0$,
			$v = -\frac{y \alpha_5}{\alpha_6}$,
			$t = \frac{x \alpha_5 + z \alpha_6}{\alpha_8}$ and the values of $r$ and $q$ such that $\af^*_2=\af^*_4=0$, we have the representative 
			$\langle \alpha_1^\star \nabla_1+ \af^\star_3\nb 3+\nabla_5+\nabla_8 \rangle$, where
			\begin{align*}
			\af^\star_1 &= \frac{(\af_1x + \af_3z)\af_8}{(\af_5x + \af_6z)^2},\\
			\af^\star_3 &= \frac{(\af_1\af_6-\af_3\af_5)\af_8y}{(\af_5x + \af_6z)^2\af_6}.
			\end{align*}
			
			\begin{enumerate}
				\item $\af_1\af_6-\af_3\af_5\ne 0$. Then we can choose $x, z$ such that $\af_1x + \af_3z=\alpha_5x + \alpha_6z=\alpha_8\ne 0$. For these values of $x$ and $z$ and $y=\frac{\af_6\af_8}{\af_1\af_6-\af_3\af_5}$, we obtain the representative $\langle\nb 1 + \nabla_3+  \nabla_5+ \nabla_8 \rangle.$
				
				\item $\af_1\af_6-\af_3\af_5 = 0$. Then 
				we have two representatives 
				$\langle \nabla_5+ \nabla_8 \rangle$ and $\langle \nabla_1+  \nabla_5+ \nabla_8 \rangle$ depending on whether $\alpha_3=0$  or not.
				
			\end{enumerate}
			
			\item $\alpha_8\neq 0$ and $(\af_5, \af_6) = (0,0).$ Choosing 
			$q = \frac{w y \alpha_1 + t y \alpha_2 + v w \alpha_3 + t v \alpha_4}{t \alpha_8}$
			and 
			$r = \frac{w x \alpha_1 + t x \alpha_2 + w z \alpha_3 + t z \alpha_4}{t \alpha_8},$
			we have the representative 
			$\langle \alpha_1^\star \nabla_1+ \alpha_3^\star \nabla_3+\af^\star_8\nabla_8 \rangle$, where
			\begin{align*}
				\af^\star_1 &= (vx - yz)(\af_1x + \af_3z),\\
				\af^\star_3 &= (vx - yz)(\af_3v + \af_1y),\\
				\af^\star_8 &= (vx - yz)\af_8t^2.
			\end{align*}
			\begin{enumerate}
				\item $(\af_1,\af_3)\ne (0,0)$. Then we may assume that $\af_3\ne 0$. Choosing $x=0$, $y=\af_3$, $v=-\af_1$ and $z=\frac{\af_8 t^2}{\af_3}$, we get the representative $\langle \nabla_1+  \nabla_8 \rangle$.
				\item $(\af_1,\af_3)=(0,0)$. Then we have the representative $\langle \nabla_8 \rangle$.
			\end{enumerate}

			\item $\alpha_8=0$. Then $(\alpha_5, \alpha_6) \neq (0,0).$ We may suppose that $\alpha_6 \neq 0.$
			
			\begin{enumerate}
				\item $\alpha_3 \alpha_5 - \alpha_1 \alpha_6\ne 0$. Then we will  choose $x, z$ such that $\af_1x + \af_3z=\af_5x + \af_6z=\alpha_3 \alpha_5 - \alpha_1 \alpha_6\ne 0$. Take $y=-\af_6$ and $v=\af_5$, so that $\af_6^*=0$. Considering $\af^*_2=\af^*_4=0$ as a linear system in $h$ and $w$, we see that it has a unique solution $h=\frac{t(\alpha_1 \alpha_4- \alpha_2 \alpha_3)}{\alpha_3 \alpha_5-\alpha_1 \alpha_6}$, $w= \frac{t (\alpha_2 \alpha_6-\alpha_4 \alpha_5)}{\alpha_3 \alpha_5-\alpha_1 \alpha_6}$. Finally, taking $t=1$ we have the representative    $\langle \nabla_1+ \nabla_3+  \nabla_5 \rangle$.

				\item $\alpha_3 \alpha_5 = \alpha_1 \alpha_6.$
				Choosing 
				$v= -\frac{y  \alpha_5}{\alpha_6}$ and
				$h= -\frac{x(w \alpha_3 \alpha_5 + t \alpha_2 \alpha_6) + z(w \alpha_3 \alpha_6 +  t \alpha_4 \alpha_6)}{\alpha_6 (x \alpha_5 + z \alpha_6)}$, for any $x, z$ such that $\af_5x + \af_6z\neq 0$, 
				we obtain the representative 
				$\langle \af^\star_1 \nabla_1+ \af^\star_4 \nabla_4 + \af^\star_5 \nabla_5 \rangle$, where
				\begin{align*}
				\af^\star_1 &= -\frac{y \alpha_3 (x \alpha_5 + z \alpha_6)^2}{\alpha_6^2},\\
				\af^\star_4 &= \frac{t y (\alpha_2\alpha_6 - \alpha_4 \alpha_5)}{\alpha_6},\\
				\af^\star_5 &= -\frac{t y (x \alpha_5 + z \alpha_6)^2}{\alpha_6}.
				\end{align*}
				\begin{enumerate}
					\item $\alpha_3\neq 0$ and $\alpha_2\alpha_6= \alpha_4 \alpha_5$. Then we have the representative $\langle \nabla_1 +\nabla_5 \rangle.$
					\item $\alpha_3 = 0 $ and $\alpha_2\alpha_6= \alpha_4 \alpha_5$. Then we have the representative $\langle \nabla_5 \rangle.$
					\item $\alpha_3\neq 0 $ and $\alpha_2\alpha_6\neq \alpha_4 \alpha_5$. Then we have the representative $\langle \nabla_1 +\nabla_4+\nabla_5 \rangle.$ 
					\item $\alpha_3=0$ and $\alpha_2\alpha_6\neq \alpha_4 \alpha_5$. Then we have the representative $\langle \nabla_4+\nabla_5 \rangle.$

				\end{enumerate}
				
			\end{enumerate}

		\end{enumerate}
		
	\end{enumerate}

Analyzing the representatives found above, we see that $\orb\la\nb 2+\nb 7\ra=\orb\la\nb 4+\nb 7\ra$. The rest of the representatives belong to distinct orbits. These are 
	$\langle \nabla_1+ \nabla_3+  \nabla_5 \rangle$,
	$\langle\nb 1 + \nabla_3+  \nabla_5+ \nabla_8 \rangle$,
	$\langle \nabla_1 +\nabla_4+\nabla_5 \rangle$,
	$\langle \nabla_1 +\nabla_5 \rangle$,
	$\langle \nabla_1+  \nabla_5+ \nabla_8 \rangle$,
	$\langle \nabla_1+  \nabla_8 \rangle$,
	$\langle \nabla_2+\nabla_7 \rangle$,
	$\langle \nabla_4+\nabla_5 \rangle$
	$\langle \nabla_4+ \nabla_5+ \nabla_7 \rangle$,
	$\langle \nabla_5 \rangle$,
	$\langle \nabla_5+ \nabla_7 \rangle$,
	$\langle \nabla_5+ \nabla_8 \rangle$,
	$\la\nb 7\ra$,
	$\langle \nabla_8 \rangle$. 
	The corresponding algebras are:
	
	\begin{longtable}{llllllll} 
	${\mathbb A}_{16}$ & : &  
	$e_1e_2=e_3$, & $e_1e_3=e_6$, & $e_1e_5=e_6$, & $e_2e_3=e_6$, & $e_3e_4=e_5$; \\
	${\mathbb A}_{17}$ & : &  
	$e_1e_2=e_3$, & $e_1e_3=e_6$, & $e_1e_5=e_6$, & $e_2e_3=e_6$, & $e_3e_4=e_5$, & $e_4e_5=e_6$; \\
	${\mathbb A}_{18}$ & : &  
	$e_1e_2=e_3$, & $e_1e_3=e_6$, & $e_1e_5=e_6$, & $e_2e_4=e_6$, & $e_3e_4=e_5$; \\
	${\mathbb A}_{19}$ & : &  
	$e_1e_2=e_3$, & $e_1e_3=e_6$, & $e_1e_5=e_6$, & $e_3e_4=e_5$; \\
	${\mathbb A}_{20}$ & : &  
	$e_1e_2=e_3$, & $e_1e_3=e_6$, & $e_1e_5=e_6$, & $e_3e_4=e_5$, & $e_4e_5=e_6$; \\
	${\mathbb A}_{21}$ & : &  
	$e_1e_2=e_3$, & $e_1e_3=e_6$, & $e_3e_4=e_5$, & $e_4e_5=e_6$; \\
	${\mathbb A}_{22}$ & : &  
	$e_1e_2=e_3$, & $e_1e_4=e_6$, & $e_3e_4=e_5$, & $e_3e_5=e_6$; \\
	${\mathbb A}_{23}$ & : &  
	$e_1e_2=e_3$, & $e_1e_5=e_6$, & $e_2e_4=e_6$, & $e_3e_4=e_5$; \\
	${\mathbb A}_{24}$ & : &  
	$e_1e_2=e_3$, & $e_1e_5=e_6$, & $e_2e_4=e_6$, & $e_3e_4=e_5$, & $e_3e_5=e_6$; \\
	${\mathbb A}_{25}$ & : &  
	$e_1e_2=e_3$, & $e_1e_5=e_6$, & $e_3e_4=e_5$; \\
	${\mathbb A}_{26}$ & : &  
	$e_1e_2=e_3$, & $e_1e_5=e_6$, & $e_3e_4=e_5$, & $e_3e_5=e_6$; \\
	${\mathbb A}_{27}$ & : &  
	$e_1e_2=e_3$, & $e_1e_5=e_6$, & $e_3e_4=e_5$, & $e_4e_5=e_6$; \\
	${\mathbb A}_{28}$ & : &  
	$e_1e_2=e_3$, & $e_3e_4=e_5$, & $e_3e_5=e_6$; \\
	${\mathbb A}_{29}$ & : &  
	$e_1e_2=e_3$, & $e_3e_4=e_5$, & $e_4e_5=e_6$.
	\end{longtable}


\subsubsection{$1$-dimensional  central extensions of ${\mathcal A}_{08}$}
Let us use the notation
\[\nabla_1=[\Delta_{15}], \nabla_2= [\Delta_{23}],  \nabla_3=[\Delta_{24}], \nabla_4=[\Delta_{25}], 
\nabla_5=[\Delta_{34}], \nabla_6=[\Delta_{35}], \nabla_7=[\Delta_{45}] .\]
The automorphism group of ${\mathcal A}_{08}$\ consists of the invertible
matrices of the form 
$$\phi=
\begin{pmatrix}
x& 0& 0& 0& 0\\
z& y& 0& 0& 0\\
t& p& xy& 0& 0\\
q& r& xp& x^2y& 0\\
h&s&xr&x^2p&x^3y\end{pmatrix}.$$
So, we must have $\det\phi=x^7y^4\ne 0$. For $\0=\sum_{i=1}^7\af_i\nb i\in {\rm H}^2_{{\mathcal A}}({\mathcal A}_{08})$, we get $\phi\theta=\sum\limits_{i=1}^7 \af_i^*  \nb i$,
%
%
%
where
$$\begin{array}{rcl}
\alpha_1^* &=&x^3 y (x \alpha_1 + z \alpha_4 + t \alpha_6 + q \alpha_7), \\

\alpha_2^* &=&x (y (y \alpha_2 - r \alpha_5 - s \alpha_6) + r (y \alpha_4 + p \alpha_6 + r \alpha_7) +     p (y \alpha_3 + p \alpha_5 - s \alpha_7)), \\

\alpha_3^* &=& x^2 (p (y \alpha_4 + p \alpha_6 + r \alpha_7) + y (y \alpha_3 + p \alpha_5 - s \alpha_7)), \\

\alpha_4^* &=& x^3 y (y \alpha_4 + p \alpha_6 + r \alpha_7), \\

\alpha_5^* &=& x^3 (y^2 \alpha_5 + p y \alpha_6 + p^2 \alpha_7 - r y \alpha_7), \\ 
\alpha_6^* &=& x^4 y (y \alpha_6 + p \alpha_7), \\
\alpha_7^* &=& x^5 y^2 \alpha_7.
\end{array}$$
 We are interested only in those cocycles with $(\af_5,\af_6,\af_7)\ne(0,0,0)$.
 \begin{enumerate}
     \item $\alpha_7\neq 0.$ 
     Then choosing 
     $p=-\frac{y \alpha_6}{\alpha_7},$
     $r=\frac{y \alpha_5}{\alpha_7},$
     $q=-\frac{x \alpha_1+z \alpha_4+t \alpha_6}{ \alpha_7},$
     $s=\frac{y (\alpha_6^3-\alpha_4 \alpha_6 \alpha_7-2 \alpha_5 \alpha_6 \alpha_7+\alpha_3 \alpha_7^2)}{ \alpha_7^3}$    we have the representative  $\langle \af_2^\star \nabla_2 +\af_4^\star \nabla_4 +\alpha^*_7 \nabla_7\rangle$, where
     \begin{align*}
     	\af_2^\star &= \frac{x y^2}{\alpha_7}(\alpha_4 \alpha_5 - \alpha_3 \alpha_6 + \alpha_2 \alpha_7),\\
     	\af_4^\star &= \frac{x^3 y^2}{\af_7} (\alpha_4\af_7 + \alpha_5\af_7 - \alpha_6^2).
     \end{align*}
\begin{enumerate}
	\item $\alpha_7 \alpha_4 + \alpha_7\alpha_5 - \alpha_6^2\ne 0$. Then we have the family of  representatives $\langle \alpha \nabla_2+\nabla_4+ \nabla_7 \rangle$ of distinct orbits.
	
    \item $\alpha_7 \alpha_4 + \alpha_7\alpha_5 - \alpha_6^2=0$. Then we have two representatives $\langle \nabla_7 \rangle$ and $\la\nb 2+\nb 7\ra$ depending on whether $\alpha_4 \alpha_5 - \alpha_3 \alpha_6 + \alpha_2 \alpha_7=0$ or not.

\end{enumerate}

     \item $\alpha_7=0$ and $\alpha_6 \neq 0$. Then choosing 
     $t=-\frac{x \alpha_1+z \alpha_4}{ \alpha_6},$
     $p=-\frac{y \alpha_5}{\alpha_6},$
     $s=\frac{y(\alpha_5^3-\alpha_3 \alpha_5 \alpha_6+\alpha_2 \alpha_6^2)+r\alpha_6^2(\alpha_4 -2 \alpha_5)}{\alpha_6^3},$
     we have the representative $\langle \af^\star_3 \nabla_3 + \af^\star_4 \nabla_4 + \af^\star_6\nabla_6\rangle$, where
     \begin{align*}
     	\af^\star_3 &= \frac{x^2 y^2}{\alpha_6}(\alpha_3 \alpha_6-\alpha_4 \alpha_5),\\
     	\af^\star_4 &= x^3 y^2 (\alpha_4- \alpha_5),\\
     	\af^\star_6 &= x^4 y^2 \alpha_6.
     \end{align*}
        
     \begin{enumerate}
     	\item  $\alpha_4 \neq \af_5.$ Then we have the family of representatives $\langle \alpha \nabla_3+\nabla_4+\nabla_6\rangle$ of distinct orbits.
     	
        \item $\alpha_4=\af_5$. Then we have two representatives $\langle\nabla_6\rangle$ and $\langle\nb 3+\nabla_6\rangle$ depending on whether and $\alpha_4 \alpha_5 = \alpha_3 \alpha_6$ or not.
     \end{enumerate}

\item $\af_7=0, \af_6=0$ and $\af_5\neq0.$
\begin{enumerate}
    \item $\af_5 \neq \pm \af_4$. Then choosing 
    $p=-\frac{y \af_3}{\alpha_4 + \alpha_5}$ and
    $r= \frac{ y (  \alpha_3^2 \alpha_4 - \alpha_2 (\alpha_4 + \alpha_5)^2)}{(\alpha_4 - \alpha_5) (\alpha_4 + \alpha_5)^2},$
    we have the representative $\langle \af^\star_1\nabla_1 + \af^\star_4\nabla_4 + \af^\star_5\nabla_5 \rangle$, where
    \begin{align*}
    	\af^\star_1 &= x^3 y (x \alpha_1+z \alpha_4),\\
    	\af^\star_4 &= x^3 y^2\alpha_4,\\
    	\af^\star_5 &= x^3 y^2 \alpha_5.
    \end{align*}
\begin{enumerate}
	\item $\af_4\neq 0$. Then we have the family of representatives $\langle \af \nabla_4+ \nabla_5 \rangle_{\af \not\in\{0,\pm 1\}}$ of distinct orbits.
	\item $\af_4=0$ and $\af_1\neq 0$. Then we have the representative $\langle \nabla_1+ \nabla_5 \rangle.$
    \item $\af_4=0$ and $\af_1=0$. Then we have the representative $\langle \nabla_5 \rangle$ which will be joined with the family $\langle \af \nabla_4+ \nabla_5 \rangle_{\af \not\in\{0,\pm 1\}}$.
\end{enumerate}

\item $\af_5=\af_4$. Then by choosing 
$z=-\frac{x \alpha_1}{\alpha_4}$ and $p=-\frac{y \alpha_3}{ 2 \alpha_4}$ we have the representative $\langle \af^\star_2 \nabla_2 + \af^\star_4\nabla_4 + \af^\star_5\nabla_5\rangle$, where
	\begin{align*}
		\af^\star_2 &= \frac{x y^2}{4 \alpha_4} (4 \alpha_2 \alpha_4-\alpha_3^2),\\
		\af^\star_4 &= x^3 y^2 \alpha_4,\\
		\af^\star_5 &= x^3 y^2 \alpha_4.
	\end{align*}
\begin{enumerate}
    \item $4 \alpha_2 \alpha_4-\alpha_3^2\ne 0$. Then we have the representative $ \langle  \nabla_2 + \nabla_4 +\nabla_5 \rangle.$
    \item $4 \alpha_2 \alpha_4-\alpha_3^2=0$. Then we have the representative $ \langle   \nabla_4 +\nabla_5 \rangle$ which will be joined with the family $\langle \af \nabla_4+ \nabla_5 \rangle_{\af \not\in\{0,\pm 1\}}$.

\end{enumerate}

\item $\af_5=-\af_4$. Then by choosing 
$z = -\frac{x \alpha_1}{\alpha_4}$ and $r= \frac{p^2 \alpha_4-y^2 \alpha_2-p y \alpha_3}{2 y \alpha_4}$ we have the representative $\langle \af^\star_3 \nabla_3 + \af^\star_4\nabla_4 +\af^\star_5\nabla_5 \rangle$, where 
	\begin{align*}
		\af^\star_3 &= x^2 y^2 \alpha_3,\\
		\af^\star_4 &= x^3 y^2 \alpha_4,\\
		\af^\star_5 &= -x^3 y^2 \alpha_4.
	\end{align*}
\begin{enumerate}
    \item $\alpha_3 \neq 0$. Then we have the representative $ \langle  \nabla_3 - \nabla_4 +\nabla_5 \rangle.$
    \item $\alpha_3=0$. Then we have the representative $ \langle  - \nabla_4 +\nabla_5 \rangle$ which will be joined with the family $\langle \af \nabla_4+ \nabla_5 \rangle_{\af \not\in\{0,\pm 1\}}$.

\end{enumerate}

\end{enumerate}

 \end{enumerate}
 
 Summarizing, we obtain the following representatives:
 $\langle \nabla_1+ \nabla_5 \rangle$,
 $\langle\nabla_2 + \nabla_4 +\nabla_5 \rangle$,  
 $\langle \alpha \nabla_2+\nabla_4+ \nabla_7 \rangle$,  
 $\la\nb 2+\nb 7\ra$,
 $\langle \nabla_3 - \nabla_4 +\nabla_5 \rangle$,
 $\langle \alpha \nabla_3+\nabla_4+\nabla_6\rangle$,
 $\langle\nb 3+\nabla_6\rangle$,
 $\langle \af \nabla_4+ \nabla_5 \rangle$,
 $\langle\nabla_6\rangle$,
 $\langle \nabla_7 \rangle$.
 All of these representatives belong to distinct orbits.
 The corresponding algebras are:
 
 \begin{longtable}{llllllll} 
 ${\mathbb A}_{30}$ & : &  
 $e_1e_2=e_3$, & $e_1e_3=e_4$, & $e_1e_4=e_5$, & $e_1e_5 = e_6$, & $e_3e_4 = e_6$;\\
 ${\mathbb A}_{31}$ & : &  
 $e_1e_2=e_3$, & $e_1e_3=e_4$, & $e_1e_4=e_5$, & $e_2e_3 = e_6$, & $e_2e_5 = e_6$, & $e_3e_4 = e_6$;\\
 ${\mathbb A}_{32}(\af)$ & : &  
 $e_1e_2=e_3$, & $e_1e_3=e_4$, & $e_1e_4=e_5$, & $e_2e_3 = \af e_6$, & $e_2e_5 = e_6$, & $e_4e_5 = e_6$;\\
 ${\mathbb A}_{33}$ & : &  
 $e_1e_2=e_3$, & $e_1e_3=e_4$, & $e_1e_4=e_5$, & $e_2e_3 = e_6$, & $e_4e_5 = e_6$;\\
 ${\mathbb A}_{34}$ & : &  
 $e_1e_2=e_3$, & $e_1e_3=e_4$, & $e_1e_4=e_5$, & $e_2e_4 = e_6$, & $e_2e_5 = -e_6$, & $e_3e_4 = e_6$;\\
 ${\mathbb A}_{35}(\af)$ & : &  
 $e_1e_2=e_3$, & $e_1e_3=e_4$, & $e_1e_4=e_5$, & $e_2e_4 = \af e_6$, & $e_2e_5 = e_6$, & $e_3e_5 = e_6$;\\
 ${\mathbb A}_{36}$ & : &  
 $e_1e_2=e_3$, & $e_1e_3=e_4$, & $e_1e_4=e_5$, & $e_2e_4 = e_6$, & $e_3e_5 = e_6$;\\
 ${\mathbb A}_{37}(\af)$ & : &  
 $e_1e_2=e_3$, & $e_1e_3=e_4$, & $e_1e_4=e_5$, & $e_2e_5 = \af e_6$, & $e_3e_4 = e_6$;\\
 ${\mathbb A}_{38}$ & : &  
 $e_1e_2=e_3$, & $e_1e_3=e_4$, & $e_1e_4=e_5$, & $e_3e_5 = e_6$;\\
 ${\mathbb A}_{39}$ & : &  
 $e_1e_2=e_3$, & $e_1e_3=e_4$, & $e_1e_4=e_5$, & $e_4e_5 = e_6$.
 \end{longtable}



\subsubsection{$1$-dimensional  central extensions of ${\mathcal A}_{09}$}
Let us use the notation
\[
\nabla_1=[\Delta_{14}], \nabla_2= [\Delta_{15}],  \nabla_3=[\Delta_{24}], \nabla_4=[\Delta_{25}],
\nabla_5=[\Delta_{34}], \nabla_6=[\Delta_{35}],  \nabla_7=[\Delta_{45}].\\
\]
The automorphism group of ${\mathcal A}_{09}$\ consists of the invertible
matrices of the form 
$$\phi=
\begin{pmatrix}
x& 0& 0& 0& 0\\
y& x^2& 0& 0& 0\\
z& t& x^3& 0& 0\\
p& q& xt& x^4& 0\\
r&s&-x^2z+xq+yt&x^3y+x^2t&x^5\end{pmatrix}.$$
Thus, $\det\phi=x^{15}\ne 0$. For $\0=\sum_{i=1}^7\af_i\nb i\in {\rm H}^2_{{\mathcal A}}({\mathcal A}_{09})$, we get $\phi\theta=\sum\limits_{i=1}^7 \af_i^*  \nb i$,
where
$$
\begin{array}{rcl}
\alpha_1^*&=&x^3 (q \alpha_5+s \alpha_6)+x^2 (t+x y) (x \alpha_2+y \alpha_4+z \alpha_6+p \alpha_7)-\\&&(q x+t y-x^2 z) (x^2 \alpha_4+t \alpha_6+q \alpha_7)+x^4 (x \alpha_1+y \alpha_3+z \alpha_5-r \alpha_7)-t x (x^2 \alpha_3+t \alpha_5-s \alpha_7),\\

\alpha_2^* &=&x^5 (x \alpha_2 + y \alpha_4 + z \alpha_6 + p \alpha_7),\\

\alpha_3^* &=& x^2 (t + x y) (x^2 \alpha_4 + t \alpha_6 + q \alpha_7) + 
 x^4 (x^2 \alpha_3 + t \alpha_5 - s \alpha_7),\\

\alpha_4^*&=& x^5 (x^2 \alpha_4 + t \alpha_6 + q \alpha_7), \\
\alpha_5^*&=&  x^3(x^4 \alpha_5 + t^2 \alpha_7 + x^2 (t \alpha_6 - q \alpha_7) + 
  x^3 (y \alpha_6 + z \alpha_7)),\\
\alpha_6^*&=&  x^6(x^2 \alpha_6 + t \alpha_7), \\   
\alpha_7^*&=&  x^9 \alpha_7.
\end{array}
$$
We are only interested in cocycles with $(\af_5,\af_6,\af_7)\ne(0,0,0)$.

\begin{enumerate}
    \item $\alpha_7\ne 0$. Then choosing $y=0$,
        $t=-\frac{\af_6x^2}{\af_7}$, 
        $q=\frac{x^2}{\af_7^2}(\af_6^2 - \af_4\af_7)$,
        $z=\frac x{\af_7^2}(\af_6^2 - \af_4\af_7 - \af_5\af_7)$,
        $s=\frac{x^2}{\af_7^2}(\af_3\af_7-\af_5\af_6)$,
        $p=\frac x{\af_7^3}(\af_4\af_6\af_7 + \af_5\af_6\af_7 - \af_2\af_7^2 -\af_6^3)$,
        $r=\frac x{\af_7^3}(\af_5\af_6^2 - 2\af_4\af_5\af_7 - \af_5^2\af_7 + \af_3\af_6\af_7 + \af_1\af_7^2)$,
        we have the representative 
        $\langle \nabla_7 \rangle$.

    \item $\alpha_7=0$ and $\af_6 \ne 0.$ Then choosing
    $q=0$,
    $t=-\frac{\af_4x^2}{\af_6}$,
    $y=\frac x{\af_6}(\af_4 - \af_5)$,
    $z=\frac x{\af_6^2}(\af_4\af_5 - \af_2\af_6-\af_4^2)$,
    $s=\frac{x^2}{\af_6^3}(2\af_4^2\af_5 - \af_4\af_5^2 - 2\af_3\af_4\af_6 + \af_2\af_5\af_6 + \af_3\af_5\af_6 -\af_1\af_6^2)$
    we have the representative $\la\as 3\nb 3+\as 6\nb 6\ra$, where
    \begin{align*}
    	\as 3 &= \frac{x^6}{\alpha_6} (\alpha_3\alpha_6-\alpha_4 \alpha_5),\\
    	\as 6 &= x^8 \alpha_6.
    \end{align*}
    Then we have two representatives $\langle \nabla_6 \rangle$ and $\langle  \nabla_3+ \nabla_6 \rangle$ depending on whether $\alpha_3\alpha_6-\alpha_4 \alpha_5=0$ or not.
    
    \item $\af_7=0, \af_6=0$ and $\af_5 \neq 0.$ 
    \begin{enumerate}
        \item $\af_5 \neq \pm\af_4$ and $\af_4 \neq 0$. Then choosing 
        $z=0$,
        $y=-\frac{\af_2x}{\af_4}$,
        $t=\frac{(\af_2 - \af_3)x^2}{\af_4 + \af_5}$,
        $q=\frac{x^2}{\af_4(\af_4 + \af_5)^2(\af_4 - \af_5)}(\af_2^2\af_4^2 - 3\af_2\af_3\af_4^2 + \af_3^2\af_4^2 + \af_1\af_4^3 - 2\af_2\af_3\af_4\af_5 + 2\af_1\af_4^2\af_5 - \af_2\af_3\af_5^2 + \af_1\af_4\af_5^2)$ we have the family of representatives  $\langle  \alpha \nabla_4+ \nabla_5 \rangle_{\alpha\not\in \{0,\pm 1\}}$ of distinct orbits.

        \item $\af_5 =\af_4$. Then choosing 
        $y=-\frac{\af_2x}{\af_4}$,
        $t=\frac{x^2}{2\af_4}(\af_2 - \af_3)$,
        $z=-\frac x{8\af_4^2}(\af_2^2 - 6\af_2\af_3 + \af_3^2 + 4\af_1\af_4)$,
        we have the  representative $\langle \nabla_4+ \nabla_5 \rangle$ which will be joined with the family $\langle \alpha \nabla_4+ \nabla_5 \rangle_{\alpha\not\in \{0,\pm 1\}}$.
  
    \item $\af_5 =-\af_4$. Then choosing
        $t=0$,
        $y=-\frac{\af_3x}{\af_4}$,
        $q=\frac {x^2}{2\af_4^2}(\af_1\af_4-\af_2\af_3)$  
         we have the  representative  $\la\as 2\nb 2+\as 4\nb 4+\as 5\nb 5\ra$, where
         \begin{align*}
         \as 2 &= (\af_2- \af_3)x^6,\\
         \as 4 &= \af_4x^7,\\
         \as 5 &= -\af_4x^7.
         \end{align*}
         So, we have two representatives $\la\nb 4-\nb 5\ra$ and $\la\nb 2+\nb 4-\nb 5\ra$ depending on whether $\af_2= \af_3$ or not. The representative $\la\nb 4-\nb 5\ra$ will be joined with the family $\langle \alpha \nabla_4+ \nabla_5 \rangle_{\alpha\not\in \{0,\pm 1\}}$.

  \item $\af_4=0$. Then choosing $y=z=0$,
        $t=-\frac{\af_3x^2}{\af_5}$,
        $q=\frac{x^2}{\af_5^2}(\af_2\af_3 - \af_1\af_5)$,
        we have the  representative $\la\as 2\nb 2+\as 5\nb 5\ra$, where
        \begin{align*}
        	\as 2 &= \af_2x^6,\\
        	\as 5 &= \af_5x^7.
        \end{align*} 
         So, we have two representatives $\langle  \nabla_5 \rangle$ and $\langle \nabla_2 + \nabla_5 \rangle$ depending on whether $\af_2=0$ or not. The representative $\langle  \nabla_5 \rangle$  will be joined with the family $\langle \alpha \nabla_4+ \nabla_5 \rangle_{\alpha\not\in \{0,\pm 1\}}$.
    \end{enumerate}
\end{enumerate}

Summarizing, we obtain the following representatives:
$\la\nb 2+\nb 4-\nb 5\ra$,  
$\langle \nabla_2 + \nabla_5 \rangle$, 
$\langle  \nabla_3+ \nabla_6 \rangle$, 
$\langle  \alpha \nabla_4+ \nabla_5 \rangle$, 
$\langle \nabla_6 \rangle$, 
$\langle \nabla_7 \rangle$.
All of these representatives belong to distinct orbits.
The corresponding algebras are:

\begin{longtable}{llllllllll} 
${\mathbb A}_{40}$ & : &  
$e_1e_2=e_3$, & $e_1e_3=e_4$, & $e_1e_4=e_5$, & $e_1e_5=e_6$, & $e_2e_3=e_5$, & $e_2e_5=e_6$, & $e_3e_4=-e_6$;\\
${\mathbb A}_{41}$ & : &  
$e_1e_2=e_3$, & $e_1e_3=e_4$, & $e_1e_4=e_5$, & $e_1e_5=e_6$, & $e_2e_3=e_5$, & $e_3e_4=e_6$;\\
${\mathbb A}_{42}$ & : &  
$e_1e_2=e_3$, & $e_1e_3=e_4$, & $e_1e_4=e_5$, & $e_2e_3=e_5$, & $e_2e_4=e_6$, & $e_3e_5=e_6$;\\
${\mathbb A}_{43}(\af)$ & : &  
$e_1e_2=e_3$, & $e_1e_3=e_4$, & $e_1e_4=e_5$, & $e_2e_3=e_5$, & $e_2e_5=\af e_6$, & $e_3e_4=e_6$;\\
${\mathbb A}_{44}$ & : &  
$e_1e_2=e_3$, & $e_1e_3=e_4$, & $e_1e_4=e_5$, & $e_2e_3=e_5$, & $e_3e_5=e_6$;\\
${\mathbb A}_{45}$ & : &  
$e_1e_2=e_3$, & $e_1e_3=e_4$, & $e_1e_4=e_5$, & $e_2e_3=e_5$, & $e_4e_5=e_6$.
\end{longtable}



\subsubsection{$1$-dimensional  central extensions of ${\mathcal A}_{10}$}
Let us use the notation
\[\nabla_1=[\Delta_{14}], \nabla_2= [\Delta_{23}],  \nabla_3=[\Delta_{15}]+[\Delta_{34}],
\nabla_4= [\Delta_{15}], \nabla_5=[\Delta_{25}], \nabla_6=[\Delta_{35}],  \nabla_7=[\Delta_{45}].
\]
The automorphism group of ${\mathcal A}_{10}$\ consists of the invertible
matrices of the form 
$$\phi=
\begin{pmatrix}
x& 0& 0& 0& 0\\
0& y& 0& 0& 0\\
z& 0& xy& 0& 0\\
p&q& 0& x^2y& 0\\
r& h& -yp& 0& y^2x^2
\end{pmatrix}.$$
Thus, $\det\phi=x^6y^5\ne 0$. For $\0=\sum_{i=1}^7\af_i\nb i\in {\rm H}^2_{{\mathcal A}}({\mathcal A}_{10})$, we get $\phi\theta=\sum\limits_{i=1}^7 \af_i^*  \nb i$,
%
where
$$\begin{array}{rcl}
\alpha^*_1 &=&x^2 y (x \af_1 + z \af_3 - r \af_7),\\
\alpha^*_2 &=&x y (y \af_2 - q \af_3 - h \af_6) - p y (y \af_5 + q \af_7),\\
\alpha^*_3 &=&x^2 y^2 (x \af_3 + p \af_7),\\
\alpha^*_4 &=&x^2 y^2 (x \af_4 + z \af_6),\\
\alpha^*_5 &=&x^2 y^2 (y \af_5 + q \af_7),\\
\alpha^*_6 &=&x^3 y^3 \af_6,\\
\alpha^*_7 &=&x^4 y^3 \af_7.
\end{array}$$
We are only interested in cocycles with $(\alpha_4,\alpha_5, \af_6, \af_7) \neq (0,0,0,0).$

\begin{enumerate}
    \item $\alpha_7 \neq 0$ and $\af_6\neq 0$.
    Then choosing 
    $x= \frac{\af_6}{\af_7},$ 
    $z= -\frac{x \af_4}{\af_6},$ 
    $p=-\frac{x \af_3}{\af_7},$ 
    $h= \frac{y (\af_3 \af_5 +  \af_2 \af_7)}{\af_6 \af_7},$
    $r=\frac{x \af_1+z \af_3}{\af_7},$ 
    $q=-\frac{y \af_5}{\af_7}$ we have the representative 
    $ \langle  \nabla_6 +  \nabla_7\rangle.$

\item $\af_7\neq 0$ and $\af_6=0.$ Then by choosing 
$q = -\frac{y \af_5}{\af_7},$
$p = -\frac{x \af_3}{\af_7},$
$r = \frac{x \af_1 + z \af_3}{\af_7},$ we have the representative $\la\as 2\nb 2+\as 4\nb 4+\as 7\nb 7\ra$, where
\begin{align*}
	\as 2 &= \frac{x y^2}{\af_7} ( \af_2\af_7+ \af_3 \af_5),\\
	\as 4 &= x^3 y^2 \af_4,\\
	\as 7 &= x^4 y^3 \af_7.
\end{align*}

\begin{enumerate}
    \item $\af_2\af_7+ \af_3 \af_5 \neq 0$ and $\af_4 \neq 0$. Then we have the representative $\langle \nabla_2+\nabla_4+\nabla_7 \rangle.$

    \item $\af_2\af_7+ \af_3 \af_5 =0$ and $\af_4 \neq 0$. Then we have the representative $\langle \nabla_4+\nabla_7 \rangle.$

    \item $\af_2\af_7+ \af_3 \af_5 \neq 0$ and $\af_4 = 0$. Then we have the representative $\langle \nabla_2+\nabla_7 \rangle.$

    \item $\af_2\af_7+ \af_3 \af_5 = 0$ and $\af_4 = 0$. Then we have the representative $\langle \nabla_7 \rangle.$

\end{enumerate}

\item $\af_7 = 0$ and $\af_6\neq 0.$ 
Then choosing 
$z = - \frac{x \af_4}{\af_6},$
$h = \frac{x y \af_2 - q x \af_3 - p y \af_5}{x \af_6},$
we have the representative $\la\as 1\nb 1+\as 2\nb 2+\as 3\nb 3+\as 5\nb 5+\as 6\nb 6\ra$, where
\begin{align*}
	\as 1 &= \frac{x^3 y}{\af_6} (\af_1\af_6- \af_3 \af_4),\\
	\as 2 &= (qx\af_3 + py\af_5 - xy\af_2)(x - 1)y,\\
	\as 3 &= x^3 y^2 \af_3,\\
	\as 5 &= x^2 y^3 \af_5,\\
	\as 6 &= x^3 y^3 \af_6.
\end{align*}

\begin{enumerate}

    \item $\af_1\af_6-\af_3 \af_4=0$, $\af_3=0 $ and $\af_5=0$. Then choosing $x=1$ we have the representative  $\langle \nabla_6 \rangle.$

    \item $\af_1\af_6-\af_3 \af_4=0$, $\af_3=0 $ and $\af_5\neq 0$. Then choosing $p=\frac{\af_2}{\af_6}$ and $x=\frac{\af_5}{\af_6}$ we have the representative  $\langle \nabla_5+\nabla_6 \rangle.$

    \item $\af_1\af_6-\af_3 \af_4=0$, $\af_3\neq 0 $ and $\af_5= 0$. Then choosing $q=\frac{\af_2}{\af_6}$ and $y=\frac{\af_3}{\af_6}$ we have the representative  $\langle \nabla_3 + \nabla_6 \rangle.$

    \item $\af_1\af_6-\af_3 \af_4=0$, $\af_3\neq 0 $ and $\af_5\neq 0$. Then choosing $p=\frac{\af_2-q\af_6}{\af_6}$, $x=\frac{\af_5}{\af_6}$ and $y=\frac{\af_3}{\af_6}$ we have the representative  $\langle \nabla_3 +\nabla_5 + \nabla_6 \rangle.$
 
    \item $\af_1\af_6-\af_3 \af_4\ne 0$, $\af_3=0$ and $\af_5=0$. Then choosing $x=1$ and $y=\sqrt{\frac{\af_1}{\af_6}}$ we have the representative  $\langle \nabla_1+\nabla_6 \rangle.$

    \item $\af_1\af_6-\af_3 \af_4\ne 0$, $\af_3=0 $ and $\af_5\neq 0$. Then choosing $p=\frac{\af_2}{\af_6}$, $x=\frac{\af_5}{\af_6}$ and $y=\sqrt{\frac{\af_1}{\af_6}}$ we have the representative  $\langle \nabla_1+\nabla_5+\nabla_6 \rangle.$

    \item $\af_1\af_6-\af_3 \af_4\ne 0$, $\af_3\neq 0$ and $\af_5= 0$. Then choosing $q=\frac{\af_2}{\af_6}$ and $y=\frac{\af_3}{\af_6}$ we have the family of representatives $\langle \af\nabla_1+\nabla_3 + \nabla_6 \rangle_{\af\ne 0}$ of distinct orbits. We will join $\langle \nabla_3 + \nabla_6 \rangle$ with this family.

    \item $\af_1\af_6-\af_3 \af_4\ne 0$, $\af_3\neq 0$ and $\af_5\neq 0$. Then choosing $p=\frac{\af_2-q\af_6}{\af_6}$, $x=\frac{\af_5}{\af_6}$ and $y=\frac{\af_3}{\af_6}$ we have the family of representatives  $\langle \af \nabla_1+\nabla_3 +\nabla_5 + \nabla_6 \rangle_{\alpha \neq 0}$ of distinct orbits. We will join $\langle \nabla_3 +\nabla_5 + \nabla_6 \rangle$ with this family.

\end{enumerate}

\item $\af_7=0, \af_6 = 0, \af_5\neq0.$ 
Then choosing $p =\frac{x (y \af_2 - q \af_3)}{y \af_5},$ we have the representative $\la\as 1\nb 1+\as 3\nb 3+\as 4\nb 4+\as 5\nb 5\ra$, where
\begin{align*}
	\as 1 &= x^2 y (x \af_1+z \af_3),\\
	\as 3 &= x^3 y^2 \af_3,\\
	\as 4 &= x^3 y^2 \af_4,\\
	\as 5 &= x^2 y^3 \af_5.
\end{align*} 
\begin{enumerate}
    \item $\af_3\neq 0$ and $\af_4 = 0$. Then we have the representative  $\langle \nabla_3 +  \nabla_5 \rangle.$
    \item $\af_3\neq 0$ and $\af_4 \neq 0$. Then we have the family of representatives  $\langle \af \nabla_3 +  \nabla_4+ \nabla_5 \rangle_{\af \neq 0}.$
    \item $\af_3= 0$, $\af_4 = 0$ and $\af_1=0$. Then we have the representative  $\langle   \nabla_5 \rangle.$
    \item $\af_3= 0$, $\af_4 = 0$ and $\af_1\neq 0$. Then we have the representative  $\langle  \nabla_1+ \nabla_5 \rangle.$
 	\item $\af_3= 0$, $\af_4 \neq  0$ and $\af_1=0$. Then we have the  representative  $\langle \nabla_4+ \nabla_5 \rangle$ which will be joined with the family $\langle \af \nabla_3 +  \nabla_4+ \nabla_5 \rangle_{\af \neq 0}$.
 	\item $\af_3= 0$, $\af_4 \neq  0$ and $\af_1\neq 0$. Then we have the  representative  $\langle \nabla_1+\nabla_4+ \nabla_5 \rangle.$

\end{enumerate}

\item $\af_7=0$, $\af_6=0$, $\af_5=0$ and $\af_4 \neq0$. Then we have the representative $\la\as 1\nb 1+\as 2\nb 2+\as 3\nb 3+\as 4\nb 4\ra$, where
\begin{align*}
	\as 1 &= x^2 y (x \af_1+z \af_3),\\
	\as 2 &= x y (y \af_2-q \af_3),\\
	\as 3 &= x^3 y^2 \af_3,\\
	\as 4 &= x^3 y^2 \af_4.
\end{align*}

\begin{enumerate}
    \item $\af_3=0$, $\af_1=0$ and $\af_2=0$. Then we have the representative $\langle \nabla_4 \rangle.$
    \item $\af_3=0$, $\af_1\neq 0$ and $\af_2=0$. Then we have the representative $\langle \nabla_1+\nabla_4 \rangle.$
    \item $\af_3=0$, $\af_1= 0$ and $\af_2\neq 0$. Then we have the representative $\langle \nabla_2+\nabla_4 \rangle.$
    \item $\af_3=0$, $\af_1\neq 0$ nd $\af_2\neq 0$. Then we have the representative $\langle \nabla_1+\nabla_2+\nabla_4 \rangle.$
    \item $\af_3\neq 0$. Then we have the family of representatives $\langle \af\nabla_3+\nabla_4 \rangle_{\af\neq 0}$ of distinct orbits. We will join $\langle \nabla_4 \rangle$ with this family.

\end{enumerate}

\end{enumerate}

Summarizing, we obtain the following representatives:
$\langle \nabla_1+\nabla_2+\nabla_4 \rangle$,
$\langle \af \nabla_1+\nabla_3 +\nabla_5 + \nabla_6 \rangle$,
$\langle \af\nabla_1+\nabla_3 + \nabla_6 \rangle$, 
$\langle \nabla_1+\nabla_4 \rangle$,
$\langle \nabla_1+\nabla_4+ \nabla_5 \rangle$,
$\langle \nabla_1+ \nabla_5 \rangle$,
$\langle \nabla_1+\nabla_5+\nabla_6 \rangle$, 
$\langle \nabla_1+\nabla_6 \rangle$,
$\langle \nabla_2+\nabla_4 \rangle$,
$\langle \nabla_2+\nabla_4+\nabla_7 \rangle$,
$\langle \nabla_2+\nabla_7 \rangle$,
$\langle \af\nabla_3+\nabla_4 \rangle$,
$\langle \af \nabla_3 +  \nabla_4+ \nabla_5 \rangle$,
$\langle \nabla_3 +  \nabla_5 \rangle$,
$\langle \nabla_4+\nabla_7 \rangle$,
$\langle \nabla_5 \rangle$,
$\langle \nabla_5+\nabla_6 \rangle$, 
$\langle \nabla_6 \rangle$,
$\langle  \nabla_6 +  \nabla_7\rangle$,
$\langle \nabla_7 \rangle$.
All of them belong to distinct orbits.
The corresponding algebras are:

\begin{longtable}{llllllllll} 
${\mathbb A}_{46}$ & : &  
$e_1e_2=e_3$, & $e_1e_3=e_4$, & $e_1e_4=e_6$, & $e_1e_5=e_6$, & $e_2e_3=e_6$, & $e_2e_4=e_5$;\\
${\mathbb A}_{47}(\af)$ & : &  
$e_1e_2=e_3$, & $e_1e_3=e_4$, & $e_1e_4=\af e_6$, & $e_1e_5=e_6$, & $e_2e_4=e_5$, & $e_2e_5=e_6$, & $e_3e_4=e_6$, & $e_3e_5=e_6$;\\
${\mathbb A}_{48}(\af)$ & : &  
$e_1e_2=e_3$, & $e_1e_3=e_4$, & $e_1e_4=\af e_6$, & $e_1e_5=e_6$, & $e_2e_4=e_5$, & $e_3e_4=e_6$, & $e_3e_5=e_6$;\\
${\mathbb A}_{49}$ & : &  
$e_1e_2=e_3$, & $e_1e_3=e_4$, & $e_1e_4=e_6$, & $e_1e_5=e_6$, & $e_2e_4=e_5$;\\
${\mathbb A}_{50}$ & : &  
$e_1e_2=e_3$, & $e_1e_3=e_4$, & $e_1e_4=e_6$, & $e_1e_5=e_6$, & $e_2e_4=e_5$, & $e_2e_5=e_6$;\\
${\mathbb A}_{51}$ & : &  
$e_1e_2=e_3$, & $e_1e_3=e_4$, & $e_1e_4=e_6$, & $e_2e_4=e_5$, & $e_2e_5=e_6$;\\
${\mathbb A}_{52}$ & : &  
$e_1e_2=e_3$, & $e_1e_3=e_4$, & $e_1e_4=e_6$, & $e_2e_4=e_5$, & $e_2e_5=e_6$, & $e_3e_5=e_6$;\\
${\mathbb A}_{53}$ & : &  
$e_1e_2=e_3$, & $e_1e_3=e_4$, & $e_1e_4=e_6$, & $e_2e_4=e_5$, & $e_3e_5=e_6$;\\
${\mathbb A}_{54}$ & : &  
$e_1e_2=e_3$, & $e_1e_3=e_4$, & $e_1e_5=e_6$, & $e_2e_3=e_6$, & $e_2e_4=e_5$;\\
${\mathbb A}_{55}$ & : &  
$e_1e_2=e_3$, & $e_1e_3=e_4$, & $e_1e_5=e_6$, & $e_2e_3=e_6$, & $e_2e_4=e_5$, & $e_4e_5=e_6$;\\
${\mathbb A}_{56}$ & : &  
$e_1e_2=e_3$, & $e_1e_3=e_4$, & $e_2e_3=e_6$, & $e_2e_4=e_5$, & $e_4e_5=e_6$;\\
${\mathbb A}_{57}(\af)$ & : &  
$e_1e_2=e_3$, & $e_1e_3=e_4$, & $e_1e_5=(\af+1)e_6$, & $e_2e_4=e_5$, & $e_3e_4=\af e_6$;\\
${\mathbb A}_{58}(\af)$ & : &  
$e_1e_2=e_3$, & $e_1e_3=e_4$, & $e_1e_5=(\af+1)e_6$, & $e_2e_4=e_5$, & $e_2e_5=e_6$, & $e_3e_4=\af e_6$;\\
${\mathbb A}_{59}$ & : &  
$e_1e_2=e_3$, & $e_1e_3=e_4$, & $e_1e_5=e_6$, & $e_2e_4=e_5$, & $e_2e_5=e_6$, & $e_3e_4=e_6$;\\
${\mathbb A}_{60}$ & : &  
$e_1e_2=e_3$, & $e_1e_3=e_4$, & $e_1e_5=e_6$, & $e_2e_4=e_5$, & $e_4e_5=e_6$;\\
${\mathbb A}_{61}$ & : &  
$e_1e_2=e_3$, & $e_1e_3=e_4$, & $e_2e_4=e_5$, & $e_2e_5=e_6$;\\
${\mathbb A}_{62}$ & : &  
$e_1e_2=e_3$, & $e_1e_3=e_4$, & $e_2e_4=e_5$, & $e_2e_5=e_6$, & $e_3e_5=e_6$;\\
${\mathbb A}_{63}$ & : &  
$e_1e_2=e_3$, & $e_1e_3=e_4$, & $e_2e_4=e_5$, & $e_3e_5=e_6$;\\
${\mathbb A}_{64}$ & : &  
$e_1e_2=e_3$, & $e_1e_3=e_4$, & $e_2e_4=e_5$, & $e_3e_5=e_6$, & $e_4e_5=e_6$;\\
${\mathbb A}_{65}$ & : &  
$e_1e_2=e_3$, & $e_1e_3=e_4$, & $e_2e_4=e_5$, & $e_4e_5=e_6$.
\end{longtable}

\subsubsection{$1$-dimensional  central extensions of ${\mathcal A}_{11}$}
Let us use the notation
\[
\nabla_1= [\Delta_{14}], \nabla_2=[\Delta_{23}], \nabla_3 = [\Delta_{24}], \nabla_4=[\Delta_{15}], \nabla_5 = [\Delta_{25}], \nabla_6= [\Delta_{35}], \nabla_7=[\Delta_{45}]. 
\]

The automorphism group of ${\mathcal A}_{11}$\ consists of the invertible
matrices of the form 
$$\phi=
\begin{pmatrix}
x&  0&   0&   0&         0\\
z&  y&   0&   0&         0\\
0&  0&  xy&   0&         0\\
u&  0&   0&  x^2y&       0\\
w&  v&   0& -xyu&   x^3y^2\\
\end{pmatrix}.$$
So, we must have $\det\phi=x^7y^5\ne 0$. For $\0=\sum_{i=1}^7\af_i\nb i\in {\rm H}^2_{{\mathcal A}}({\mathcal A}_{11})$, we get $\phi\theta=\sum\limits_{i=1}^7 \af_i^*  \nb i$,
%
where
$$\begin{array}{rcl}

\alpha^*_1 &=& x y (-u (x \af_4 + z \af_5 + u \af_7) +     x (x \af_1 + z \af_3 - w \af_7)), \\
\alpha^*_2 &=& x y (y \af_2 - v \af_6), \\
\alpha^*_3 &=& x y (x y \af_3 - u y \af_5 - v x \af_7), \\
\alpha^*_4 &=& x^3 y^2 (x \af_4 + z \af_5 + u \af_7), \\
\alpha^*_5 &=& x^3 y^3 \af_5, \\
\alpha^*_6 &=& x^4 y^3 \af_6, \\
\alpha^*_7 &=& x^5 y^3 \af_7.
\end{array}$$
We are only interested  in cocycles with $(\af_4, \af_5, \af_6, \af_7) \neq (0,0,0,0)$, since otherwise we would obtain an algebra with an annihilator of dimension greater than one.

\begin{enumerate}
    \item $\af_7\neq 0.$ Then choosing 
    $u = -\frac 1{\af_7}(x \af_4 + z \af_5), $
    $w = \frac 1{\af_7}(x \af_1 + z \af_3), $
    $v = \frac y{x \af_7^2} (x \af_4 \af_5 + z \af_5^2 + x \af_3 \af_7),$ we have the representative $\la\as 2\nb 2+\af^*_5\nb 5+\af^*_6\nb 6+\af^*_7\nb 7\ra$, where
    \begin{align*}
    	\as 2 = \frac{y^2}{\af_7^2} (x \af_2\af_7^2 - \af_6 (x \af_4 \af_5 + z \af_5^2 + x \af_3 \af_7)).
    \end{align*}

    \begin{enumerate}
        \item $\af_5\neq 0$ and $\af_6 \neq 0$. Then choosing $x=\frac{\af_6}{\af_7}$ and $z=\frac 1{\af_5^2\af_7}(\af_2\af_7^2-\af_4\af_5\af_6 - \af_3\af_6\af_7)$ we have the family of representatives $\langle \af \nabla_5 +\nabla_6+\nabla_7 \rangle_{\af \ne 0}$ of distinct orbits.
        
        \item $\af_5= 0$, $\af_6=0$ and $\af_2=0$. Then we have the representative $\langle \nabla_7 \rangle.$

        \item $\af_5= 0$, $\af_6=0$ and $\af_2\neq 0$. Then we have the representative $\langle \nabla_2+ \nabla_7 \rangle.$

        \item $\af_5\neq 0$, $\af_6=0$ and $\af_2=0$. Then we have the representative $\langle \nabla_5+ \nabla_7 \rangle.$

        \item $\af_5\neq 0$, $\af_6=0$ and $\af_2\ne 0$. Then we have the representative $\langle \nabla_2+ \nabla_5+ \nabla_7 \rangle.$

        \item $\af_5=0$, $\af_6\neq 0$ and $\af_2\af_7-\af_3\af_6=0$. Then we have the representative $\langle   \nabla_6+\nabla_7 \rangle$ which will be joined with the family $\langle \af \nabla_5 +\nabla_6+\nabla_7 \rangle_{\af \ne 0}$. 
        \item $\af_5=0$, $\af_6\neq 0$ and $\af_2\af_7-\af_3\af_6\ne 0$. Then we have the representative $\langle  \nabla_2+\nabla_6 +\nabla_7 \rangle.$    
    \end{enumerate}
    
    \item $\af_7=0$, $\af_6\neq 0$ and $\af_5 \neq 0.$
    Then choosing 
    $v = \frac{y \af_2}{\af_6},$
    $z = -\frac{x \af_4}{\af_5},$
    $u = \frac{x \af_3}{\af_5},$  we have the representative $\la\as 1\nb 1+\af^*_5\nb 5+\af^*_6\nb 6\ra$, where
    \begin{align*}
    	\as 1 = \frac{x^3 y}{\af_5}( \af_1\af_5- \af_3 \af_4).
    \end{align*}
    
    \begin{enumerate}
    \item $\af_1\af_5-\af_3\af_4=0$. Then we have the representative $\langle \nabla_5+\nabla_6 \rangle.$
    \item $\af_1\af_5-\af_3\af_4\ne 0$. Then we have the representative $\langle \nabla_1+\nabla_5+\nabla_6 \rangle.$

    \end{enumerate}

     \item $\af_7=0$, $\af_6\neq 0$ and $\af_5 = 0.$
    Then choosing 
    $v =\frac{y \af_2}{\af_6},$  we have the representative $\la\as 1\nb 1+\as 3\nb 3+\as 4\nb 4+\af^*_6\nb 6\ra$, where
    \begin{align*}
    	\as 1 &= x^2 y (x \af_1+z \af_3-u\af_4),\\
    	\as 3 &= x^2 y^2 \af_3,\\
    	\as 4 &= x^4 y^2 \af_4.
    \end{align*}
    
    \begin{enumerate}
    \item $\af_3 \neq 0$ and $\af_4\neq 0$. Then choosing $x=\sqrt{\frac{\af_3}{\af_4}}$, $y=\frac{\af_4}{\af_6}$ and $u=\frac{x \af_1+z \af_3}{\af_4}$ we have the representative $\langle \nabla_3+\nabla_4+\nabla_6 \rangle.$
    \item $\af_3 \neq 0$ and $\af_4= 0$. Then we have the representative $\langle \nabla_3 +\nabla_6 \rangle.$
    \item $\af_3 = 0 $ and $\af_4\neq 0$. Then we have the representative $\langle \nabla_4+\nabla_6 \rangle.$
    \item $\af_3 = 0$ and $\af_4= 0$. Then we have two representatives $\la\nb 6\ra$ and $\langle \nabla_1+\nabla_6 \rangle$ depending on whether $\af_1=0$ or not.

    \end{enumerate}    
   
  \item $\af_7=0$, $\af_6=0$ and $\af_5 \neq 0.$ Then choosing 
  $z = -\frac{x \af_4}{\af_5},$
  $u = \frac{x \af_3}{\af_5}$
    we have the representative $\la\as 1\nb 1+\as 2\nb 2+\af^*_5\nb 5\ra$, where
    \begin{align*}
    	\as 1 &= \frac{x^3 y}{\af_5}( \af_1\af_5- \af_3 \af_4),\\
    	\as 2 &= x y^2 \af_2.
    \end{align*}
    
    \begin{enumerate}
        \item $\af_1\af_5 - \af_3 \af_4=0$ and $\af_2=0$. Then we have the representative $\langle \nabla_5 \rangle.$

        \item $\af_1\af_5 - \af_3 \af_4=0$ and $\af_2 \neq0$. Then we have the representative $\langle \nabla_2+ \nabla_5 \rangle.$
        \item $\af_1\af_5 -  \af_3 \af_4\ne 0$ and $\af_2 =0$. Then we have the representative $\langle \nabla_1+ \nabla_5 \rangle.$
        \item $\af_1\af_5 - \af_3 \af_4\ne 0$ and $\af_2 \neq 0$. Then we have the representative $\langle \nabla_1+\nabla_2+ \nabla_5 \rangle.$

    \end{enumerate}

  \item $\af_7=0$, $\af_6=0$, $\af_5 = 0$ and $\af_4\neq 0.$ Then choosing 
  $u= \frac{x \af_1 + z \af_3}{\af_4},$ we have the representative $\la\as 2\nb 2+\as 3\nb 3+\as 4\nb 4\ra$, where
  \begin{align*}
  	\as 2 &= x y^2  \af_2,\\
  	\as 3 &= x^2 y^2 \af_3,\\
  	\as 4 &= x^4 y^2  \af_4.
  \end{align*}
 \begin{enumerate}
     \item $\af_2=0$ and $\af_3=0$. Then we have the representative $\langle \nabla_4 \rangle.$
     \item $\af_2\neq 0$ nd $\af_3=0$. Then we have the representative $\langle \nabla_2+ \nabla_4 \rangle.$
     \item $\af_2= 0$ and $\af_3 \neq 0$. Then we have the representative $\langle \nabla_3+ \nabla_4 \rangle.$
     \item $\af_2\neq 0$ and $\af_3\neq 0$. Then we have the family of representatives $\langle \af\nabla_2+ \nabla_3+\nabla_4 \rangle_{\af\ne 0}$ of distinct orbits. We will adjoin $\langle \nabla_3+ \nabla_4 \rangle$ to this family.
 \end{enumerate}   

\end{enumerate}

Summarizing, we obtain the following representatives:
$\langle \nabla_1+\nabla_2+ \nabla_5 \rangle$,
$\langle \nabla_1+ \nabla_5 \rangle$,
$\langle \nabla_1+\nabla_5+\nabla_6 \rangle$,
$\langle \nabla_1+\nabla_6 \rangle$,
$\langle \af\nabla_2+ \nabla_3+\nabla_4 \rangle$,
$\langle \nabla_2+ \nabla_4 \rangle$,
$\langle \nabla_2+ \nabla_5 \rangle$,
$\langle \nabla_2+ \nabla_5+ \nabla_7 \rangle$,
$\langle  \nabla_2+\nabla_6 +\nabla_7 \rangle$,
$\langle \nabla_2+ \nabla_7 \rangle$,
$\langle \nabla_3+\nabla_4+\nabla_6 \rangle$,
$\langle \nabla_3 +\nabla_6 \rangle$,
$\langle \nabla_4 \rangle$,
$\langle \nabla_4+\nabla_6 \rangle$,
$\langle \nabla_5 \rangle$,
$\langle \nabla_5+\nabla_6 \rangle$,
$\langle \af \nabla_5 +\nabla_6+\nabla_7 \rangle$,
$\langle \nabla_5+ \nabla_7 \rangle$,
$\la\nb 6\ra$,
$\langle \nabla_7 \rangle$.
All of them belong to distinct orbits.
The corresponding algebras are:

\begin{longtable}{llllllllll} 
${\mathbb A}_{66}$ & : &  
$e_1e_2=e_3$, & $e_1e_3=e_4$,& $e_1e_4=e_6$, & $e_2e_3=e_6$, & $e_2e_5=e_6$, & $e_3e_4=e_5$;\\
${\mathbb A}_{67}$ & : &  
$e_1e_2=e_3$, & $e_1e_3=e_4$,& $e_1e_4=e_6$, & $e_2e_5=e_6$, & $e_3e_4=e_5$;\\
${\mathbb A}_{68}$ & : &  
$e_1e_2=e_3$, & $e_1e_3=e_4$,& $e_1e_4=e_6$, & $e_2e_5=e_6$, & $e_3e_4=e_5$, & $e_3e_5=e_6$;\\
${\mathbb A}_{69}$ & : &  
$e_1e_2=e_3$, & $e_1e_3=e_4$,& $e_1e_4=e_6$, & $e_3e_4=e_5$, & $e_3e_5=e_6$;\\
${\mathbb A}_{70}(\af)$ & : &  
$e_1e_2=e_3$, & $e_1e_3=e_4$, & $e_1e_5=e_6$, & $e_2e_3=\af e_6$, & $e_2e_4=e_6$, & $e_3e_4=e_5$;\\
${\mathbb A}_{71}$ & : &  
$e_1e_2=e_3$, & $e_1e_3=e_4$, & $e_1e_5=e_6$, & $e_2e_3= e_6$, & $e_3e_4=e_5$;\\
${\mathbb A}_{72}$ & : &  
$e_1e_2=e_3$, & $e_1e_3=e_4$, & $e_2e_3= e_6$, & $e_2e_5=e_6$, & $e_3e_4=e_5$;\\
${\mathbb A}_{73}$ & : &  
$e_1e_2=e_3$, & $e_1e_3=e_4$, & $e_2e_3= e_6$, & $e_2e_5=e_6$, & $e_3e_4=e_5$, & $e_4e_5=e_6$;\\
${\mathbb A}_{74}$ & : &  
$e_1e_2=e_3$, & $e_1e_3=e_4$, & $e_2e_3= e_6$, & $e_3e_4=e_5$, & $e_3e_5=e_6$, & $e_4e_5=e_6$;\\
${\mathbb A}_{75}$ & : &  
$e_1e_2=e_3$, & $e_1e_3=e_4$, & $e_2e_3= e_6$, & $e_3e_4=e_5$, & $e_4e_5=e_6$;\\
${\mathbb A}_{76}$ & : &  
$e_1e_2=e_3$, & $e_1e_3=e_4$, & $e_1e_5=e_6$, & $e_2e_4= e_6$, & $e_3e_4=e_5$, & $e_3e_5=e_6$;\\
${\mathbb A}_{77}$ & : &  
$e_1e_2=e_3$, & $e_1e_3=e_4$, & $e_2e_4= e_6$, & $e_3e_4=e_5$, & $e_3e_5=e_6$;\\
${\mathbb A}_{78}$ & : &  
$e_1e_2=e_3$, & $e_1e_3=e_4$, & $e_1e_5=e_6$, & $e_3e_4=e_5$;\\
${\mathbb A}_{79}$ & : &  
$e_1e_2=e_3$, & $e_1e_3=e_4$, & $e_1e_5=e_6$, & $e_3e_4=e_5$, & $e_3e_5=e_6$;\\
${\mathbb A}_{80}$ & : &  
$e_1e_2=e_3$, & $e_1e_3=e_4$, & $e_2e_5=e_6$, & $e_3e_4=e_5$;\\
${\mathbb A}_{81}$ & : &  
$e_1e_2=e_3$, & $e_1e_3=e_4$, & $e_2e_5=e_6$, & $e_3e_4=e_5$, & $e_3e_5=e_6$;\\
${\mathbb A}_{82}(\af)$ & : &  
$e_1e_2=e_3$, & $e_1e_3=e_4$, & $e_2e_5=\af e_6$, & $e_3e_4=e_5$, & $e_3e_5=e_6$, & $e_4e_5=e_6$;\\
${\mathbb A}_{83}$ & : &  
$e_1e_2=e_3$, & $e_1e_3=e_4$, & $e_2e_5=e_6$, & $e_3e_4=e_5$, & $e_4e_5=e_6$;\\
${\mathbb A}_{84}$ & : &  
$e_1e_2=e_3$, & $e_1e_3=e_4$, & $e_3e_4=e_5$, & $e_3e_5=e_6$;\\
${\mathbb A}_{85}$ & : &  
$e_1e_2=e_3$, & $e_1e_3=e_4$, & $e_3e_4=e_5$, & $e_4e_5=e_6$.
\end{longtable}

All of the above, combined with the algebraic classification of $6$-dimensional nilpotent Tortkara and Malcev algebras in \cite{gkk} and \cite{hac18}, respectively, yields our main result of this section.

\begin{theorem}
The distinct isomorphism classes of $6$-dimensional complex nilpotent anticommutative algebras are given explicitly in Appendix~\ref{appb}. The list is comprised of $14$ one-parameter families and $130$ additional isomorphism classes.
\end{theorem}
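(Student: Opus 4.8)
The statement is the culmination of the case analysis carried out in this section, so the plan is to assemble those computations into a single exhaustive list and verify that it has no repetitions. I would organize the argument around the Skjelbred--Sund reduction \cite{ss78,hac16}: every $6$-dimensional nilpotent anticommutative algebra ${\bf B}$ has ${\rm Ann}({\bf B})\ne 0$ and, by \cite[Lemma 5]{hac16}, can be written as ${\bf A}_\theta$ with $\dim {\bf A}=6-\dim{\rm Ann}({\bf B})$ and $\theta^\bot\cap{\rm Ann}({\bf A})=0$; moreover the parent ${\bf A}$ occurring in this minimal form is an isomorphism invariant of ${\bf B}$. Two further observations streamline the bookkeeping: (i) a quotient of a Tortkara (respectively Malcev) algebra is again Tortkara (respectively Malcev), so a non-Tortkara ${\bf B}$ can only be a central extension of a non-Tortkara algebra; and (ii) being Tortkara, and being Malcev, are isomorphism-invariant properties, so the three lists ${\mathbb A}_j$, ${\mathbb M}_j$, ${\mathbb T}_j$ are automatically pairwise disjoint. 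Consequently it suffices to produce all non-Tortkara $6$-dimensional nilpotent anticommutative algebras and then adjoin the $6$-dimensional nilpotent Tortkara algebras, split according to \cite{gkk} into the Malcev ones ${\mathbb M}_j$ \cite{hac18} and the non-Malcev ones ${\mathbb T}_j$.

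For the non-Tortkara part I would stratify by $d:=\dim{\rm Ann}({\bf B})$, recalling that $d\ge 3$ forces ${\bf B}$ to be split and that split algebras have $d\ge 2$. If $d\ge 2$ and ${\bf B}$ is split, then ${\bf B}$ is a trivial algebra adjoined to a non-Tortkara nilpotent anticommutative algebra of smaller dimension; by \cite{cfk182,fkkv} the only such algebra is ${\mathbb A}_{00}$. If $d=2$ and ${\bf B}$ is non-split, then ${\bf B}={\bf A}_\theta$ with ${\bf A}$ a $4$-dimensional nilpotent anticommutative algebra and $s=2$; by \cite{cfk182,gkks} the only $4$-dimensional parent admitting non-Tortkara cocycles is ${\bf A}_{02}$, and the one-to-one correspondence (established before the theorem) between ${\rm Aut}({\bf A}_{02})$-orbits on ${\bf T}_2({\bf A}_{02})$ and non-split central extensions with $2$-dimensional annihilator yields exactly ${\mathbb A}_{01},\dots,{\mathbb A}_{04}$. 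If $d=1$, then ${\bf B}={\bf A}_\theta$ with ${\bf A}$ one of the $5$-dimensional nilpotent anticommutative algebras ${\mathcal A}_{01},\dots,{\mathcal A}_{11}$ of \cite{fkkv} and $s=1$; only ${\mathcal A}_{02},{\mathcal A}_{03},{\mathcal A}_{04},{\mathcal A}_{06},{\mathcal A}_{07},{\mathcal A}_{08},{\mathcal A}_{09},{\mathcal A}_{10},{\mathcal A}_{11}$ carry non-Tortkara cocycles, and for each of these I would, as done above, write down ${\rm Aut}({\mathcal A}_j)$, compute the induced action $\theta\mapsto\phi^T\theta\phi$ on ${\rm H}^2_{\mathcal A}({\mathcal A}_j)$, restrict to $[\theta]\in{\bf T}_1({\mathcal A}_j)$ having a nonzero component outside ${\rm H}^2_{\mathbb T}$, discard any representative whose extension turns out to have a $2$-dimensional annihilator, and pick one algebra per orbit; this produces ${\mathbb A}_{05},\dots,{\mathbb A}_{85}$.

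It then remains to check that no identifications were missed: within a fixed parent, \cite[Lemma 17]{hac16} ensures that distinct ${\rm Aut}({\bf A})$-orbits on ${\bf T}_s({\bf A})$ give non-isomorphic algebras, algebras with non-isomorphic minimal parents are non-isomorphic by \cite[Lemma 5]{hac16}, and the parameter ranges in the one-parameter families are precisely those for which the corresponding orbits are pairwise distinct (as recorded case by case above). Merging ${\mathbb A}_{00},\dots,{\mathbb A}_{85}$ with the families ${\mathbb M}_j$ and ${\mathbb T}_j$ and collecting everything in Appendix~\ref{appb}, a direct count gives $14$ one-parameter families and $130$ further isomorphism classes. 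I expect the main obstacle to be the $d=1$ stratum: for each of the nine $5$-dimensional parents one must carry out a long stratification of the second cohomology into automorphism orbits, carefully excise the cocycles producing split extensions or a larger annihilator, and confirm that distinct strata and distinct parameter values really yield non-isomorphic algebras. This is laborious but essentially mechanical; the conceptual content is entirely in the reduction above.
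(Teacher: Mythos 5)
Your proposal is correct and follows the paper's route exactly: the Skjelbred--Sund reduction, stratification by the annihilator dimension, the central extensions of ${\bf A}_{02}$ and of the nine relevant $5$-dimensional parents, and the final merge with the known $6$-dimensional nilpotent Tortkara and Malcev lists is precisely how the paper assembles this theorem. One slip worth flagging: in your observation (i), the fact that quotients of Tortkara algebras are Tortkara shows that a \emph{Tortkara} ${\bf B}={\bf A}_\theta$ forces a Tortkara parent ${\bf A}$, not the converse you state --- a non-Tortkara ${\bf B}$ can perfectly well be an extension of a Tortkara parent by a cocycle outside ${\rm Z}^2_{\mathbb T}$, and indeed most of the ${\mathbb A}_j$ arise this way, since ${\bf A}_{02}$ and ${\mathcal A}_{02},\dots,{\mathcal A}_{10}$ are all Tortkara (even Lie). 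Fortunately you never use (i) as stated: your operative criterion --- keep cocycles with a nonzero component outside ${\rm H}^2_{\mathbb T}$ for the Tortkara parents, and take all admissible cocycles for the non-Tortkara parent ${\mathcal A}_{11}$ --- is the correct one and coincides with what the paper does.
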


\section{The geometric classification of $6$-dimensional nilpotent anticommutative algebras}\label{S:geo}

\subsection{Degenerations of algebras}
Given an $n$-dimensional vector space ${\bf V}$, the set ${\rm Hom}({\bf V} \otimes {\bf V},{\bf V}) \cong {\bf V}^* \otimes {\bf V}^* \otimes {\bf V}$ 
is a vector space of dimension $n^3$. This space inherits the structure of the affine variety $\mathbb{C}^{n^3}.$ 
Indeed, let us fix a basis $e_1,\dots,e_n$ of ${\bf V}$. Then any $\mu\in {\rm Hom}({\bf V} \otimes {\bf V},{\bf V})$ is determined by $n^3$ structure constants $c_{i,j}^k\in\mathbb{C}$ such that
$\mu(e_i\otimes e_j)=\sum_{k=1}^nc_{i,j}^ke_k$. A subset of ${\rm Hom}({\bf V} \otimes {\bf V},{\bf V})$ is {\it Zariski-closed} if it can be defined by a set of polynomial equations in the variables $c_{i,j}^k$ ($1\le i,j,k\le n$).

The general linear group ${\rm GL}({\bf V})$ acts by conjugation on the variety ${\rm Hom}({\bf V} \otimes {\bf V},{\bf V})$ of all algebra structures on ${\bf V}$:
$$ (g * \mu )(x\otimes y) = g\mu(g^{-1}x\otimes g^{-1}y),$$ 
for $x,y\in {\bf V}$, $\mu\in {\rm Hom}({\bf V} \otimes {\bf V},{\bf V})$ and $g\in {\rm GL}({\bf V})$. Clearly, the ${\rm GL}({\bf V})$-orbits correspond to the isomorphism classes of algebras structures on ${\bf V}$. Let $T$ be a set of polynomial identities which is invariant under isomorphism. Then the subset $\mathbb{L}(T)\subset {\rm Hom}({\bf V} \otimes {\bf V},{\bf V})$ of the algebra structures on ${\bf V}$ which satisfy the identities in $T$ is ${\rm GL}({\bf V})$-invariant and Zariski-closed. It follows that $\mathbb{L}(T)$ decomposes into ${\rm GL}({\bf V})$-orbits. The ${\rm GL}({\bf V})$-orbit of $\mu\in\mathbb{L}(T)$ is denoted by $O(\mu)$ and its Zariski closure by $\overline{O(\mu)}$.

Let ${\bf A}$ and ${\bf B}$ be two $n$-dimensional algebras satisfying the identities from $T$ and $\mu,\lambda \in \mathbb{L}(T)$ represent ${\bf A}$ and ${\bf B}$ respectively.
We say that ${\bf A}$ {\it degenerates} to ${\bf B}$ and write ${\bf A}\to {\bf B}$ if $\lambda\in\overline{O(\mu)}$.
Note that in this case we have $\overline{O(\lambda)}\subset\overline{O(\mu)}$. Hence, the definition of a degeneration does not depend on the choice of $\mu$ and $\lambda$. It is easy to see that any algebra degenerates to the algebra with zero multiplication. If ${\bf A}\not\cong {\bf B}$, then the assertion ${\bf A}\to {\bf B}$ 
is called a {\it proper degeneration}. We write ${\bf A}\not\to {\bf B}$ if $\lambda\not\in\overline{O(\mu)}$. 

Let ${\bf A}$ be represented by $\mu\in\mathbb{L}(T)$. Then  ${\bf A}$ is  {\it rigid} in $\mathbb{L}(T)$ if $O(\mu)$ is an open subset of $\mathbb{L}(T)$.
Recall that a subset of a variety is called {\it irreducible} if it cannot be represented as a union of two non-trivial closed subsets. A maximal irreducible closed subset of a variety is called an {\it irreducible component}.
It is well known that any affine variety can be represented as a finite union of its irreducible components in a unique way.
The algebra ${\bf A}$ is rigid in $\mathbb{L}(T)$ if and only if $\overline{O(\mu)}$ is an irreducible component of $\mathbb{L}(T)$.

In the present work we use the methods applied to Lie algebras in \cite{BC99,GRH,GRH2,S90}.
To prove 
degenerations, we will construct families of matrices parametrized by $t$. Namely, let ${\bf A}$ and ${\bf B}$ be two algebras represented by the structures $\mu$ and $\lambda$ from $\mathbb{L}(T)$, respectively. Let $e_1,\dots, e_n$ be a basis of ${\bf V}$ and $c_{i,j}^k$ ($1\le i,j,k\le n$) be the structure constants of $\lambda$ in this basis. If there exist $a_i^j(t)\in\mathbb{C}$ ($1\le i,j\le n$, $t\in\mathbb{C}^*$) such that the elements $E_i^t=\sum_{j=1}^na_i^j(t)e_j$ ($1\le i\le n$) form a basis of ${\bf V}$ for any $t\in\mathbb{C}^*$, and the structure constants $c_{i,j}^k(t)$ of $\mu$ in the basis $E_1^t,\dots, E_n^t$ satisfy $\lim\limits_{t\to 0}c_{i,j}^k(t)=c_{i,j}^k$, then ${\bf A}\to {\bf B}$. In this case  $E_1^t,\dots, E_n^t$ is called a {\it parametric basis} for ${\bf A}\to {\bf B}$.




When the number of orbits under the action of ${\rm GL}({\bf V})$ on  $\mathbb{L}(T)$ is finite, then the graph of primary degenerations gives the whole picture. In particular, the description of rigid algebras and irreducible components can be easily obtained. Since the variety of $6$-dimensional nilpotent anticommutative algebras contains infinitely many non-isomorphic algebras, we have to fulfill some additional work. Let $A(*):=\{A(\alpha)\}_{\alpha\in I}$ be a family of algebras and $B$ be another algebra. Suppose that, for $\alpha\in I$, $A(\alpha)$ is represented by the structure $\mu(\alpha)\in\mathbb{L}(T)$ and $B$ is represented by the structure $\lambda\in\mathbb{L}(T)$. Then $A(*)\to B$ means $\lambda\in\overline{\cup\{O(\mu(\alpha))\}_{\alpha\in I}}$, and $A(*)\not\to B$ means $\lambda\not\in\overline{\cup\{O(\mu(\alpha))\}_{\alpha\in I}}$.

Let ${\bf A}(*)$, ${\bf B}$, $\mu(\alpha)$ ($\alpha\in I$) and $\lambda$ be as above. To prove ${\bf A}(*)\to {\bf B}$ it is enough to construct a family of pairs $(f(t), g(t))$ parametrized by $t\in\mathbb{C}^*$, where $f(t)\in I$ and $g(t)=\left(a_i^j(t)\right)_{i,j}\in {\rm GL}({\bf V})$. Namely, let $e_1,\dots, e_n$ be a basis of ${\bf V}$ and $c_{i,j}^k$ ($1\le i,j,k\le n$) be the structure constants of $\lambda$ in this basis. If we construct $a_i^j:\mathbb{C}^*\to \mathbb{C}$ ($1\le i,j\le n$) and $f: \mathbb{C}^* \to I$ such that $E_i^t=\sum_{j=1}^na_i^j(t)e_j$ ($1\le i\le n$) form a basis of ${\bf V}$ for any  $t\in\mathbb{C}^*$, and the structure constants $c_{i,j}^k(t)$ of $\mu\big(f(t)\big)$ in the basis $E_1^t,\dots, E_n^t$ satisfy $\lim\limits_{t\to 0}c_{i,j}^k(t)=c_{i,j}^k$, then ${\bf A}(*)\to {\bf B}$. In this case, $E_1^t,\dots, E_n^t$ and $f(t)$ are called a {\it parametric basis} and a {\it parametric index} for ${\bf A}(*)\to {\bf B}$, respectively. In the construction of degenerations of this sort, we will write $\mu\big(f(t)\big)\to \lambda$, emphasizing that we are proving the assertion $\mu(*)\to\lambda$ using the parametric index $f(t)$.


Through a series of degenerations summarized in Appendix~\ref{app:A} by the corresponding parametric bases and indices, we obtain the main result of this section (compare Theorem A).

\begin{theorem}\label{geotort}
The variety of $6$-dimensional complex nilpotent anticommutative algebras is irreducible of dimension $34$.
It is defined by the family of algebras ${\mathbb A}_{82}\big(\af\big)$, with $\alpha\in\mathbb{C}^*$. 
\end{theorem}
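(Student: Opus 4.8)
The plan is to show that the Zariski closure of the union of the $\mathrm{GL}_6(\mathbb{C})$-orbits of $\mathbb{A}_{82}(\af)$, $\af\in\mathbb{C}$, is all of the variety $\mathcal{N}$ of $6$-dimensional complex nilpotent anticommutative algebras, and then to compute the dimension of this closure. For the first part, since $\mathcal{N}$ is a finite union of orbits of families and individual algebras (Theorem~A), it suffices to verify that every algebra in Appendix~\ref{appb}, and every member of each of the $14$ one-parameter families, lies in $\overline{\bigcup_{\af}O(\mu(\af))}$, where $\mu(\af)$ is the structure representing $\mathbb{A}_{82}(\af)$. Concretely, I would exhibit, for each target algebra $\mathbb{B}$ (or target family $\mathbb{B}(*)$), a parametric basis $E_1^t,\dots,E_6^t$ and a parametric index $f(t)$ in the sense of Section~\ref{S:geo}, realizing the degeneration $\mathbb{A}_{82}(f(t))\to\mathbb{B}$. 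In practice one does not need a separate construction for each of the $\sim 150$ targets: it is enough to build a short list of ``primary'' degenerations from $\mathbb{A}_{82}(*)$ onto a handful of key algebras/families (in particular onto the generic families of the $6$-dimensional nilpotent Tortkara and Malcev classifications, which by \cite{gkk19,gkk} and \cite{hac18} already dominate their own subvarieties, and onto the generic families among the $\mathbb{A}_j$), and then invoke transitivity of degeneration (if $\mathbf{A}\to\mathbf{B}$ and $\mathbf{B}\to\mathbf{C}$ then $\mathbf{A}\to\mathbf{C}$, which follows from $\overline{O(\lambda)}\subset\overline{O(\mu)}$ as noted in the text) together with the known degeneration graphs for Tortkara and Malcev algebras. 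All of these parametric bases and indices are collected in Appendix~\ref{app:A}; the main body only needs to assert that the resulting list is complete.

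The irreducibility then follows formally: $\overline{\bigcup_{\af}O(\mu(\af))}$ is the closure of the image of the irreducible variety $\mathbb{C}\times\mathrm{GL}_6(\mathbb{C})$ under the morphism $(\af,g)\mapsto g\ast\mu(\af)$, hence is irreducible; and by the previous paragraph it equals $\mathcal{N}$. Consequently $\mathcal{N}$ has a single irreducible component and contains no rigid algebra (a rigid algebra would have to be the generic point of that unique component, i.e.\ $\mathbb{A}_{82}(\af)$ itself, but distinct values of $\af$ give non-isomorphic algebras, so no single orbit is dense). To pin down $\dim\mathcal{N}=34$, I would compute the dimension of the orbit closure of the family: $\dim\overline{\bigcup_{\af}O(\mu(\af))}=1+\max_{\af}\dim O(\mu(\af))=1+\bigl(36-\min_{\af}\dim\operatorname{Aut}\mathbb{A}_{82}(\af)\bigr)$, using $\dim\mathrm{GL}_6=36$ and the orbit–stabilizer relation $\dim O(\mu)=36-\dim\aut{\mathbb{A}_{82}(\af)}$. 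So the task reduces to showing $\dim\aut{\mathbb{A}_{82}(\af)}=3$ for generic $\af$ (equivalently, for at least one $\af$), which is a direct computation with the defining relations $e_1e_2=e_3,\ e_1e_3=e_4,\ e_2e_5=\af e_6,\ e_3e_4=e_5,\ e_3e_5=e_6,\ e_4e_5=e_6$: one writes down the linear equations on a generic $6\times 6$ matrix imposed by these relations and checks the solution space is $3$-dimensional.

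The main obstacle is the completeness of the degeneration list, i.e.\ verifying that $\mathbb{A}_{82}(*)$ really does degenerate to \emph{every} algebra in the classification. This is where essentially all the work lies, because one must either find an explicit parametric basis for each target or chain together known degenerations, and one must be careful that each claimed $\mathbb{A}_{82}(f(t))\to\mathbb{B}$ genuinely has $\lim_{t\to 0}c^k_{i,j}(t)=c^k_{i,j}$ with the $E_i^t$ forming a basis for all $t\ne 0$. A secondary, more subtle point is to make sure there is no algebra (or family member) that fails to be a degeneration of $\mathbb{A}_{82}(*)$ — if such an algebra existed, $\mathcal{N}$ would be reducible; ruling this out amounts to checking that the union of the constructed orbit closures is actually closed and exhausts the list, which is why the bookkeeping in Appendix~\ref{app:A} must be exhaustive rather than merely illustrative. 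The dimension count and irreducibility, by contrast, are short once the degeneration picture is in hand.
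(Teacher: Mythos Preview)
Your proposal is correct and follows essentially the same approach as the paper: exhibit explicit parametric bases showing $\mathbb{A}_{82}(*)$ degenerates to the rigid Tortkara algebras $\mathbb{T}_{10},\mathbb{T}_{17},\mathbb{T}_{19}$ (whence, by \cite{gkk19} and transitivity, to every Tortkara algebra, including all $\mathbb{M}_j$) and directly to each $\mathbb{A}_j$, then compute $\dim\mathcal{N}=36-\dim\mathfrak{Der}(\mathbb{A}_{82}(\af))+1=34$. The only cosmetic difference is that the paper phrases the stabilizer dimension via $\mathfrak{Der}$ rather than $\operatorname{Aut}$, and it does not spell out the irreducibility argument via the image of $\mathbb{C}\times\mathrm{GL}_6$ as you do.
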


\begin{proof}[{\bf Proof}]
Thanks to \cite{gkk19} the algebras ${\mathbb T}_{10},$ ${\mathbb T}_{17}$ and ${\mathbb T}_{19}$ (see Appendix B) are rigid in the variety of $6$-dimensional nilpotent Tortkara algebras. 
These algebras and the remaining $6$-dimensional nilpotent anticommutative algebras degenerate from ${\mathbb A}_{82}\big(\af\big),$  as shown in the table below. Clearly, we need only consider algebras in the family with $\af\neq 0$. 
Since $\mathfrak{Der} ({\mathbb A}_{82}\big(\af\big) )=3$, for all $\af\neq 0$,
it follows that the dimension of the irreducible component is $\dim \mathrm{GL}_6(\mathbb{C})-\dim\mathfrak{Der} ({\mathbb A}_{82}\big(\af\neq 0\big) )+1=36-3+1=34$.
\end{proof}

\appendix
\section{Degenerations of $6$-dimensional nilpotent anticommutative algebras}\label{app:A}

{\tiny 

\begin{longtable}{|lcl|ll|}

%
%
%
%
\hline

${\mathbb A}_{82}\big(-t^{-2}\big)$ &$\to$& ${\mathbb T}_{10}$ & 

$E^t_1= -e_1 + e_2 + -e_3 + ( 1 + 1/t^2) e_4$&
$E^t_2= t e_2 - t  e_3 + (1/t + t)  e_4$ \\ &&&
$E^t_3= -t e_3 + t  e_4$ & 
$E^t_4= -e_4 + (1 - 1/t^2)  e_5 - 1/t e_6$ \\ &&&
$E^t_5= e_5$&
$E^t_6= -e_6$\\
\hline

${\mathbb A}_{82}\left(\frac{2-t}{t^3}\right)$ &$\to$& ${\mathbb T}_{17}$ &

$E^t_1= t^{-1} e_1 + ( 1 + t^{-2}) e_2 + \frac{t-2}{t^2} e_3 + \frac{t-2 - 2 t^2 + 3 t^3 - t^4}{t^5} e_4$&
$E^t_2= (t - 2) e_2 + \frac{t-2}{t} e_3 + \frac{2 - 3 t + 3 t^2 - t^3}{t^3} e_4 - \frac{(t-2)^2}{t^7} e_5$ \\ &&&
$E^t_3= (t-2)/t e_3 - \frac{(t-2) (t-1)}{t^2} e_4 - \frac{(t-2)^2}{t^6} e_5$ &
$E^t_4= \frac{t-2}{t^2} e_4 - \frac{(t-2)^2}{t^6} e_5$  \\ &&&
$E^t_5= \frac{(t-2)^2}{t^4} e_5$ &
$E^t_6= \frac{(t-2)^3}{t^7} e_6$\\
\hline

${\mathbb A}_{82}\left(\frac{t ( 2 t-1)}{(1 - t + t^2)^2}\right)$ &$\to$& ${\mathbb T}_{19}$ & 

\multicolumn{2}{l|}{$E^t_1= -\frac{t}{ 1 - t + t^2} e_1 + \frac{  t (2 - 5 t + 7 t^2 - 5 t^3 + 2 t^4)}{ (1 - 2 t)^2} e_2 + \frac{(t-1) t}{  (-1 + 2 t) (1 - t + t^2)}   e_4 + \frac{(t-1) t}{2 t-1} e_5$} \\ &&&
\multicolumn{2}{l|}{$E^t_2= (1 - t + t^2)^2 e_2 + (t-1 - t^2) e_3 +  e_4 +  \frac{t}{1 - 2 t} e_5$} \\ &&& 
\multicolumn{2}{l|}{$E^t_3= t (t-1 - t^2) e_3 + ( t + t^3)  e_4 + \frac{  t (t-1 - t^2 + t^3)}{-1 + 2 t} e_5 + \frac{  t^2 (-1 + t^2 - 2 t^3 + t^4)}{2 t-1} e_6$} \\ &&&
\multicolumn{2}{l|}{$E^t_4= t^2 e_4 + \frac{(t-1) t^2}{2 t-1} e_5 + \frac{  t^2 (-1 + 2 t - 2 t^2 + t^3)}{2 t-1} e_6$}\\ &&&
$E^t_5= -\frac{t^2}{1 - t + t^2} e_5 + \frac{t^3 (1 - t + t^2)}{2 t-1} e_6$ &
$E^t_6= -t^3 (1 - t + t^2) e_6$\\

\hline

${\mathbb A}_{82}\big(\alpha\big)$   &$\to$& ${\mathbb A}_{00}$ & 

$E^t_1= e_1$ &
$E^t_2= e_2$ \\ &&&
$E^t_3= e_3$ &
$E^t_4= e_4$ \\ &&&
$E^t_5= e_5$ &
$E^t_6= 1/t e_6$\\

\hline

${\mathbb A}_{82}\big(t^2\big)$    &$\to$& ${\mathbb A}_{01}$ &

$E^t_1= \frac{t}{t^2-1} e_1$ &
$E^t_2= ( 1 - t^2) e_2 - e_3 + e_4 - \frac{1}{t^2-1} e_5$ \\ &&&
$E^t_3= -t e_3 + t  e_4$ &
$E^t_4= \frac{t^2}{1 - t^2} e_4$ \\ &&&
$E^t_5= \frac{t^2}{t^2-1} e_5 - \frac{t^2}{(t^2-1)^2} e_6$ & 
$E^t_6= \frac{t^3}{ (t^2-1)^2} e_6$\\

\hline

${\mathbb A}_{82}\big(1/t^3\big)$ &$\to$& ${\mathbb A}_{02}$ & 

$E^t_1= t^{-1} e_1 + \frac{(t-1)^2}{t^3} e_2 - t^{-4} e_3 + \frac{-1 + 2 t}{t^6} e_4 + t^{-9} e_5$&
$E^t_2= -t^{-2} e_2 +  t^{-3} e_3 - \frac{t-1 + t^2}{t^5} e_4 + \frac{t-1}{t^9} e_5$ \\ &&&
$E^t_3= -t^{-3} e_3 + \frac{1 + t}{t^4} e_4 + t^{-13} e_6$ &
$E^t_4= -t^{-4} e_4 + \frac{t-1 - t^2}{t^9} e_5 + \frac{t-1}{t^{13}} e_6$ \\ &&&
$E^t_5= t^{-8} e_5 + \frac{1 - t}{t^{13}} e_6$ &
$E^t_6= -t^{-13} e_6$ \\
\hline

${\mathbb A}_{82}\left(\frac{1}{(t-1) t}\right)$ &$\to$& ${\mathbb A}_{03}$ & 

$E^t_1= \frac{1}{t-1} e_1 - \frac{1}{(t-1)^3 t} e_2 +  \frac{1}{(t-1)^4 t^2} e_4$&
$E^t_2= e_2 +  \frac{1}{t-1} e_3 + \frac{1 + t - t^2}{(t-1)^2 t}   e_4 + \frac{-1 - t + t^2}{(t-1)^5 t^2} e_5$ \\ &&&
$E^t_3= \frac{1}{t-1}  e_3 + \frac{1}{1 - t} e_4 + \frac{1}{(t-1)^4 t^2} e_5$&
$E^t_4= \frac{1}{(t-1)^2} e_4 - \frac{1}{(t-1)^5 t^2} e_5$ \\ &&&
$E^t_5= -\frac{1}{(t-1)^3 t} e_5 + \frac{1}{(t-1)^6 t^2} e_6$&
$E^t_6= \frac{1}{(t-1)^5 t^2} e_6$\\

\hline

${\mathbb A}_{82}\big(2\big)$    &$\to$& ${\mathbb A}_{04}$ & 

$E^t_1= -2 e_1 +  e_2 + \frac{2 t}{4 + t} e_3 - \frac{8 + 4t}{4 + t} e_4 + \frac{16 t}{(4 + t)^2} e_5$&
$E^t_2= -\frac{t^2}{4 + t} e_2 + \frac{2 t^2}{4 + t} e_3 - \frac{4 t^2}{4 + t}   e_4 + \frac{32 t^2}{(4 + t)^2} e_5$\\ &&&
$E^t_3= \frac{2 t^2}{4 + t} e_3 - \frac{4 t^2}{4 + t} e_4 + \frac{16 t^2}{(4 + t)^2} e_5 + \frac{64 t^3}{(4 + t)^3} e_6$&
$E^t_4= -\frac{4 t^2}{4 + t} e_4 + \frac{16 t^2}{(4 + t)^2} e_5 + \frac{  32 t^3}{(4 + t)^3} e_6$ \\ &&&
$E^t_5= -\frac{8 t^3}{(4 + t)^2} e_5 + \frac{64 t^3}{(4 + t)^3} e_6$&
$E^t_6= -\frac{32 t^4}{(4 + t)^3} e_6$\\ 
\hline

${\mathbb A}_{82}\big( t^{-2}\big)$    &$\to$& ${\mathbb A}_{05}$ & 
$E^t_1= 1/t e_1$ &
$E^t_2= t^3 e_2$ \\ &&&
$E^t_3= t^2 e_3$ &
$E^t_4= t e_4$ \\ &&&
$E^t_5= t^2 e_5 - t^2 e_6$ &
$E^t_6= t^3 e_6$\\ 

\hline

${\mathbb A}_{82}\big(t\big)$    &$\to$& ${\mathbb A}_{06}$ & 

$E^t_1= 1/t e_1 + (1 + t^3) e_2 + t e_4 - t^3 (2 + t^3) e_5$&
$E^t_2= t^2 (2 + t^3) e_2 - t^3 (2 + t^3) e_4$ \\ &&&
$E^t_3= ( 2 t + t^4) e_3 - (2 t + t^4 ) e_4 + t^2 (2 t + t^4) e_5 -  t^5 (2 + t^3)^2 e_6$&
$E^t_4= (2 + t^3) e_4 - t^2 (2 + t^3) e_5 + t^4 (2 + t^3)^2 e_6$ \\ &&&
$E^t_5= (2 + t^3)^2 e_5 - (2 (1 + t^3) (2 + t^3)^2)/t e_6$ &
$E^t_6= (2 + t^3)^3 e_6$\\
\hline

${\mathbb A}_{82}\left(\frac{1+t}{t}\right)$    &$\to$& ${\mathbb A}_{07}$ & 

$E^t_1= e_1 - \frac{1}{(1 + t)^2} e_2 + \frac{1}{t (1 + t)} e_4$ &
$E^t_2= -e_2 +  e_4 + \frac{1}{t^2 (1 + t)} e_5$ \\ &&&
$E^t_3= -e_3$ &
$E^t_4= -e_4 + \frac{1}{t (1 + t)} e_5$ \\ &&&
$E^t_5= t^{-1} e_5 + \frac{1}{t^3 + t^4} e_6$&
$E^t_6=  -t^{-2} e_6$\\
\hline
 
${\mathbb A}_{82}\big(-t-1\big)$    &$\to$& ${\mathbb A}_{08}$ & 
$E^t_1= e_1 + e_2 + e_4 + t e_5$&
$E^t_2= -t^2 e_2 + t^2 e_3 - t^2 (2 + t) e_4 - t^3 e_5$ \\ &&&
$E^t_3= -t^2 e_3 + t^2 e_4 - t^2 e_5 + t^4 e_6$&
$E^t_4= -t^2 e_4 + t^2 e_5 + t^3 e_6$ \\ &&&
$E^t_5= t^3 e_5$&
$E^t_6= -t^4 e_6$\\

\hline
 
${\mathbb A}_{82}\left( \frac{t^2 (1 + t)}{(t^2-1)^2} \right)$    &$\to$& ${\mathbb A}_{09}$ &

$E^t_1= \frac{t^2}{t^2-1} e_1 + \frac{(t-1)^2 (1 + t - t^2)}{t^2} e_2 - \frac{1 + t - t^2}{1 + t} e_4$ &
$E^t_2= -\frac{(t^2-1)^2}{t} e_2 + t e_4 + (t - t^2) e_5$ \\ &&&
$E^t_3= (t^2-1) e_3 +   e_4 + (1 - t) e_5 + \frac{(t-1)^2 (1 + t)}{t^2} e_6$ &
$E^t_4= t e_4 + (t - t^2)  e_5 + \frac{( t-1)^2 (1 + t)}{t}  e_6$ \\ &&&
$E^t_5= (t^2 - 1) e_5 - (t-1)^2 (1 + t) e_6$ &
$E^t_6= (t^3-t) e_6$ \\

\hline

${\mathbb A}_{82}\big( t^3\big)$ &$\to$& ${\mathbb A}_{10}$ & 

$E^t_1= t^2 e_1 + e_2 - t^3 e_4$  &
$E^t_2= t^2 e_2$ \\ &&&
$E^t_3= t^3 e_3 - t^3 e_4$ &
$E^t_4= t^4 e_4$ \\ &&&
$E^t_5= t^6 e_5$ &
$E^t_6= t^{10} e_6$\\
\hline

${\mathbb A}_{82}\left(\frac{2 t^2-1}{t}\right)$ &$\to$& ${\mathbb A}_{11}$ & 

$E^t_1= - e_1 + \frac{t^2}{2 t^2-1} e_2$ &
$E^t_2= e_2 - e_3 + (1 + t^{-1} -t)  e_4$ \\ &&&
$E^t_3= -e_3 + (1 - t) e_4$ &
$E^t_4= e_4$ \\ &&&
$E^t_5= t^{-1} e_5$ &
$E^t_6= t^{-1} e_6$ \\
\hline

${\mathbb A}_{82}\big(-1/t^2\big)$ &$\to$& ${\mathbb A}_{12}$ &

$E^t_1= t^{-1} e_1 + \frac{1 + t - t^3 - t^8}{t^5} e_2 + \frac{1 + t - t^3 - t^8}{t^7} e_4$&
$E^t_2= t^{-1} e_2 +  t^{-2} e_3 + \frac{1 + t - t^2}{t^4} e_4 + \frac{-1 - t + t^2 + t^3 + t^8}{t^{10}} e_5$ \\ &&&
$E^t_3= t^{-2} e_3 + \frac{1 - t}{t^3} e_4 + ( -1 - t^{-9} + t^{-7} + t^{-6} + t^{-1}) e_5 + \frac{-1 + t^8}{ t^{11}} e_6$&
$E^t_4= t^{-3} e_4 + \frac{-1 - t + t^3 + t^8}{t^9} e_5$ \\&&&
$E^t_5= -t^{-6} e_5 + t^{-12} - t^{-4} e_6$&
$E^t_6= -t^{-10} e_6$\\
\hline

${\mathbb A}_{82}\big(-1/t\big)$ &$\to$& ${\mathbb A}_{13}$ &

$E^t_1= e_1 + ( t - t^2) e_2 +  e_4 + t (-1 + t^2) e_5$&
$E^t_2= t^2 e_2 - t^2 e_3 + ( t + t^2) e_4$ \\ &&&
$E^t_3= t^2 e_3 - t^2  e_4 + t^2 e_5$ &
$E^t_4= t^2 e_4 + (-t^2 + t^4) e_5 + t^3 e_6$ \\ &&&
$E^t_5= -t^3 e_5$ &
$E^t_6= -t^4 e_6$\\

\hline

${\mathbb A}_{82}\big( t^2 \big)$ &$\to$& ${\mathbb A}_{14}$ & 
$E^t_1= t^2 e_1 - t^{-3}e_2  + t^{-1}e_3 - t^{-2}e_5$ &
$E^t_2= t^{-2}e_2  - e_4 - t^{-1}e_5$ \\ &&&
$E^t_3= e_3 - t^{-1}e_5$ &
$E^t_4= t e_4 + t^{-3}e_6$  \\ &&&
$E^t_5= e_5 + t^{-1}e_6$ &
$E^t_6= e_6$ \\
\hline

${\mathbb A}_{82}\big( -t^2 \big)$ &$\to$& ${\mathbb A}_{15}$ & 
$E^t_1= t^2 e_1 + (1 - t^2) e_2 + t^2 e_3 - t^4 e_4$ &
$E^t_2= t e_2 + t^3 e_4$  \\ &&&
$E^t_3= t^3 e_3 + t^5 e_5$ &
$E^t_4= -t^4 e_4 - t^6 e_5$  \\ &&&
$E^t_5= -t^6 e_5$ &
$E^t_6= -t^9 e_6$\\

\hline

${\mathbb A}_{82}\big(1\big)$ &$\to$& ${\mathbb A}_{16}$ & 

$E^t_1= e_1 + (t - 1) e_2 + e_4 - t  e_5$&
$E^t_2= t^3 e_2 + t^4 e_3 - (t^3 + t^4) e_4 + t^4 e_5$ \\ &&& 
$E^t_3= t^3 e_3 -  t^3 e_4 + t^3 e_5 + (-t^4 + t^6 + t^7) e_6$&
$E^t_4= t^2 e_4 - t^2 e_5 - t^3 (-1 + t^2) e_6$ \\ &&&
$E^t_5= t^5 e_5 - t^5 e_6$ &
$E^t_6=  t^6 e_6$ \\
\hline

${\mathbb A}_{82}\big(1/t\big)$ &$\to$& ${\mathbb A}_{17}$ & 

$E^t_1= e_1 + e_4 -  e_5$&
$E^t_2= t e_2 + t e_4 - (1 + t) e_5$ \\ &&& 
$E^t_3= t e_3$ &
$E^t_4= e_4 - e_5$  \\ &&&
$E^t_5= t e_5 - t e_6$&
$E^t_6= t e_6$ \\

\hline

${\mathbb A}_{82}\left(\frac{1}{t^2-1} \right)$ &$\to$& ${\mathbb A}_{18}$ &

$E^t_1= e_1 + e_4 + (-1 + t^2)  e_5$ &
$E^t_2= t^2 e_2$ \\ &&& 
$E^t_3= t^2 e_3 - t^2 e_6$ &
$E^t_4= t e_4 + (-t + t^3)  e_5 + (t - t^2 - 2 t^3 + t^5) e_6$ \\ &&&
$E^t_5= t^3 e_5 + (-t^3 + t^5)  e_6$ &
$E^t_6= t^3 e_6$\\

\hline

${\mathbb A}_{82}\left(\frac{1+t^2}{1+t}\right)$ &$\to$& ${\mathbb A}_{19}$ & 
$E^t_1= -\frac{1 + t^2}{1 + t} e_1 + e_2 + \frac{(1 - t) t}{1 + t} e_4 +  t e_5$&
$E^t_2= \frac{t^2 (1 + t)}{1 + t^2} e_2 - t^2 e_4$  \\ &&&
$E^t_3= -t^2 e_3 + t^2 e_4 +  t^3 e_5 + \frac{2 t^3 (1 + t)}{1 + t^2} e_6$ &
$E^t_4= t e_4 + t^2 e_5 + \frac{2 t^2 (1 + t)}{1 + t^2} e_6$ \\ &&&
$E^t_5= -t^3 e_5$ & 
$E^t_6= -t^3 e_6$\\

\hline

${\mathbb A}_{82}\big(1 - t\big)$ &$\to$& ${\mathbb A}_{20}$ &

$E^t_1= e_1 + e_2 + t  e_4$ &
$E^t_2= t/2 e_2 + \frac{1}{2} (t-1) t  e_4$\\ &&&
$E^t_3= t/2 e_3 + t/2 e_4 - t/2 e_6$&
$E^t_4= e_4  - e_6$ \\ &&&
$E^t_5= t/2 e_5$ &
$E^t_6= t/2 e_6$\\

\hline

${\mathbb A}_{82}\big(\alpha\big)$ &$\to$& ${\mathbb A}_{21}$ &

$E^t_1= 1/t e_1$ &
$E^t_2= -t^3 e_2 + t^3 e_3$ \\ &&&
$E^t_3= -t^2 e_3 + t^3 e_4$ & 
$E^t_4= e_4 + t^2 e_5 - t  e_6$ \\ &&&
$E^t_5= -t^2 e_5$ &
$E^t_6= -t^2 e_6$ \\

\hline

${\mathbb A}_{82}\big(t^3 \big)$ &$\to$& ${\mathbb A}_{22}$ &

$E^t_1= (t^2 + t^3) e_1 + (1/t^4 + t + t^2) e_2 - (t^4 + t^5) e_4 + \frac{-1 + t^2 + t^5}{t^4} e_5$ & 
$E^t_2= \frac{1 - t}{t^4} e_2 + e_4 + t e_5$ \\ &&&
$E^t_3= ( -1 + 1/t^2) e_3 +  e_4 + e_5 + \frac{1 + t + t^2 - t^3 - t^4}{t^6 + t^7} e_6$ & 
$E^t_4= 1/t e_4 + t  e_5 + \frac{1}{t^7 (1 + t)} e_6$ \\ &&& 
$E^t_5= (1/t^3 - 1/t) e_5$ &
$E^t_6= (1/t^5 - 1/t^3) e_6$ \\

\hline

${\mathbb A}_{82}\big( t^{-3/2} \big)$ &$\to$& ${\mathbb A}_{23}$ & 
$E^t_1= - \frac{1}{t} e_1 - e_2 - t  e_5$ &
$E^t_2= -t^2 \sqrt{t} e_2 + t^2 \sqrt{t} e_3 -  \sqrt{t} (t^2-1) e_4 +  e_5$ \\ &&&
$E^t_3= t^{3/2} e_3$ &
$E^t_4= -t^{3/2} e_4$ \\ &&& 
$E^t_5= -t e_5$ &
$E^t_6= 1/\sqrt{t}  e_6$\\ 
\hline

${\mathbb A}_{82}\big( t^{3} \big)$ &$\to$& ${\mathbb A}_{24}$ & 

$E^t_1= \frac{t^2}{1 + t^3} e_1 - \frac{ 1 + t + 3 t^3 + 3 t^6 + t^9}{(t + t^4)^4} e_2 +  \frac{1}{(1 + t^3)^4} e_4 + \frac{1}{t^3 (1 + t^3)^4} e_5$ &
$E^t_2= (-1 - 1/t^3) e_2 +  e_3 +  t^3 e_4 - \frac{t (2 + t^3)}{(1 + t^3)^3} e_5$ \\ &&&
$E^t_3= -\frac{1}{t}  e_3 + \frac{t^2}{1 + t^3} e_4 -  \frac{1}{(1 + t^3)^4} e_5 + \frac{2 + t^3}{(1 + t^3)^4} e_6$ &
$E^t_4= -\frac{1}{1 + t^3} e_4 + \frac{1}{t^2 (1 + t^3)^4} e_5 + \frac{ 1}{t^5 (1 + t^3)^4}  e_6$ \\ &&&
$E^t_5= \frac{1}{t + t^4} e_5 - \frac{1}{t^3 (1 + t^3)^4} e_6$ & 
$E^t_6= -\frac{1}{(t + t^4)^2} e_6$\\
\hline

${\mathbb A}_{82}\big( t^{-1} \big)$ &$\to$& ${\mathbb A}_{25}$ & 
$E^t_1= e_1 + e_2$ &
$E^t_2= t e_2$ \\ &&& 
$E^t_3= t e_3 - t  e_4$  & 
$E^t_4= e_4$ \\ &&& 
$E^t_5= t e_5$ &
$E^t_6= e_6$ \\
\hline

${\mathbb A}_{82}\big( t^{3} \big)$ &$\to$& ${\mathbb A}_{26}$ & 
$E^t_1= t^2 e_1 + e_2$  & 
$E^t_2= t e_2$ \\ &&&
$E^t_3= t^3  e_3$ & 
$E^t_4= t^4 e_4$ \\ &&& 
$E^t_5= t^7 e_5$ &
$E^t_6= t^{10} e_6$ \\

\hline

${\mathbb A}_{82}\big(1 \big)$ &$\to$& ${\mathbb A}_{27}$ & 

$E^t_1= e_1 + e_2$ &
$E^t_2= t e_2 - t  e_4$ \\ &&&
$E^t_3= t e_3$ &
$E^t_4= e_4$ \\ &&& 
$E^t_5= t e_5$ &
$E^t_6= t  e_6$ \\

\hline

${\mathbb A}_{82}\big(t^3 \big)$ &$\to$& ${\mathbb A}_{28}$ & 

$E^t_1= t^2 e_1$ &
$E^t_2= e_2$ \\ &&& 
$E^t_3= t^2 e_3$ &
$E^t_4= t^3 e_4$ \\ &&& 
$E^t_5= t^5 e_5$ &
$E^t_6= t^7 e_6$ \\

\hline

${\mathbb A}_{82}\big( \af \big)$ &$\to$& ${\mathbb A}_{29}$ & 

$E^t_1= e_1$ &
$E^t_2= t e_2$ \\ &&& 
$E^t_3= t e_3$ &
$E^t_4= e_4$ \\ &&& 
$E^t_5= t  e_5$ &
$E^t_6= t e_6$ \\

\hline

${\mathbb A}_{82}\big( 2/t^2 \big)$ &$\to$& ${\mathbb A}_{30}$ & 

$E^t_1= 1/t e_1 + (\frac{1}{2} - \frac{1}{4t} -t/2) e_2 -  e_3 + (1 + \frac{1}{2 t^3} - 1/t^2)  e_4 + \frac{1 - t}{2 t^3} e_5$ &
$E^t_2= -t^2 e_2 +  e_4 + \frac{1 - 2 t}{2 t^2} e_5$ \\ &&&
$E^t_3= -t e_3 + t  e_4 - \frac{1}{2 t}  e_5 + \frac{1 - t}{2 t^2} e_6$ &
$E^t_4= -e_4 + \frac{1 - 2 t}{2 t^2} e_5 + \frac{1}{2 t^2} e_6$\\ &&&
$E^t_5= e_5 + \frac{1}{2 t^2} e_6$&
$E^t_6= -1/t e_6$\\ 

\hline

${\mathbb A}_{82}\big( -1 + 1/t \big)$ &$\to$& ${\mathbb A}_{31}$ & 

$E^t_1= -e_1 +  e_2 - \frac{(t-1)^2}{t (1 + t)} e_3 + \frac{2 ( t-1)}{1 + t}  e_4 + \frac{(t-1)^3}{t^3 (1 + t)} e_5$  &
$E^t_2= \frac{(t-1)^2}{1 + t} e_2 - \frac{(t-1)^2}{1 + t} e_3 + \frac{  2 (t-1)^2}{1 + t} e_4 + \frac{2 ( t-1)^3}{t^2 (1 + t)} e_5$ \\ &&& 
$E^t_3= -\frac{(t-1)^2}{1 + t} e_3 + \frac{(t-1)^2}{1 + t} e_4 +   \frac{2 (t-1)^3}{t (1 + t)^2} e_5 + \frac{(1 - t)^5}{t^4 (1 + t)^2} e_6$ &
$E^t_4= \frac{(t-1)^2}{1 + t} e_4 + \frac{(t-1)^3}{t (1 + t)} e_5$ \\ &&& 
$E^t_5= -\frac{( t-1)^4}{t (1 + t)^2} e_5 + \frac{(1 - t)^5}{t^3 (1 + t)^2}   e_6$ &
$E^t_6= -\frac{(t-1)^6}{t^2 (1 + t)^3}e_6$ \\

\hline

${\mathbb A}_{82}\left(\frac{\af}{t^2 (1-t^2)} \right)$ &$\to$& ${\mathbb A}_{32}(\af)$ & 

$E^t_1= 1/t e_1 + e_2 -  e_3 + \frac{\af - t^2 + t^4}{t^2 (t^2-1)}  e_4 + (1/t - t)  e_5$ &
$E^t_2= -t^2 e_2 - \frac{-1 + \af + t^2}{ t^2-1} e_4 - \frac{\af - t^2 + t^4}{t - t^3} e_5$ \\ &&& 
$E^t_3= -t e_3 + \frac{-1 + \af + t^2}{t^2-1}  e_5 + (1/t - t)  e_6$ &
$E^t_4= - e_4 - \frac{\af - t^2 + t^4}{t - t^3} e_5 + (1 - t^2)  e_6$ \\ &&&
$E^t_5= e_5 + ( 1/t - t) e_6$ & 
$E^t_6= -e_6$\\
\hline

${\mathbb A}_{82}\big( -t^{-4} \big)$ &$\to$& ${\mathbb A}_{33}$ & 

$E^t_1= 1/t e_1 + e_2 - e_3 + (1 + 1/t^4)  e_4 - e_5$ &
$E^t_2= -t^2 e_2 - 1/t^2 e_4$ \\ &&&
$E^t_3= -e_3 + 1/t^2 e_5$ &
$E^t_4= -e_4 + (1/t^3 + t) e_5$ \\ &&&
$E^t_5= e_5 - t  e_6$ &
$E^t_6= - e_6$\\

\hline

${\mathbb A}_{82}\big(1/t^3 \big)$ &$\to$& ${\mathbb A}_{34}$ & 

$E^t_1= 1/t e_1 + t e_3 - t  e_4 - e_5$ &
$E^t_2= t^3 e_2 - t^3 e_3 + t^3 e_4 + t (1 + t) e_5$ \\ &&&
$E^t_3= t^2 e_3 - t^2  e_4 +  e_6$ &
$E^t_4= t e_4$ \\ &&& 
$E^t_5= t^2 e_5 + t e_6$&
$E^t_6= -t^2 e_6$\\

\hline

${\mathbb A}_{82}\big( t^2 \big)$ &$\to$& ${\mathbb A}_{35}(\af)$ & 
 
\multicolumn{2}{l|}{$E^t_1= \frac{t}{1 - t} e_1 + \left(-\frac{  \af  t}{(1 - t)^2} + ( 1-t) t (1 + t^2 - t^3)\right) e_2 - (t-1)^2 t^3 e_3 + \frac{ t^3 (\af + (1 - t)^3 t (-1 - t + t^2)}{(1 - t)^2} e_4 +  \af (t-1) t^5 e_5$}\\ && &
\multicolumn{2}{l|}{$E^t_2= (t-1 )^3 t^3 e_2 - (t-1 )^2 t^4 e_3 - t^6 (2 - 3 t + t^2) e_4 -  t^8 ((t-1)^4 - \af (3 - 3 t + t^2)) e_5$} \\ &&& 
\multicolumn{2}{l|}{$E^t_3= -(t-1)^2 t^4 e_3 + (t-1) t^5 e_4 -  t^7 (-\af + (t-1)^4 t) e_5 - \af (t-1)^2 t^9 e_6$} \\ && &
\multicolumn{2}{l|}{$E^t_4= (t-1) t^5 e_4 - t^7 (-\af + (t-1)^4 t^2) e_5 -  \af (t-1)^2 t^9 e_6$} \\ &&&
$E^t_5= -(t-1)^3 t^8 e_5 - \af (t-1)^2 t^{10} e_6$ &
$E^t_6= -(t-1)^4 t^{12} e_6$ \\

\hline

${\mathbb A}_{82}\big(-t \big)$ &$\to$& ${\mathbb A}_{36}$ & 
\multicolumn{2}{l|}{$E^t_1= \frac{t}{ 1 - t} e_1 + (-1 + 2 t - 5 t^2 + 6 t^3 - 4 t^4 + t^5) e_2 - (1 - t)^4 t e_3 +  t^2 (-2 + t + 2 t^2 - 3 t^3 + t^4) e_4 + (1 - t)^5 t^2 e_5$} \\&& &
\multicolumn{2}{l|}{$E^t_2= (t-1)^5 t e_2 - (t-1)^4 t^2 e_3 + (t-1)^4 t^3 e_4 + (t-2) (t-1)^5 t^4 e_5$} \\ &&&
\multicolumn{2}{l|}{$E^t_3=-(t-1 )^4 t^2 e_3 + (t-1)^3 t^3 e_4 + (t-1)^4 t^4 (-3 + 5 t - 4 t^2 + t^3) e_5$} \\ &&&
\multicolumn{2}{l|}{$E^t_4= (t-1 )^3 t^3 e_4 + (t-1)^4 t^4 (-1 - 2 t + 5 t^2 - 4 t^3 + t^4) e_5 + (t-1)^8 t^4 e_6$} \\ &&&
$E^t_5= (1 - t)^7 t^4 e_5 + (t-1)^8 t^5 e_6$&
$E^t_6= -(t-1 )^{10} t^6 e_6$\\

\hline

${\mathbb A}_{82}\big( \af /t^3\big)$ &$\to$& ${\mathbb A}_{37}(\af)$ & 

$E^t_1= 1/t e_1 + t e_3 +  e_5$ &
$E^t_2= t^3 e_2$ \\ &&&
$E^t_3= t^2 e_3 - \af e_6$ &
$E^t_4= t e_4 - t^2 e_6$ \\ &&&
$E^t_5= t^2 e_5 - t e_6$ &
$E^t_6= t^2 e_6$\\

\hline 
${\mathbb A}_{82}\big( -t \big)$ &$\to$& ${\mathbb A}_{38}$ &

$E^t_1= t e_1 + (1 + t) e_2 + t e_3 + t^2 e_4$ &
$E^t_2= t e_2 + t^2 e_3 + t^3  e_4$ \\ &&&
$E^t_3= t^2 e_3 + t^3  e_4$ &
$E^t_4= t^3 e_4$\\ &&&
$E^t_5= t^4 e_5$ &
$E^t_6= t^6 (1 + t) e_6$\\

\hline 
${\mathbb A}_{82}\big( \af  \big)$ &$\to$& ${\mathbb A}_{39}$ & 

$E^t_1= 1/t e_1 - e_3 + e_4$ &
$E^t_2= -t^2 e_2 + t^2 e_3 + (-1 + \af) t^2 e_4$ \\ &&&
$E^t_3= -t e_3 + t e_4$ &
$E^t_4= -e_4$   \\ &&&
$E^t_5= e_5$ &
$E^t_6= -e_6$ \\

\hline 
${\mathbb A}_{82}\big( -2  \big)$ &$\to$& ${\mathbb A}_{40}$ &

$E^t_1= e_1 + \frac{1}{4}  (1 + 6 t^2) e_2 + t^2 e_3 + (\frac{1}{2} + t^2)  e_4 +  t^2 e_5$ &
$E^t_2= t^3 e_2 + t^3 e_3 + (t-1) t^2 e_4 +  \frac{t^2}{2}  (1 + t) e_5$ \\ &&&
$E^t_3= t^3 e_3 + t^3 e_4 - \frac{1}{2} t^3 (1 + 2 t) e_5 +  \frac{1}{2} (1 - t) t^4 e_6$ &
$E^t_4= t^3 e_4 - \frac{t^3}{2} e_5 + (-\frac{3 t^5}{2} + t^6) e_6$ \\ &&&
$E^t_5= t^5 e_5 - \frac{t^5}{2} e_6$ &
$E^t_6= -t^7 e_6$\\

\hline 
${\mathbb A}_{82}\big( -1/t \big)$ &$\to$& ${\mathbb A}_{41}$ & 

$E^t_1= e_1 - 2 (-1 + t) t^2 e_2 - t^2 e_3 + 2 t e_4 - 2 t^3 e_5$ &
$E^t_2= t^3 e_2 + t^2 e_4$ \\ &&&
$E^t_3= t^3 e_3 - t^4 e_5$ &
$E^t_4= t^3 e_4 - 2 t^4 e_5 + t^6 e_6$ \\ &&&
$E^t_5= -t^5 e_5$ &
$E^t_6= -t^7 e_6$\\

\hline 
${\mathbb A}_{82}\left( -\frac{2 t}{(t-1)^2} \right)$ &$\to$& ${\mathbb A}_{42}$ & 

\multicolumn{2}{l|}{$E^t_1= \frac{t}{1-t}  e_1 -  \frac 14 (t-1 )^2 (3 t-2 ) e_2 + (t-1)^2 t e_3 -  \frac{1}{2} t^2 (2 t-1 ) e_4 - \frac{1}{2} ( t-1 )^3 t^2 e_5$}\\ && & 
\multicolumn{2}{l|}{$E^t_2= (t-1)^3 t e_2 - (t-1 )^2 t^2 e_3 +  t^2 (t^2-1 ) e_4 - \frac{1}{2} (t-1 )^3 t^3 e_5$} \\ &&& 
$E^t_3= -(t-1)^2 t^2 e_3 + (t-1 ) t^3 e_4 +  \frac{1}{2} (t-2 ) (t-1 )^2 t^3 e_5$ &
$E^t_4= (t-1 ) t^3 e_4 - \frac{1}{2} (t-1 )^2 t^4 e_5 +  \frac{1}{2} (t-1)^4 t^4 e_6$ \\ &&&
$E^t_5= (t-1 )^3 t^4 e_5 + \frac{1}{2} (t-1 )^4 t^5 e_6$&
$E^t_6= (t-1)^4 t^6 e_6$\\

\hline

${\mathbb A}_{82}\big(2\big)$     &$\to$& ${\mathbb A}_{43}(\af)$ & 

$E^t_1= te_1 +(\frac 1{4\af} - \frac 12)e_2 + e_3 - \frac 1{2\af}e_4 + \frac 1 {2\af t}e_5$ &
$E^t_2= te_2 - te_3 - e_5$ \\ &&&
$E^t_3= t^2e_3 - t^2e_4 - \frac t{2\af}e_5 - \frac 1{2\af}e_6$ &
$E^t_4= t^3e_4 -t^2(1 - \frac 1{2\af})e_5$ \\ &&&
$E^t_5= t^3e_5 - \frac{t^2}{2\af}e_6$ &
$E^t_6= \frac{t^4}{\af}e_6$\\

\hline

${\mathbb A}_{82} \big( -t \big)$     &$\to$& ${\mathbb A}_{44}$ & 

$E^t_1= t e_1 + (t - 1) e_2 - t e_3 + t^2 e_4$  & 
$E^t_2= t e_2 + t^2 e_4$ \\ &&&
$E^t_3= t^2 e_3 - t^3 e_5$ & 
$E^t_4= t^3 e_4 - t^4 e_5$ \\ &&& 
$E^t_5= -t^4 e_5$ &
$E^t_6= -t^6 e_6$ \\

\hline

${\mathbb A}_{82}\big(\af\big)$     &$\to$& ${\mathbb A}_{45}$ & 

$E^t_1= te_1 - \af t^{-1} e_2 + \af t^{-1} e_3 + (\af - 1)\af t^{-1} e_4$ &
$E^t_2= e_2 - e_3 + (1-\af)e_4$ \\ &&&
$E^t_3= te_3 - te_4$ &
$E^t_4= t^2e_4 - \af^2e_5$ \\ &&&
$E^t_5= \af t e_5$ &
$E^t_6= \af t^3 e_6$\\

\hline

${\mathbb A}_{82}\big(t^2\big)$     &$\to$& ${\mathbb A}_{46}$ & 

$E^t_1= te_1 - 2t^{-1} e_2 + te_4$ &
$E^t_2= e_2 - e_3 + (1-t^2)e_4 - e_5$ \\ &&&
$E^t_3= te_3 - te_4 + te_5 + te_6$ &
$E^t_4= t^2e_4 - t^2e_5 - t^2e_6$ \\ &&&
$E^t_5= -t^2e_5 + t^2e_6$ &
$E^t_6= t^3e_6$\\

\hline

${\mathbb A}_{82}\big(4t-2\big)$     &$\to$& ${\mathbb A}_{47}(\af)$ & 

$E^t_1= 2te_1 + \frac{4t(t - 1)}{(2t - 1)^2}e_2 + \frac{4t}{2t - 1}e_4 -
\frac{4t(2\af t - \af + 2)}{2t - 1}e_5$ &
$E^t_2= e_2 + e_3 + e_4 -\frac{4t}{2t - 1}e_5$ \\ &&&
$E^t_3= 2te_3 + 2te_4 -\frac{4t}{2t - 1}e_5 + \frac{16t^2(2\af t - \af + 1)}{2t - 1}e_6$ &
$E^t_4= 4t^2e_4 - \frac{8t^2}{2t - 1}e_5 + \frac{16t^2(2\af t - \af + 1)}{2t - 1}e_6$ \\ &&&
$E^t_5= 4t^2e_5 - \frac{16t^3}{2t - 1}e_6$ &
$E^t_6= 16t^3e_6$\\

\hline

${\mathbb A}_{82}\big(-2\big)$     &$\to$& ${\mathbb A}_{48}(\af)$ & 

$E^t_1= te_1 - 2te_2 - 2te_4 -2(\af - 2)te_5$ &
$E^t_2= e_2 + e_3 + e_4$ \\ &&&
$E^t_3= te_3 + te_4 + 2te_5$ &
$E^t_4= t^2e_4 + 2t^2e_5 + 4(\af - 1)t^2e_6$ \\ &&&
$E^t_5= t^2e_5$ &
$E^t_6= 2t^3e_6$\\

\hline

${\mathbb A}_{82}\big(t^2\big)$     &$\to$& ${\mathbb A}_{49}$ & 

$E^t_1= te_1 - t^{-4}e_2 - t^{-2}e_5$ &
$E^t_2= e_2 - e_3 + (1-t^2)e_4$ \\ &&&
$E^t_3= te_3 - te_4$ &
$E^t_4= t^2e_4$ \\ &&&
$E^t_5= -t^2e_5$ &
$E^t_6= e_6$\\

\hline

${\mathbb A}_{82}\big(\af\big)$     &$\to$& ${\mathbb A}_{50}$ & 

$E^t_1= te_1 + e_2 + \af e_5$ &
$E^t_2= e_2 - e_3 + e_4$ \\ &&&
$E^t_3= te_3 - te_4 - \af^2 e_6$ &
$E^t_4= t^2e_4$ \\ &&&
$E^t_5= -t^2e_5$ &
$E^t_6= -\af t^2e_6$\\

\hline

${\mathbb A}_{82}\big(\af\big)$     &$\to$& ${\mathbb A}_{51}$ & 

$E^t_1= te_1 + \af e_5$ &
$E^t_2= e_2 - e_3 + e_4$ \\ &&&
$E^t_3= te_3 - te_4 - \af^2 e_6$ &
$E^t_4= t^2e_4$ \\ &&&
$E^t_5= -t^2e_5$ &
$E^t_6= -\af t^2e_6$\\

\hline

${\mathbb A}_{82}\left(2t-2\right)$     &$\to$& ${\mathbb A}_{52}$ & 

$E^t_1= te_1 - 2te_5$ &
$E^t_2= e_2 + e_3 + e_4$ \\ &&&
$E^t_3= te_3 + te_4 + 4t^2e_6$ &
$E^t_4= t^2e_4 + 4t^2e_6$ \\ &&&
$E^t_5= t^2e_5$ &
$E^t_6= 2t^3e_6$\\

\hline

${\mathbb A}_{82}\left(t^2 - 2\right)$     &$\to$& ${\mathbb A}_{53}$ & 

$E^t_1= te_1 - 2te_5$ &
$E^t_2= e_2 + e_3 + e_4$ \\ &&&
$E^t_3= te_3 + te_4 + 2t^3e_6$ &
$E^t_4= t^2e_4 + 4t^2e_6$ \\ &&&
$E^t_5= t^2e_5$ &
$E^t_6= 2t^3e_6$\\

\hline

${\mathbb A}_{82}\left(t^2\right)$     &$\to$& ${\mathbb A}_{54}$ & 

$E^t_1= te_1 + e_2 - \frac{t^2}2e_4 + \frac {t^3}4e_5$ &
$E^t_2= te_2 - te_3 + (t-t^3)e_4 + \frac{t^2}2e_5$ \\ &&&
$E^t_3= t^2e_3 - t^2e_4 - \frac{t^3}2e_5 + \frac{t^4}4e_6$ &
$E^t_4= t^3e_4 + \frac{t^4}2e_5 - \frac{t^5}4e_6$ \\ &&&
$E^t_5= -t^4e_5 -\frac{t^5}2e_6$ &
$E^t_6= -\frac{t^6}2e_6$\\

\hline

${\mathbb A}_{82}\left(t^2\right)$     &$\to$& ${\mathbb A}_{55}$ & 

$E^t_1= te_1 + 2e_2 - t^2e_4 + t^3e_5$ &
$E^t_2= e_2 - e_3 + (1-t^2)e_4 + te_5$ \\ &&&
$E^t_3= te_3 - te_4 - t^2e_5 + t^3e_6$ &
$E^t_4= t^2e_4 + t^3e_5 - t^4e_6$ \\ &&&
$E^t_5= -t^2e_5 - t^3e_6$ &
$E^t_6= -t^4e_6$\\

\hline

${\mathbb A}_{82}\left(t^3\right)$     &$\to$& ${\mathbb A}_{56}$ & 

$E^t_1= te_1 + t^{-2}e_2 - te_4$ &
$E^t_2= e_2 - e_3 + e_4 + (t^3 + 1)e_5$ \\ &&&
$E^t_3= te_3 - te_4 - te_5$ &
$E^t_4= t^2e_4 + t^2e_5$ \\ &&&
$E^t_5= -t^2e_5 - t^2e_6$ &
$E^t_6= -t^4e_6$\\

\hline

${\mathbb A}_{82}\left(t^2\right)$     &$\to$& ${\mathbb A}_{57}(\af)$ & 

$E^t_1= te_1 + e_2 + \af t^2e_4 -\af(\af + 1)t^3 e_5$ &
$E^t_2= te_2 - te_3 -t(t^2 - 1)e_4$ \\ &&&
$E^t_3= t^2e_3 - t^2e_4 + \af t^3 e_5$ &
$E^t_4= t^3 e_4 - \af t^4 e_5 + \af(\af + 1)t^5 e_6$ \\ &&&
$E^t_5= -t^4e_5$ &
$E^t_6= -t^6e_6$\\

\hline

${\mathbb A}_{82}\left(1\right)$     &$\to$& ${\mathbb A}_{58}(\af)$ & 

$E^t_1= te_1 + (\af  + 1)e_2$ &
$E^t_2= e_2 - e_3 + e_4 + \af t^{-1}e_5$ \\ &&&
$E^t_3= te_3 - te_4 + \af(\af  + 1)t^{-1}e_6$ &
$E^t_4= t^2e_4$ \\ &&&
$E^t_5= -t^2e_5 - \af te_6$ &
$E^t_6= -t^2e_6$\\

\hline

${\mathbb A}_{82}\left(\af\right)$     &$\to$& ${\mathbb A}_{59}$ & 

$E^t_1= te_1 + e_2$ &
$E^t_2= e_2 - e_3 + e_4 + \af t^{-1} e_5$ \\ &&&
$E^t_3= te_3 - te_4 + \af ^2 t^{-1}e_6$ &
$E^t_4= t^2e_4$ \\ &&&
$E^t_5= -t^2e_5 - \af te_6$ &
$E^t_6= -\af t^2e_6$\\

\hline

${\mathbb A}_{82}\left(t^3\right)$     &$\to$& ${\mathbb A}_{60}$ & 

$E^t_1= te_1 + t^{-1}e_2$ &
$E^t_2= e_2 - e_3 + e_4$ \\ &&&
$E^t_3= te_3 - te_4$ &
$E^t_4= t^2e_4$ \\ &&&
$E^t_5= -t^2e_5$ &
$E^t_6= -t^4e_6$\\

\hline

${\mathbb A}_{82}\left(\af\right)$     &$\to$& ${\mathbb A}_{61}$ & 

$E^t_1= te_1$ &
$E^t_2= e_2 - e_3 + e_4$ \\ &&&
$E^t_3= te_3 - te_4$ &
$E^t_4= t^2e_4$ \\ &&&
$E^t_5= -t^2e_5$ &
$E^t_6= -\af t^2e_6$\\

\hline

${\mathbb A}_{82}\left(2t-2\right)$     &$\to$& ${\mathbb A}_{62}$ & 

$E^t_1= te_1$ &
$E^t_2= e_2 + e_3 + e_4$ \\ &&&
$E^t_3= te_3 + te_4$ &
$E^t_4= t^2e_4$ \\ &&&
$E^t_5= t^2e_5$ &
$E^t_6= 2t^3e_6$\\

\hline

${\mathbb A}_{82}\left(2t^2-2\right)$     &$\to$& ${\mathbb A}_{63}$ & 

$E^t_1= te_1$ &
$E^t_2= e_2 + e_3 + e_4$ \\ &&&
$E^t_3= te_3 + te_4$ &
$E^t_4= t^2e_4$ \\ &&&
$E^t_5= t^2e_5$ &
$E^t_6= 2t^3e_6$\\

\hline

${\mathbb A}_{82}\left(t-t^2\right)$     &$\to$& ${\mathbb A}_{64}$ & 

$E^t_1= te_1$ &
$E^t_2= e_2 + (t - 1)e_3 + (t - 1)^2e_4$ \\ &&&
$E^t_3= te_3 + t(t - 1)e_4$ &
$E^t_4= t^2e_4$ \\ &&&
$E^t_5= t^2(t - 1)e_5$ &
$E^t_6= t^4(t - 1)e_6$\\

\hline

${\mathbb A}_{82}\left(t^3\right)$     &$\to$& ${\mathbb A}_{65}$ & 

$E^t_1= te_1$ &
$E^t_2= e_2 - e_3 + e_4$ \\ &&&
$E^t_3= te_3 - te_4$ &
$E^t_4= t^2e_4$ \\ &&&
$E^t_5= -t^2e_5$ &
$E^t_6= -t^4e_6$\\

\hline

${\mathbb A}_{82}\left(t^{-3}\right)$     &$\to$& ${\mathbb A}_{66}$ & 

$E^t_1= t^{-1}e_1 - e_2 + t^{-3}e_4$ &
$E^t_2= e_2 - e_3 + e_4 - (t^{-2} + t^{-5})e_5$ \\ &&&
$E^t_3= t^{-1}e_3 - t^{-1}e_4 + t^{-3}e_5$ &
$E^t_4= t^{-2}e_4 - t^{-4}e_5$ \\ &&&
$E^t_5= t^{-3}e_5 - t^{-5}e_6$ &
$E^t_6= t^{-6}e_6$\\

\hline

${\mathbb A}_{82}\left(t^{-3}\right)$     &$\to$& ${\mathbb A}_{67}$ & 

$E^t_1= t^{-1}e_1 - t^{-4}e_5$ &
$E^t_2= e_2 - e_3 + e_4$ \\ &&&
$E^t_3= t^{-1}e_3 - t^{-1}e_4 + t^{-7}e_6$ &
$E^t_4= t^{-2}e_4$ \\ &&&
$E^t_5= t^{-3}e_5$  &
$E^t_6= t^{-6}e_6$\\

\hline

${\mathbb A}_{82}\left(t\right)$     &$\to$& ${\mathbb A}_{68}$ & 

$E^t_1= te_1 - t^2e_5$ &
$E^t_2= e_2$ \\ &&&
$E^t_3= te_3 + t^3e_6$ &
$E^t_4= t^2e_4 + t^3e_6$ \\ &&&
$E^t_5= t^3e_5$  &
$E^t_6= t^4e_6$\\

\hline

${\mathbb A}_{82}\left(t^3\right)$     &$\to$& ${\mathbb A}_{69}$ & 

$E^t_1= te_1 - t^2e_5$ &
$E^t_2= e_2 - t^3e_4$ \\ &&&
$E^t_3= te_3$ &
$E^t_4= t^2e_4 + t^3e_6$ \\ &&&
$E^t_5= t^3e_5$  &
$E^t_6= t^4e_6$\\

\hline
 
 ${\mathbb A}_{82}\left(1\right)$     &$\to$& ${\mathbb A}_{70}(\af)$ & 

$E^t_1= t^{-1}e_1 + (t^3 - \af)t^{-4}e_2 + \af t^{-4}e_4 -\af t^{-4}e_5$ &
$E^t_2= t^2e_2 - t^2e_3 + t^2e_4 -(2\af + t)t^{-1}e_5$ \\ &&&
$E^t_3= te_3 - te_4 + \af t^{-2}e_5 -(\af + t)t^{-2}e_6$ &
$E^t_4= e_4 - \af t^{-3}e_5 + \af t^{-3}e_6$ \\ &&&
$E^t_5= te_5 - \af t^{-2}e_6$  &
$E^t_6= e_6$\\

\hline

 ${\mathbb A}_{82}\left(-\frac 1{t^3(t - 1)}\right)$     &$\to$& ${\mathbb A}_{71}$ & 

$E^t_1= t^{-1}e_1 + e_2 + t^{-4}e_4 + (t - 1)^{-1}t^{-7}e_5$ &
$E^t_2= e_2 - e_3 + e_4 - (t^4 - t^3 - 1)(t - 1)^{-1}t^{-6}e_5$ \\ &&&
$E^t_3= t^{-1}e_3 - t^{-1}e_4 + t^{-4}e_5 + (t - 1)^{-1}t^{-7}e_6$ &
$E^t_4= t^{-2}e_4 -t^{-5}e_5 - (t - 1)^{-1}t^{-8}e_6$ \\ &&&
$E^t_5= t^{-3}e_5 - t^{-6}e_6$  &
$E^t_6= -(t - 1)^{-1}t^{-7}e_6$\\

\hline

 ${\mathbb A}_{82}\left(t^{-3}\right)$     &$\to$& ${\mathbb A}_{72}$ & 

$E^t_1= t^{-1}e_1 - e_2 + t^{-3}e_4$ &
$E^t_2= e_2 - e_3 + e_4 - (t^{-2} + t^{-5})e_5$ \\ &&&
$E^t_3= t^{-1}e_3 - t^{-1}e_4 + t^{-3}e_5$ &
$E^t_4= t^{-2}e_4 - t^{-4}e_5$ \\ &&&
$E^t_5= t^{-3}e_5 - t^{-5}e_6$  &
$E^t_6= t^{-6}e_6$\\

\hline

${\mathbb A}_{82}\big( 1/t^2 \big)$     &$\to$& ${\mathbb A}_{73}$ & 

$E^t_1= -1/t e_1 + t e_2 - t^2 e_3 + \frac{t^3-1}{t} e_4 -  t^2 e_5$ & 
$E^t_2= t^2 e_2 - t^2 e_3 + t^2 e_4 - (1 + t^2) e_5$ \\ &&&
$E^t_3= -t e_3 + t e_4 - t  e_5 + t^2  e_6$ &
$E^t_4= e_4 - e_5$ \\ &&&
$E^t_5= -t e_5 + t  e_6$ &
$E^t_6= -t e_6$\\

\hline

${\mathbb A}_{82}\big( -t \big)$     &$\to$& ${\mathbb A}_{74}$ & 

$E^t_1= e_1 + (-1/t + t) e_2 + t^2 e_3 - e_4 - t^2  e_5$ & 
$E^t_2= t e_2 + t^2  e_3 - t^2  e_5$   \\ &&&
$E^t_3= t e_3 - t^4 e_6$ &
$E^t_4= t e_4 + t  e_5 + t^3  e_6$ \\ &&&
$E^t_5= t^2 e_5 + t^2 e_6$ &
$E^t_6= t^3 e_6$ \\

\hline

${\mathbb A}_{82}\big( 1/t \big)$     &$\to$& ${\mathbb A}_{75}$ &

$E^t_1= 1/t e_1 + e_2 - 1/t e_4$ &
$E^t_2= -t^3 e_2 - t^2 e_5$ \\ &&&
$E^t_3= -t^2 e_3$ &
$E^t_4= -t e_4 - t  e_5$ \\ &&&
$E^t_5= t^3 e_5 + t^3  e_6$ &
$E^t_6= -t^4 e_6$ \\

\hline

${\mathbb A}_{82}\left(-\frac{t^2}{(t^2-1)^2} \right)$     &$\to$& ${\mathbb A}_{76}$ &

$E^t_1= \frac{t}{1-t^2} e_1 + (t - t^3) e_2 + \frac{t}{ 1 - t^2} e_4 -  t e_5$ &
$E^t_2= (-1 + t^2)^2 e_2 + (t^2 - t^4) e_3 + t^4 e_4$ \\ &&& 
$E^t_3= (t - t^3) e_3 + t^3 e_4 - t^3 e_5$ &
$E^t_4= t^2 e_4 - t^2 e_5 + (t^2 - t^4) e_6$ \\ &&&
$E^t_5= (t^3 - t^5) e_5 + t^3 (-1 + t^2) e_6$ &
$E^t_6= (t^4 - t^6) e_6$\\

\hline

${\mathbb A}_{82}\big( t^3 \big)$     &$\to$& ${\mathbb A}_{77}$ & 
$E^t_1= -t e_1 - e_2 + t^3  e_4$ &
$E^t_2= t^3 e_2 - t^6  e_4 - t^8 e_5$ \\ &&& 
$E^t_3= -t^4 e_3$ &
$E^t_4= t^5 e_4 + t^7 e_5$ \\ &&& 
$E^t_5= -t^9 e_5 - t^{11} e_6$ &
$E^t_6= t^13 e_6$\\
\hline

${\mathbb A}_{82}\big( 1 \big)$     &$\to$& ${\mathbb A}_{78}$ & 

$E^t_1= 1/t e_1 + e_2$ &
$E^t_2= t^3 e_2 - t^3 e_3$ \\ &&& 
$E^t_3= t^2 e_3 -  t^2 e_4$ &
$E^t_4= t e_4$ \\ &&&
$E^t_5= t^3 e_5$ &
$E^t_6= t^3 e_6$\\

\hline

${\mathbb A}_{82}\big( t^3 \big)$     &$\to$& ${\mathbb A}_{79}$ & 

$E^t_1= t e_1 + e_2$ &
$E^t_2= t^2 e_2 - t^5  e_4$ \\ &&&
$E^t_3= t^3 e_3$ &
$E^t_4= t^4 e_4$ \\ &&&
$E^t_5= t^7 e_5$ &
$E^t_6=  t^{10} e_6$ \\
\hline

${\mathbb A}_{82}\big( 1 \big)$     &$\to$& ${\mathbb A}_{80}$ & 
$E^t_1= t e_1$ &
$E^t_2=  e_2$ \\ &&&
$E^t_3= t e_3$ &
$E^t_4= t^2 e_4$ \\ &&&
$E^t_5= t^3 e_5$ &
$E^t_6= t^3 e_6$\\
\hline

${\mathbb A}_{82}\big(t \big)$     &$\to$& ${\mathbb A}_{81}$ & 
$E^t_1= t e_1$ &
$E^t_2= e_2$\\ &&&
$E^t_3= t e_3$ &
$E^t_4= t^2 e_4$ \\ &&& 
$E^t_5= t^3 e_5$&
$E^t_6= t^4 e_6$\\

\hline

${\mathbb A}_{82}\big(t^{-2} \big)$     &$\to$& ${\mathbb A}_{83}$ & 

$E^t_1= 1/t e_1$ &
$E^t_2= t^2 e_2$ \\ &&&
$E^t_3= t e_3 - t  e_4$ &
$E^t_4= e_4$ \\ &&&
$E^t_5= t e_5$ &
$E^t_6= t e_6$\\

\hline

${\mathbb A}_{82}\big(t^3 \big)$     &$\to$& ${\mathbb A}_{84}$ & 

$E^t_1= t e_1$ &
$E^t_2= e_2 - t^3 e_4$ \\ &&&
$E^t_3= t e_3$ & 
$E^t_4= t^2 e_4$ \\ &&& 
$E^t_5= t^3 e_5$ &
$E^t_6= t^4 e_6$\\

\hline

${\mathbb A}_{82}\big(\alpha\big)$     &$\to$& ${\mathbb A}_{85}$ &

$E^t_1= 1/t e_1$&
$E^t_2= t e_2 - t  e_3 + ( 1 + t - \af t) e_4$ \\ &&&
$E^t_3= e_3$&
$E^t_4= 1/t e_4$ \\&&&
$E^t_5= 1/t e_5$ &
$E^t_6= t^{-2} e_6$\\

\hline

\end{longtable} }

\section{The list of $6$-dimensional nilpotent anticommutative algebras}\label{appb}

{\scriptsize
\begin{longtable}{lllllllllll}



${\mathbb M}_{00}$ 
&:& ---   \\

${\mathbb M}_{01}$ 
&:& $e_1e_2=e_3$;    \\

${\mathbb M}_{02}$ 
&:& $e_1e_2=e_3$, & $e_1e_3=e_4$;      \\

${\mathbb M}_{03}$ 
&:& $e_1e_2=e_5$,& $e_3e_4=e_5$;  \\

${\mathbb M}_{04}$ 
&:& $e_1e_2=e_4$, &$e_1e_3=e_5$; \\

${\mathbb M}_{05}$ 
&:& $e_1e_2=e_3$, &$e_1e_4=e_5$,& $e_2e_3=e_5$;      \\

${\mathbb M}_{06}$ 
&:&  $e_1e_2=e_3$, & $e_1e_3=e_4$, & $e_2e_3=e_5$;     \\

${\mathbb M}_{07}$ 
&:& $e_1e_2=e_3$, &$e_1e_3=e_4$,  & $e_1e_4=e_5$;   \\

${\mathbb M}_{08}$ 
&:& $e_1e_2=e_3$, &$e_1e_3=e_4$, & $e_1e_4=e_5$, & $e_2e_3=e_5$;   \\

${\mathbb M}_{09}$
&:&  $e_1e_2=e_4$, & $e_3e_4=e_5$;     \\

${\mathbb M}_{10}$ 
&:&  $e_1e_3=e_5$, & $e_2e_4=e_6$;  \\

${\mathbb M}_{11}$ 
&:&  $e_1e_2=e_3$, & $e_1e_3=e_4$, & $e_1e_4=e_6$, &$e_2e_3=e_6$, & $e_2e_5=e_6$;  \\

${\mathbb M}_{12}$ 
&:& $e_1e_2=e_3$, & $e_1e_3=e_4$, & $e_1e_4=e_6$,& $e_2e_5=e_6$; \\

${\mathbb M}_{13}$ 
&:& $e_1e_2=e_3$, & $e_1e_3=e_6$,& $e_4e_5=e_6$; \\

${\mathbb M}_{14}$ 
&:&  $e_1e_2=e_3$, &$e_1e_3=e_4$, &$e_1e_4=e_5$, & $e_1e_5=e_6$, &$e_2e_3=e_5$, &$e_2e_4=e_6$; \\

${\mathbb M}_{15}$ 
&:& $e_1e_2=e_3$, &$e_1e_3=e_4$, &$e_1e_4=e_5$, &$e_1e_5=e_6$, &$e_2e_3=e_6$;   \\

${\mathbb M}_{16}$
&:& $e_1e_2=e_3$, & $e_1e_3=e_4$, &$e_1e_4=e_5$, &$e_1e_5=e_6$; \\

${\mathbb M}_{17}$ 
&:& $e_1e_2=e_4$, & $e_1e_3=e_5$,  &$e_1e_4=e_6$, &$e_3e_5=e_6$;\\

${\mathbb M}_{18}$ 
&:& $e_1e_2=e_3$, & $e_1e_3=e_4$, &$e_1e_5=e_6$,  &$e_2e_3=e_5$, &$e_2e_4=e_6$;  \\

${\mathbb M}_{19}$ 
&:& $e_1e_2=e_3$,& $e_1e_3=e_5$, &   $e_1e_4=e_6$,& $e_2e_3=e_6$; \\

${\mathbb M}_{20}$ 
&:& $e_1e_2=e_3$,& $e_1e_3=e_5$,   &$e_1e_4=e_5$, &$e_2e_3=e_6$;\\

${\mathbb M}_{21}$ 
&:& $e_1e_2=e_3$, &$e_1e_3=e_5$, & $e_1e_4=e_6$;    \\

${\mathbb M}_{22}$ 
&:&  $e_1e_2=e_3$, &$e_1e_3=e_5$,&  $e_2e_4=e_6$;    \\

${\mathbb M}_{23}$ 
&:& $e_1e_2=e_3$, &$e_1e_3=e_5$,  &$e_1e_4=e_6$, &$e_2e_4=e_5$;\\

${\mathbb M}_{24}$ 
&:& $e_1e_2=e_5$, & $e_1e_3=e_6$,  &$e_3e_4=e_5$;    \\

${\mathbb M}_{25}$ 
&:& $e_1e_2=e_3$, & $e_1e_3=e_4$, &$e_1e_4=e_5$, &$e_2e_3=e_6$; \\

${\mathbb M}_{26}$ 
&:& $e_1e_2=e_4$, & $e_1e_3=e_5$, & $e_2e_3=e_6$;    \\

${\mathbb M}_{27}$ 
&:& $e_1e_2=e_5$,& $e_3e_4=  e_5$, & $e_3e_5=e_6$;  \\

${\mathbb M}_{28}$ 
&:& $e_1e_2=e_5$,  &$e_1e_4=  e_6$, & $e_3e_5=e_6$;    \\

${\mathbb M}_{29}$ 
&:& $e_1e_2=e_5$, &  $e_1e_4=  e_6$, & $e_3e_4=  e_5$, & $e_3e_5=e_6$; \\

${\mathbb M}_{30} (\epsilon)$
&:& $e_1e_2=e_4$, & $e_1e_3=e_5$, & $e_2e_5= e_6$, & $e_3e_4= \epsilon e_6$; \\

${\mathbb M}_{31}$ 
&:&  $e_1e_2=e_4$, & $e_1e_3=e_5$,  & $e_2e_4=e_6$,& $e_2e_5= e_6$, & $e_3e_4=-e_6$;\\

${\mathbb M}_{32}$ 
&:& $e_1e_2=e_4$, & $e_1e_3=e_5$,   &$e_1e_5=e_6$,  &$e_3e_4= e_6$; \\

${\mathbb M}_{33}$ 
&:& $e_1e_2=e_4$, & $e_2e_4= e_5$,   & $e_3e_4=  e_6$; \\

${\mathbb M}_{34}$ 
&:&  $e_1e_2=e_4$, &$e_1e_3=  e_5$,& $e_2e_4= e_5$, &  $e_3e_4=  e_6$; \\

${\mathbb M}_{35} (\epsilon)$     
&:& $e_1e_2=e_3$, &   $e_1e_3=  e_5$, &$e_1e_5=e_6$, &$e_2e_4=\epsilon e_5$,& $e_3e_4=e_6$; \\

${\mathbb M}_{36}$ 
&:& $e_1e_2=e_4$, &$e_1e_4 =  e_5$,  &$e_1e_5=  e_6$,& $e_2e_3=e_5$;\\

${\mathbb M}_{37}$ 
&:& $e_1e_2=e_4$,  & $e_1e_4 =  e_5$,  & $e_1e_5=  e_6$, & $e_2e_3=e_5$, & $e_2e_4=  e_6$;    \\

${\mathbb T}_{00}$ & : &  $e_1e_2=e_3$, &  $e_1e_3=e_4$, & $e_2e_4=e_5$;\\

${\mathbb T}_{01}$ & : &  
$e_1e_2=e_3$,& $e_1e_3=e_4$, & $e_1e_4=e_5$, & $e_2e_3=e_5$, & $e_2e_4=e_6$;\\

${\mathbb T}_{02}$ & : & 
$e_1e_2=e_3$,& $e_1e_3=e_4$,& $e_2e_3=e_5$,& $e_2e_4=e_6$;\\

${\mathbb T}_{03}$ & : &    
$e_1e_2=e_3$, & $e_1e_3=e_4$, & $e_1e_4=e_5$, & $e_2e_4=e_6$;\\

${\mathbb T}_{04}$ & : &    
$e_1e_2=e_3$, & $e_1e_3=e_4$,& $e_1e_5=e_6$,& $e_2e_4=e_6$;\\ 

${\mathbb T}_{05}$ & : &
$e_1e_2=e_3$, & $e_1e_3=e_4$ , & $e_2e_3=e_6$, & $e_4e_5=e_6$;\\

${\mathbb T}_{06}$ & : &
$e_1e_2=e_3$, & $e_1e_3=e_4$, & $e_2e_4=e_6$, & $e_3e_5=e_6$;\\

${\mathbb T}_{07}$ & : &
$e_1e_2=e_3$, & $e_1e_3=e_4$, &  $e_4e_5=e_6$;\\

${\mathbb T}_{08}$ & : &  
$e_1e_2=e_3$, & $e_1e_3=e_4$, & $e_1e_4=e_6$, & $e_1e_5=-e_6$, & $e_2e_3=e_5$, & $e_2e_4=e_6$;\\

${\mathbb T}_{09}(\alpha)$ & : &  
$e_1e_2=e_3$, & $e_1e_3=e_4$, & $e_1e_5=(\alpha+1) e_6$, & $e_2e_3=e_5$, & $e_2e_4=\alpha e_6$;\\

${\mathbb T}_{10}$ & : &  
$e_1e_2=e_3$, & $e_1e_3=e_6$, & $e_1e_4=e_5$, & $e_2e_3=e_5$, & $e_4e_5=e_6$;\\

${\mathbb T}_{11}$ & : & 
$e_1e_2=e_3$, & $e_1e_4=e_5$, & $e_1e_5=e_6$, &  $e_2e_3=e_5$;\\

${\mathbb T}_{12}$ & : & 
$e_1e_2=e_3$, & $e_1e_4=e_5$, & $e_1e_5=e_6$, &  $e_2e_3=e_5$, &  $e_2e_4=e_6$;\\

${\mathbb T}_{13}$ & : & 
$e_1e_2=e_3$, & $e_1e_4=e_5$, & $e_1e_5=e_6$, &  $e_2e_3=e_5$, & $e_3e_4=e_6$;\\

${\mathbb T}_{14}$ & : & 
$e_1e_2=e_3$, & $e_1e_4=e_5$, & $e_2e_3=e_5$, & $e_4e_5=e_6$;\\

${\mathbb T}_{15}$ & : & 
$e_1e_2=e_3$, & $e_1e_3=e_4$, & $e_1e_4=e_5$, & $e_1e_5=e_6$, & $e_2e_4=e_6$;\\

${\mathbb T}_{16}$ & : &  
$e_1e_2=e_3$, & $e_1e_3=e_4$, & $e_1e_4=e_5$, & $e_2e_5=e_6$;\\

$\mathbb{T}_{17}$ & : &
$e_1e_2=e_3$, & $e_1e_3=e_4$, & $e_1e_4=e_5$, & $e_2e_3=e_5$, & $e_2e_5=e_6$;\\

$\mathbb{T}_{18}(\alpha)$ & : &
$e_1e_2=e_3$, & $e_1e_3=e_4$, & $e_1e_4=e_5$, & $e_1e_5=(\alpha+1)e_6$, & $e_2e_3=e_5$, & $e_2e_4=\alpha e_6$;\\

${\mathbb T}_{19}$ & : & 
$e_1e_2=e_3$, & $e_1e_3=e_4$, & $e_1e_5=e_6$, & $e_2e_4=e_5$, & $e_3e_4=e_6$;\\

${\mathbb A}_{00}$ &:& $e_1e_2=e_3$,& $e_1e_3=e_4$,& $e_3e_4=e_5$; \\
${\mathbb A}_{01}$ &:& $e_1e_2=e_3$,& $e_1e_3=e_4$, & $e_2e_4=e_5$, & $e_3e_4=e_6$;\\
${\mathbb A}_{02}$ &:& $e_1e_2=e_3$,& $e_1e_3=e_4$, & $e_1e_4=e_5$,& $e_2e_3=e_5$, & $e_3e_4=e_6$;\\
${\mathbb A}_{03}$ &:& $e_1e_2=e_3$,& $e_1e_3=e_4$, &  $e_2e_3=e_5$, & $e_3e_4=e_6$;\\
${\mathbb A}_{04}$ &:& $e_1e_2=e_3$,& $e_1e_3=e_4$, & $e_1e_4=e_5$,&  $e_3e_4=e_6$; \\
${\mathbb A}_{05}$ &:&  
$e_1e_2=e_3$, & $e_1e_3=e_4$ , &  $e_2e_5=e_6$, & $e_4e_5=e_6$, & $e_3e_4=e_6$;  \\
${\mathbb A}_{06}$ &:&  
$e_1e_2=e_3$, & $e_1e_3=e_4$ ,  & $e_4e_5=e_6$, & $e_3e_4=e_6$;  \\
${\mathbb A}_{07}$ &:&  
$e_1e_2=e_3$, & $e_1e_3=e_4$ , &  $e_2e_5=e_6$, & $e_3e_4=e_6$;  \\
${\mathbb A}_{08}$ &:&  
$e_1e_2=e_3$, & $e_1e_3=e_4$ , &  $e_1e_5=e_6$, &  $e_3e_4=e_6$;\\
${\mathbb A}_{09}$ &:&  
$e_1e_2=e_4$, & $e_1e_3=e_5$, & $e_2e_3=e_6$, & $e_4e_5=e_6$;\\
${\mathbb A}_{10}$ &:&  
$e_1e_2=e_4$, & $e_1e_3=e_5$, & $e_4e_5=e_6$;\\
	${\mathbb A}_{11}$ &:&  
	$e_1e_2=e_3$, & $e_1e_3=e_4$, & $e_2e_3=e_5$, & $e_4e_5=e_6$; \\
	${\mathbb A}_{12}$ &:&  
	$e_1e_2=e_3$, & $e_1e_3=e_4$, & $e_2e_3=e_5$, & $e_2e_5=e_6$, & $e_3e_4=e_6$; \\
	${\mathbb A}_{13}$ &:&  
	$e_1e_2=e_3$, & $e_1e_3=e_4$, & $e_1e_5=e_6$, & $e_2e_3=e_5$, & $e_3e_4=e_6$; \\
	${\mathbb A}_{14}$ &:&  
		$e_1e_2=e_3$, & $e_1e_4=e_5$, & $e_2e_3=e_5$, & $e_2e_4=e_6$, & $e_3e_5=e_6$; \\
	${\mathbb A}_{15}$ &:&  
		$e_1e_2=e_3$, & $e_1e_4=e_5$, & $e_2e_3=e_5$, & $e_3e_5=e_6$;\\
	${\mathbb A}_{16}$ &:&  
	$e_1e_2=e_3$, & $e_1e_3=e_6$, & $e_1e_5=e_6$, & $e_2e_3=e_6$, & $e_3e_4=e_5$; \\
	${\mathbb A}_{17}$ &:&  
	$e_1e_2=e_3$, & $e_1e_3=e_6$, & $e_1e_5=e_6$, & $e_2e_3=e_6$, & $e_3e_4=e_5$, & $e_4e_5=e_6$; \\
	${\mathbb A}_{18}$ &:&  
	$e_1e_2=e_3$, & $e_1e_3=e_6$, & $e_1e_5=e_6$, & $e_2e_4=e_6$, & $e_3e_4=e_5$; \\
	${\mathbb A}_{19}$ &:&  
	$e_1e_2=e_3$, & $e_1e_3=e_6$, & $e_1e_5=e_6$, & $e_3e_4=e_5$; \\
	${\mathbb A}_{20}$ &:&  
	$e_1e_2=e_3$, & $e_1e_3=e_6$, & $e_1e_5=e_6$, & $e_3e_4=e_5$, & $e_4e_5=e_6$; \\
	${\mathbb A}_{21}$ &:&  
	$e_1e_2=e_3$, & $e_1e_3=e_6$, & $e_3e_4=e_5$, & $e_4e_5=e_6$; \\
	${\mathbb A}_{22}$ &:&  
	$e_1e_2=e_3$, & $e_1e_4=e_6$, & $e_3e_4=e_5$, & $e_3e_5=e_6$; \\
	${\mathbb A}_{23}$ &:&  
	$e_1e_2=e_3$, & $e_1e_5=e_6$, & $e_2e_4=e_6$, & $e_3e_4=e_5$; \\
	${\mathbb A}_{24}$ &:&  
	$e_1e_2=e_3$, & $e_1e_5=e_6$, & $e_2e_4=e_6$, & $e_3e_4=e_5$, & $e_3e_5=e_6$; \\
	${\mathbb A}_{25}$ &:&  
	$e_1e_2=e_3$, & $e_1e_5=e_6$, & $e_3e_4=e_5$; \\
	${\mathbb A}_{26}$ &:&  
	$e_1e_2=e_3$, & $e_1e_5=e_6$, & $e_3e_4=e_5$, & $e_3e_5=e_6$; \\
	${\mathbb A}_{27}$ &:&  
	$e_1e_2=e_3$, & $e_1e_5=e_6$, & $e_3e_4=e_5$, & $e_4e_5=e_6$; \\
	${\mathbb A}_{28}$ &:&  
	$e_1e_2=e_3$, & $e_3e_4=e_5$, & $e_3e_5=e_6$; \\
	${\mathbb A}_{29}$ &:&  
	$e_1e_2=e_3$, & $e_3e_4=e_5$, & $e_4e_5=e_6$;\\
 ${\mathbb A}_{30}$ &:&  
 $e_1e_2=e_3$, & $e_1e_3=e_4$, & $e_1e_4=e_5$, & $e_1e_5 = e_6$, & $e_3e_4 = e_6$;\\
 ${\mathbb A}_{31}$ &:&  
 $e_1e_2=e_3$, & $e_1e_3=e_4$, & $e_1e_4=e_5$, & $e_2e_3 = e_6$, & $e_2e_5 = e_6$, & $e_3e_4 = e_6$;\\
 ${\mathbb A}_{32}(\af)$ &:&  
 $e_1e_2=e_3$, & $e_1e_3=e_4$, & $e_1e_4=e_5$, & $e_2e_3 = \af e_6$, & $e_2e_5 = e_6$, & $e_4e_5 = e_6$;\\
 ${\mathbb A}_{33}$ &:&  
 $e_1e_2=e_3$, & $e_1e_3=e_4$, & $e_1e_4=e_5$, & $e_2e_3 = e_6$, & $e_4e_5 = e_6$;\\
 ${\mathbb A}_{34}$ &:&  
 $e_1e_2=e_3$, & $e_1e_3=e_4$, & $e_1e_4=e_5$, & $e_2e_4 = e_6$, & $e_2e_5 = -e_6$, & $e_3e_4 = e_6$;\\
 ${\mathbb A}_{35}(\af)$ &:&  
 $e_1e_2=e_3$, & $e_1e_3=e_4$, & $e_1e_4=e_5$, & $e_2e_4 = \af e_6$, & $e_2e_5 = e_6$, & $e_3e_5 = e_6$;\\
 ${\mathbb A}_{36}$ &:&  
 $e_1e_2=e_3$, & $e_1e_3=e_4$, & $e_1e_4=e_5$, & $e_2e_4 = e_6$, & $e_3e_5 = e_6$;\\
 ${\mathbb A}_{37}(\af)$ &:&  
 $e_1e_2=e_3$, & $e_1e_3=e_4$, & $e_1e_4=e_5$, & $e_2e_5 = \af e_6$, & $e_3e_4 = e_6$;\\
 ${\mathbb A}_{38}$ &:&  
 $e_1e_2=e_3$, & $e_1e_3=e_4$, & $e_1e_4=e_5$, & $e_3e_5 = e_6$;\\
 ${\mathbb A}_{39}$ &:&  
 $e_1e_2=e_3$, & $e_1e_3=e_4$, & $e_1e_4=e_5$, & $e_4e_5 = e_6$;\\
${\mathbb A}_{40}$ &:&  
$e_1e_2=e_3$, & $e_1e_3=e_4$, & $e_1e_4=e_5$, & $e_1e_5=e_6$, & $e_2e_3=e_5$, & $e_2e_5=e_6$, & $e_3e_4=-e_6$;\\
${\mathbb A}_{41}$ &:&  
$e_1e_2=e_3$, & $e_1e_3=e_4$, & $e_1e_4=e_5$, & $e_1e_5=e_6$, & $e_2e_3=e_5$, & $e_3e_4=e_6$;\\
${\mathbb A}_{42}$ &:&  
$e_1e_2=e_3$, & $e_1e_3=e_4$, & $e_1e_4=e_5$, & $e_2e_3=e_5$, & $e_2e_4=e_6$, & $e_3e_5=e_6$;\\
${\mathbb A}_{43}(\af)$ &:&  
$e_1e_2=e_3$, & $e_1e_3=e_4$, & $e_1e_4=e_5$, & $e_2e_3=e_5$, & $e_2e_5=\af e_6$, & $e_3e_4=e_6$;\\
${\mathbb A}_{44}$ &:&  
$e_1e_2=e_3$, & $e_1e_3=e_4$, & $e_1e_4=e_5$, & $e_2e_3=e_5$, & $e_3e_5=e_6$;\\
${\mathbb A}_{45}$ &:&  
$e_1e_2=e_3$, & $e_1e_3=e_4$, & $e_1e_4=e_5$, & $e_2e_3=e_5$, & $e_4e_5=e_6$;\\
${\mathbb A}_{46}$ &:&  
$e_1e_2=e_3$, & $e_1e_3=e_4$, & $e_1e_4=e_6$, & $e_1e_5=e_6$, & $e_2e_3=e_6$, & $e_2e_4=e_5$;\\
${\mathbb A}_{47}(\af)$ &:&  
$e_1e_2=e_3$, & $e_1e_3=e_4$, & $e_1e_4=\af e_6$, & $e_1e_5=e_6$, & $e_2e_4=e_5$, & $e_2e_5=e_6$, & $e_3e_4=e_6$, & $e_3e_5=e_6$;\\
${\mathbb A}_{48}(\af)$ &:&  
$e_1e_2=e_3$, & $e_1e_3=e_4$, & $e_1e_4=\af e_6$, & $e_1e_5=e_6$, & $e_2e_4=e_5$, & $e_3e_4=e_6$, & $e_3e_5=e_6$;\\
${\mathbb A}_{49}$ &:&  
$e_1e_2=e_3$, & $e_1e_3=e_4$, & $e_1e_4=e_6$, & $e_1e_5=e_6$, & $e_2e_4=e_5$;\\
${\mathbb A}_{50}$ &:&  
$e_1e_2=e_3$, & $e_1e_3=e_4$, & $e_1e_4=e_6$, & $e_1e_5=e_6$, & $e_2e_4=e_5$, & $e_2e_5=e_6$;\\
${\mathbb A}_{51}$ &:&  
$e_1e_2=e_3$, & $e_1e_3=e_4$, & $e_1e_4=e_6$, & $e_2e_4=e_5$, & $e_2e_5=e_6$;\\
${\mathbb A}_{52}$ &:&  
$e_1e_2=e_3$, & $e_1e_3=e_4$, & $e_1e_4=e_6$, & $e_2e_4=e_5$, & $e_2e_5=e_6$, & $e_3e_5=e_6$;\\
${\mathbb A}_{53}$ &:&  
$e_1e_2=e_3$, & $e_1e_3=e_4$, & $e_1e_4=e_6$, & $e_2e_4=e_5$, & $e_3e_5=e_6$;\\
${\mathbb A}_{54}$ &:&  
$e_1e_2=e_3$, & $e_1e_3=e_4$, & $e_1e_5=e_6$, & $e_2e_3=e_6$, & $e_2e_4=e_5$;\\
${\mathbb A}_{55}$ &:&  
$e_1e_2=e_3$, & $e_1e_3=e_4$, & $e_1e_5=e_6$, & $e_2e_3=e_6$, & $e_2e_4=e_5$, & $e_4e_5=e_6$;\\
${\mathbb A}_{56}$ &:&  
$e_1e_2=e_3$, & $e_1e_3=e_4$, & $e_2e_3=e_6$, & $e_2e_4=e_5$, & $e_4e_5=e_6$;\\
${\mathbb A}_{57}(\af)$ &:&  
$e_1e_2=e_3$, & $e_1e_3=e_4$, & $e_1e_5=(\af+1)e_6$, & $e_2e_4=e_5$, & $e_3e_4=\af e_6$;\\
${\mathbb A}_{58}(\af)$ &:&  
$e_1e_2=e_3$, & $e_1e_3=e_4$, & $e_1e_5=(\af+1)e_6$, & $e_2e_4=e_5$, & $e_2e_5=e_6$, & $e_3e_4=\af e_6$;\\
${\mathbb A}_{59}$ &:&  
$e_1e_2=e_3$, & $e_1e_3=e_4$, & $e_1e_5=e_6$, & $e_2e_4=e_5$, & $e_2e_5=e_6$, & $e_3e_4=e_6$;\\
${\mathbb A}_{60}$ &:&  
$e_1e_2=e_3$, & $e_1e_3=e_4$, & $e_1e_5=e_6$, & $e_2e_4=e_5$, & $e_4e_5=e_6$;\\
${\mathbb A}_{61}$ &:&  
$e_1e_2=e_3$, & $e_1e_3=e_4$, & $e_2e_4=e_5$, & $e_2e_5=e_6$;\\
${\mathbb A}_{62}$ &:&  
$e_1e_2=e_3$, & $e_1e_3=e_4$, & $e_2e_4=e_5$, & $e_2e_5=e_6$, & $e_3e_5=e_6$;\\
${\mathbb A}_{63}$ &:&  
$e_1e_2=e_3$, & $e_1e_3=e_4$, & $e_2e_4=e_5$, & $e_3e_5=e_6$;\\
${\mathbb A}_{64}$ &:&  
$e_1e_2=e_3$, & $e_1e_3=e_4$, & $e_2e_4=e_5$, & $e_3e_5=e_6$, & $e_4e_5=e_6$;\\
${\mathbb A}_{65}$ &:&  
$e_1e_2=e_3$, & $e_1e_3=e_4$, & $e_2e_4=e_5$, & $e_4e_5=e_6$;\\

${\mathbb A}_{66}$ & : &  
$e_1e_2=e_3$, & $e_1e_3=e_4$,& $e_1e_4=e_6$, & $e_2e_3=e_6$, & $e_2e_5=e_6$, & $e_3e_4=e_5$;\\
${\mathbb A}_{67}$ & : &  
$e_1e_2=e_3$, & $e_1e_3=e_4$,& $e_1e_4=e_6$, & $e_2e_5=e_6$, & $e_3e_4=e_5$;\\
${\mathbb A}_{68}$ & : &  
$e_1e_2=e_3$, & $e_1e_3=e_4$,& $e_1e_4=e_6$, & $e_2e_5=e_6$, & $e_3e_4=e_5$, & $e_3e_5=e_6$;\\
${\mathbb A}_{69}$ & : &  
$e_1e_2=e_3$, & $e_1e_3=e_4$,& $e_1e_4=e_6$, & $e_3e_4=e_5$, & $e_3e_5=e_6$;\\
${\mathbb A}_{70}(\af)$ & : &  
$e_1e_2=e_3$, & $e_1e_3=e_4$, & $e_1e_5=e_6$, & $e_2e_3=\af e_6$, & $e_2e_4=e_6$, & $e_3e_4=e_5$;\\
${\mathbb A}_{71}$ & : &  
$e_1e_2=e_3$, & $e_1e_3=e_4$, & $e_1e_5=e_6$, & $e_2e_3= e_6$, & $e_3e_4=e_5$;\\
${\mathbb A}_{72}$ & : &  
$e_1e_2=e_3$, & $e_1e_3=e_4$, & $e_2e_3= e_6$, & $e_2e_5=e_6$, & $e_3e_4=e_5$;\\
${\mathbb A}_{73}$ & : &  
$e_1e_2=e_3$, & $e_1e_3=e_4$, & $e_2e_3= e_6$, & $e_2e_5=e_6$, & $e_3e_4=e_5$, & $e_4e_5=e_6$;\\
${\mathbb A}_{74}$ & : &  
$e_1e_2=e_3$, & $e_1e_3=e_4$, & $e_2e_3= e_6$, & $e_3e_4=e_5$, & $e_3e_5=e_6$, & $e_4e_5=e_6$;\\
${\mathbb A}_{75}$ & : &  
$e_1e_2=e_3$, & $e_1e_3=e_4$, & $e_2e_3= e_6$, & $e_3e_4=e_5$, & $e_4e_5=e_6$;\\
${\mathbb A}_{76}$ & : &  
$e_1e_2=e_3$, & $e_1e_3=e_4$, & $e_1e_5=e_6$, & $e_2e_4= e_6$, & $e_3e_4=e_5$, & $e_3e_5=e_6$;\\
${\mathbb A}_{77}$ & : &  
$e_1e_2=e_3$, & $e_1e_3=e_4$, & $e_2e_4= e_6$, & $e_3e_4=e_5$, & $e_3e_5=e_6$;\\
${\mathbb A}_{78}$ & : &  
$e_1e_2=e_3$, & $e_1e_3=e_4$, & $e_1e_5=e_6$, & $e_3e_4=e_5$;\\
${\mathbb A}_{79}$ & : &  
$e_1e_2=e_3$, & $e_1e_3=e_4$, & $e_1e_5=e_6$, & $e_3e_4=e_5$, & $e_3e_5=e_6$;\\
${\mathbb A}_{80}$ & : &  
$e_1e_2=e_3$, & $e_1e_3=e_4$, & $e_2e_5=e_6$, & $e_3e_4=e_5$;\\
${\mathbb A}_{81}$ & : &  
$e_1e_2=e_3$, & $e_1e_3=e_4$, & $e_2e_5=e_6$, & $e_3e_4=e_5$, & $e_3e_5=e_6$;\\
${\mathbb A}_{82}(\af)$ & : &  
$e_1e_2=e_3$, & $e_1e_3=e_4$, & $e_2e_5=\af e_6$, & $e_3e_4=e_5$, & $e_3e_5=e_6$, & $e_4e_5=e_6$;\\
${\mathbb A}_{83}$ & : &  
$e_1e_2=e_3$, & $e_1e_3=e_4$, & $e_2e_5=e_6$, & $e_3e_4=e_5$, & $e_4e_5=e_6$;\\
${\mathbb A}_{84}$ & : &  
$e_1e_2=e_3$, & $e_1e_3=e_4$, & $e_3e_4=e_5$, & $e_3e_5=e_6$;\\
${\mathbb A}_{85}$ & : &  
$e_1e_2=e_3$, & $e_1e_3=e_4$, & $e_3e_4=e_5$, & $e_4e_5=e_6$.
\end{longtable}

}

\

\ 

\


\begin{thebibliography}{99}

\bibitem{ack}
Abdelwahab H.,  Calder\'on A.J., Kaygorodov I.,
    The algebraic and geometric classification of nilpotent binary Lie algebras, 
     International Journal of Algebra and Computation, 29 (2019), 6, 1113--1129.
  
    
\bibitem{omirov}
Adashev J.,  Camacho L.,  Omirov B.,
    Central extensions of null-filiform and naturally graded filiform non-Lie Leibniz algebras,
    Journal of Algebra, 479 (2017), 461--486.

\bibitem{ale}
Alvarez M.A., 
    On rigid $2$-step nilpotent Lie algebras, 
    Algebra Colloquium,  25  (2018), 2, 349--360.


\bibitem{ale2}
Alvarez M.A., 
    The variety of $7$-dimensional $2$-step nilpotent Lie algebras, 
    Symmetry,  10 (2018), 1, 26.




\bibitem{aleis}
Alvarez M.A., Hern\'{a}ndez I., 
    On degenerations of Lie superalgebras, 
    Linear and Multilinear Algebra, 68 (2020), 1, 29--44. 



\bibitem{maria}
Alvarez M.A., Hern\'{a}ndez I., Kaygorodov I.,
    Degenerations of Jordan superalgebras,
    Bulletin of the Malaysian Mathematical Sciences Society, 42 (2019), 6, 3289--3301.








\bibitem{bb09}
Beneš T.,  Burde D., 
    Degenerations of pre-Lie algebras, 
    Journal of Mathematical Physics, 50 (2009), 11, 112102, 9 pp.


\bibitem{bb14}
Beneš T., Burde D.,
    Classification of orbit closures in the variety of three-dimensional Novikov algebras,
    Journal of Algebra and its Applications, 13 (2014),  2, 1350081, 33 pp.

\bibitem{BC99} 
Burde D., Steinhoff C.,
    Classification of orbit closures of $4$--dimensional complex Lie algebras,
    Journal of Algebra, 214 (1999), 2, 729--739.

\bibitem{cfk182}
Calderón Martín A., Fern\'andez Ouaridi A., Kaygorodov I.,
    The classification of $n$-dimensional anticommutative algebras with $(n-3)$-dimensional annihilator, 
    Communications in Algebra, 47 (2019), 1, 173--181.



\bibitem{cfk19}
Calderón Martín A.,  Fern\'andez Ouaridi A., Kaygorodov I.,
    The classification of $2$-dimensional rigid algebras,
    Linear and Multilinear Algebra, 2018,  DOI:10.1080/03081087.2018.1519009.

\bibitem{cfk18}
Calderón Martín A., Fern\'andez Ouaridi A., Kaygorodov I.,
    The classification of bilinear maps with   radical of codimension $2$, 
    arXiv:1806.07009.







\bibitem{degr3}
Cicalò S., De Graaf W.,   Schneider C.,
    Six-dimensional nilpotent Lie algebras,
    Linear Algebra and its Applications, 436 (2012), 1, 163--189.



\bibitem{usefi1}
Darijani I., Usefi H.,
    The classification of 5-dimensional $p$-nilpotent restricted Lie algebras over perfect fields. I,
    Journal of Algebra, 464 (2016), 97--140.


\bibitem{degr2}
De Graaf W., 
    Classification of 6-dimensional nilpotent Lie algebras over fields of characteristic not $2$, 
    Journal of Algebra, 309  (2007), 2, 640--653.

\bibitem{degr1}
De Graaf W., 
    Classification of nilpotent associative algebras of small dimension,
    International Journal of Algebra and Computation, 28 (2018),  1, 133--161.

\bibitem{fkkv}
Fern\'andez Ouaridi A.,  Kaygorodov I.,  Khrypchenko M., Volkov Yu., 
    Degenerations of nilpotent algebras,
    arXiv:1905.05361

\bibitem{fF68}
Flanigan F.\ J., 
    Algebraic geography: {V}arieties of structure constants, 
    Pacific Journal of Mathematics, 27 (1968), 71--79.
    
    
\bibitem{mG64}
Gerstenhaber M.,
    On the deformation of rings and algebras,
    Annals of Mathematics (2), 79 (1964), 59--103.
	

\bibitem{gorb93} 
Gorbatsevich V., 
    Anticommutative finite-dimensional algebras of the first three levels of complexity, 
    St. Petersburg Mathematical Journal, 5 (1994), 505--521.


\bibitem{gorb98}
Gorbatsevich V., 
    On the level of some solvable Lie algebras,
    Siberian Mathematical Journal, 39 (1998), 5, 872--883.


\bibitem{gkk}
Gorshkov I., Kaygorodov I., Khrypchenko M.,
    The algebraic classification of nilpotent Tortkara algebras,
    arXiv:1904.00845.

\bibitem{gkk19}
Gorshkov I., Kaygorodov I., Khrypchenko M.,
    The geometric classification of nilpotent Tortkara algebras,
    Communications in Algebra, 48 (2020), 1, 204--209.

    

\bibitem{gkks}
Gorshkov I., Kaygorodov I., Kytmanov A., Salim M.,
    The variety of nilpotent Tortkara algebras,
    Journal of Siberian Federal University. Mathematics \& Physics, 12 (2019), 2, 173--184.

\bibitem{gkp}
 Gorshkov I.,  Kaygorodov I.,  Popov Yu., 
    Degenerations of Jordan algebras and Marginal algebras, 
    Algebra Colloquium, 2019, to appear

\bibitem{GAB92}
Goze M., Ancoch\'{e}a-Berm\'{u}dez J.\ M.,
    On the varieties of nilpotent {L}ie algebras of dimension {$7$} and {$8$},
    Journal of Pure and Applied Algebra, 77 (1992), 131--140.



\bibitem{GRH}
Grunewald F.,  O'Halloran J.,
    Varieties of nilpotent Lie algebras of dimension less than six,
    Journal of Algebra, 112 (1988), 2, 315--325.

\bibitem{GRH2}
Grunewald F., O'Halloran J.,
    A Characterization of orbit closure and applications,
    Journal of Algebra, 116 (1988), 1, 163--175.


\bibitem{ha16}
Hegazi A., Abdelwahab H.,
    Classification of five-dimensional nilpotent Jordan algebras,
    Linear Algebra and its Applications, 494 (2016), 165--218.



\bibitem{ha17}
Hegazi A., Abdelwahab H.,
    The classification of $n$-dimensional non-associative Jordan algebras with $(n-3)$-dimensional annihilator,
    Communications in Algebra, 46 (2018), 2, 629--643.

\bibitem{hac16}
Hegazi A., Abdelwahab H., Calderón Martín A.,
    The classification of $n$-dimensional non-Lie Malcev algebras with $(n-4)$-dimensional annihilator, 
    Linear Algebra and its Applications, 505 (2016), 32--56.

\bibitem{hac18}
Hegazi A., Abdelwahab H.,  Calderón Martín A.,
    Classification of nilpotent Malcev algebras of small dimensions over arbitrary fields of characteristic not $2$,
    Algebras and  Represention Theory, 21 (2018), 1, 19--45.
    
\bibitem{ikm19}
Ismailov N.,  Kaygorodov I.,  Mashurov F.,
    The algebraic and geometric classification of nilpotent assosymmetric algebras,
    Algebras and Representation Theory, 2020, DOI: 10.1007/s10468-019-09935-y.

\bibitem{ikv17}
Ismailov N., Kaygorodov I.,  Volkov Yu.,
    The geometric classification of Leibniz algebras,
    International Journal of Mathematics, 29  (2018), 5, 1850035.



\bibitem{ikv18}
Ismailov N., Kaygorodov I.,  Volkov Yu.,
    Degenerations of Leibniz and anticommutative algebras,
     Canadian Mathematical Bulletin, 62 (2019),  3, 539--549.



\bibitem{kkk18}
Karimjanov I., Kaygorodov I., Khudoyberdiyev A.,
    The algebraic and geometric classification of nilpotent Novikov  algebras, 
    Journal of Geometry and Physics, 143 (2019), 11--21.

    
\bibitem{kkl18}
Karimjanov I., Kaygorodov I.,	Ladra M.,
    Central extensions of filiform associative algebras, 
    Linear and Multilinear Algebra, 2019, DOI: 10.1080/03081087.2019.1620674





\bibitem{klpp}
Kaygorodov I., Lopes S.,  Popov Yu., 
    Degenerations of nilpotent  associative commutative  algebras, 
    Communications in Algebra, 2019, DOI: 10.1080/00927872.2019.1691581
    
    

\bibitem{kpv19}
Kaygorodov I., P\'{a}ez-Guill\'{a}n  P., Voronin V.,  
    The algebraic and geometric classification of nilpotent bicommutative algebras,
     Algebras and Representation Theory, 2020, 
     DOI: 10.1007/s10468-019-09944-x



\bibitem{kppv}
Kaygorodov I.,  Popov Yu., Pozhidaev A., Volkov Yu.,
    Degenerations of Zinbiel and nilpotent Leibniz algebras,
    Linear and Multilinear Algebra,   66 (2018), 4, 704--716.


\bibitem{kpv}
Kaygorodov I., Popov Yu., Volkov Yu.,
    Degenerations of binary-Lie and nilpotent Malcev algebras,
    Communications in Algebra, 46 (2018), 11, 4929--4941.


\bibitem{kv16}
Kaygorodov I.,   Volkov Yu.,
    The variety of $2$-dimensional algebras over an algebraically closed field,
    Canadian  Journal of Mathematics,  71 (2019),  4, 819--842.



\bibitem{kv17}
Kaygorodov I., Volkov Yu., 
    Complete classification of algebras of level two,  
    Moscow Mathematical Journal, 19 (2019), 3,  485--521.


\bibitem{khud15}
Khudoyberdiyev A., 
    The classification of algebras of level two, 
    Journal of Geometry and Physics, 98 (2015), 13--20.


\bibitem{khud13}
Khudoyberdiyev A., Omirov B., 
    The classification of algebras of level one, 
    Linear Algebra and its Applications, 439 (2013), 11, 3460--3463.




\bibitem{maz79}
Mazzola G.,
    The algebraic and geometric classification of associative algebras of dimension five, 
    Manuscripta Mathematica, 27 (1979), 1, 81--101. 
    
\bibitem{maz80}
Mazzola G.,
    Generic finite schemes and Hochschild cocycles, 
    Commentarii Mathematici Helvetici, 55 (1980), 2, 267--293.

 


\bibitem{S90}
Seeley C., 
    Degenerations of $6$-dimensional nilpotent Lie algebras over $\mathbb{C}$, 
    Communications in Algebra,  18 (1990), 3493--3505.


\bibitem{ss78}
Skjelbred T., Sund T.,
    Sur la classification des algebres de Lie nilpotentes,
    C. R. Acad. Sci. Paris Ser. A-B, 286 (1978), 5,  A241--A242.



\bibitem{zusmanovich}
Zusmanovich P., 
    Central extensions of current algebras,
    Transactions of the American Mathematical Society, 334 (1992),  1, 143--152.





\end{thebibliography}
\end{document}